\pdfoutput=1
\documentclass[11pt,a4paper]{article}
\usepackage[utf8]{inputenc}
\usepackage[a4paper, total={6.6in, 9.6in}]{geometry}

\usepackage[T1]{fontenc}
\usepackage[variablett]{lmodern}
\usepackage{amsmath}
\usepackage{amsfonts}
\usepackage{amssymb}
\usepackage{float}
\usepackage{enumitem}
\usepackage{mathtools}
\usepackage{bbm}

\usepackage{graphicx}
\usepackage{quiver}

\tikzset{every loop/.style={looseness=6}}

\usepackage[bottom]{footmisc}

\usepackage[unicode,naturalnames,hidelinks,hypertexnames=false]{hyperref}
\definecolor{mycolor}{HTML}{750000}
\hypersetup{
    colorlinks = true,
    allcolors = mycolor,
    bookmarksdepth = 2,
}
\pdfstringdefDisableCommands{%
  \let\HyPsd@CatcodeWarning\@gobble
}

\usepackage[nameinlink]{cleveref}
\Crefname{subsection}{Subsection}{Subsections}
\Crefname{subsubsection}{Paragraph}{Paragraphs}

\newlist{enumalpha}{enumerate}{1}
\setlist[enumalpha, 1]{label=(\alph*)}
\newlist{enumroman}{enumerate}{1}
\setlist[enumroman, 1]{label=(\roman*)}

\newcommand{\leftl}{\mathopen{}\mathclose\bgroup\left}
\newcommand{\rightr}{\aftergroup\egroup\right}

\usepackage{comment}

\newcommand{\GL}{\mathrm {GL}}
\newcommand{\F}{\mathbb F}
\newcommand{\im}{\mathrm {im} \,}
\renewcommand{\P}{\mathbb P}
\newcommand{\ZZ}{\mathbb Z}
\newcommand{\N}{\mathbb N}
\newcommand{\A}{\mathfrak X}
\newcommand{\Y}{\mathfrak Y}
\newcommand{\Z}{\mathfrak Z}
\newcommand{\T}{\mathfrak T}
\newcommand{\D}{\mathfrak D}
\newcommand{\E}{\mathfrak E}

\newcommand{\Span}{\mathrm {Span}}

\DeclareMathOperator{\Cent}{Cent}
\DeclareMathOperator{\Cl}{Cl}

\newcommand{\Qu}{\mathcal Q}
\newcommand{\Sh}{\mathcal S}
\newcommand{\Oct}{\mathcal O}

\newcommand{\bwp}{\boldsymbol{\wp}}
\newcommand{\Aut}{\mathrm{Aut}}
\newcommand{\Mat}{\mathfrak M}

\newcommand{\Sym}{\mathfrak S}
\newcommand{\Bal}{\mathrm{Bal}}
\newcommand{\JS}{\mathrm{JS}}
\newcommand{\card}[1]{ \left | #1 \right | }
\newcommand{\Isom}{\mathrm{Isom}}
\newcommand{\Hom}{\mathrm{Hom}}
\DeclareMathOperator{\Gr}{Gr}

\DeclareMathOperator{\NP}{NP}

\newcommand{\diag}{\textnormal {diag}}
\newcommand{\eig}{{\textnormal {eig.}/\F_p}}

\renewcommand{\bar}{\overline}

\renewcommand{\tilde}{\widetilde}

\newcommand{\pbinom}[2]{\binom{#1}{#2}_{\!p}}

\newcommand{\customref}[2]{\hyperref[#2]{#1}}
\newcommand\iref[2]{\customref{\Cref*{#1}~\ref*{#2}}{#1}}
\newcommand\iiref[3]{\customref{\Cref*{#1}~\ref*{#2}\ref*{#3}}{#1}}

\usepackage{amsthm}
\usepackage{thmtools}

\newcounter{mycounter}[section]

\theoremstyle{plain}
\newtheorem{theorem}[mycounter]{Theorem}
\newtheorem{corollary}[mycounter]{Corollary}
\newtheorem{proposition}[mycounter]{Proposition}
\newtheorem{lemma}[mycounter]{Lemma}

\theoremstyle{remark}
\newtheorem{remark}[mycounter]{Remark}

\theoremstyle{definition}
\newtheorem{definition}[mycounter]{Definition}

\counterwithin{equation}{section}

\usepackage{titletoc}
\setcounter{tocdepth}{1} 
\titlecontents{section}
[0pt]                                               
{}%
{\contentsmargin{0pt}                               
    \large\thecontentslabel.\enspace
    }
{\contentsmargin{0pt}\large}                        
{\titlerule*[.5pc]{ }\contentspage}                 
[]                                                  

\usepackage{titlesec}
\titleformat{\section}[block]{\normalfont\centering\scshape\large}{\thesection.}{1em}{}
\titleformat{\subsection}[block]{\normalfont\large}{\thesubsection.}{1em}{\bf}
\titleformat{\subsubsection}[runin]{\normalfont}{\bf\thesubsubsection.}{0.3em}{\bf}

\makeatletter
\patchcmd{\@maketitle}{\LARGE}{\huge}{\typeout{OK 1}}{\typeout{Failed 1}}
\patchcmd{\@maketitle}{\large \lineskip}{\Large \lineskip}{\typeout{OK 2}}{\typeout{Failed 2}}
\makeatother

\title{On matrices commuting with their Frobenius}

\author{
	Fabian Gundlach$^\ast$
	\and%
	Béranger Seguin%
	\footnote{
		Universität Paderborn, Fakultät EIM, Institut für Mathematik, Warburger Str. 100, 33098 Paderborn, Germany.
		Emails:
		\texttt{fabian.gundlach@uni-paderborn.de},
		\texttt{math@beranger-seguin.fr}.
	}
}
\date{}

\renewenvironment{abstract}{%
\par\noindent\rule{\textwidth}{1pt}
\par\noindent\textsc{Abstract.}}
{\par\noindent\rule{\textwidth}{1pt}}

\begin{document}

\maketitle{}

\begin{abstract}
	The \emph{Frobenius} of a matrix~$M$ with coefficients in $\bar\F_p$ is the matrix $\sigma(M)$ obtained by raising each coefficient to the $p$-th power.
	We consider the question of counting matrices with coefficients in~$\F_q$ which commute with their Frobenius, asymptotically when~$q$ is a large power of~$p$.
	We give answers for matrices of size~$2$, for diagonalizable matrices, and for matrices whose eigenspaces are defined over~$\F_p$.
	Moreover, we explain what is needed to solve the case of general matrices.
	We also solve (for both diagonalizable and general matrices) the corresponding problem when one counts matrices $M$ commuting with all the matrices $\sigma(M)$, $\sigma^2(M)$, $\ldots$ in their Frobenius orbit.

	\bigskip

	\par\noindent%
	\textbf{MSC 2020: }
	14G17 $\cdot$ 
	15A27 $\cdot$ 
	14M15  
\end{abstract}

{
  \hypersetup{linkcolor=black}
  \tableofcontents{}
}
\par\noindent\rule{\textwidth}{1pt}

\section{Introduction}

Throughout the paper, we fix a prime power $p$ and an integer $n\geq2$.
For any perfect field $K$, we denote by~$\Mat_n(K)$ the ring of $n\times n$-matrices with coefficients in $K$ and by~$\Mat_n^\diag(K)$ the subset of matrices that are diagonalizable over the algebraic closure $\bar K$ (equivalently, these are the matrices which are semi-simple).
We denote by $\sigma$ the Frobenius automorphism of the $\F_p$-algebra $\Mat_n(\bar\F_p)$ acting entrywise by $x \mapsto x^p$.
The symbol $q$ always denotes a power of~$p$.

\subsection{Context}

A classical family of problems is the enumeration of matrices over finite fields satisfying a given property.
For instance, it is shown in~\cite{finenilp,gerstenilp} that there are $q^{n(n-1)}$ nilpotent matrices in $\Mat_n(\F_q)$.
Other examples include \cite{symnilp} (for symmetric nilpotent matrices), \cite{charpol} (for the distribution of characteristic polynomials), \cite{feitfine} (for pairs of commuting matrices), \cite{mutanni} (for mutually annihilating pairs of matrices), etc.

Another classical problem is the study of matrices with coefficients in a differential ring which commute with their coefficientwise derivative, a problem introduced in a letter of Schur in 1934 and which has since been actively studied (see \cite{surveycommdiff} for a survey, and \cite{eigcommdiff} for a characterization of matrices commuting with their derivatives of all orders).

In this paper, we investigate an analogous problem: the enumeration of matrices $A \in \Mat_n(\F_q)$ which commute with their Frobenius $\sigma(A)$,%
\footnote{
	Equivalently, this means that $A$ commutes with $(\sigma-\mathrm{id})A$, making the connection with matrices commuting with their derivative more explicit as $(\sigma-\mathrm{id})$ is a $\sigma$-derivation in the sense that $(\sigma-\mathrm{id})(AB) = (\sigma-\mathrm{id})(A) \, B + \sigma(A)\, (\sigma-\mathrm{id})(B)$.
	The ``constants'' are the matrices with coefficients in $\F_p$.
}
or with their entire Frobenius orbit $\sigma(A), \sigma^2(A), \ldots$
More precisely, we obtain estimates for the number of such matrices.

\subsection{Main results}

For any perfect field~$K$ containing~$\F_p$, define the following subsets of $\Mat_n(K)$:
\begin{align*}
	\A(K)
	& :=
	\Bigl\{
		M\in\Mat_n(K)
		\;\Big\vert\;
		M \textnormal{ and } \sigma(M) \textnormal{ commute}
	\Bigr\},
	\\
	\A^\diag(K)
	& :=
	\A(K) \cap \Mat^\diag_n(K),
	\\
	\A_\infty(K)
	& :=
	\Bigl\{
		M\in\Mat_n(K)
		\;\Big\vert\;
		M, \sigma(M), \sigma^2(M), \ldots \textnormal{ commute pairwise}
	\Bigr\},
	\\
	\A_\infty^\diag(K)
	& :=
	\A_\infty(K) \cap \Mat^\diag_n(K).
\end{align*}
The natural inclusions between these sets are summarized as follows: both $\A^\diag(K)$ and $\A_\infty(K)$ are contained in $\A(K)$ and contain $\A^\diag_\infty(K)$.
When $K$ is omitted from the notation, it is implied that $K = \bar\F_p$, so $\A = \A(\bar\F_p) \subseteq \Mat_n(\bar\F_p)$, etc.
Note that $\A(\F_q) = \A \cap \Mat_n(\F_q)$, etc.

In this paper, we estimate the asymptotic sizes of these sets in the case $K = \F_q$ as $q\to\infty$ ($p$ and $n$ are fixed, and $q$ is a power of~$p$).
Our main result is the following theorem:

\begin{theorem}
	\label{thm:main}
	Letting $\F_q$ be any finite field containing $\F_p$, we have the following estimates, where the implied constants in the $O$-estimates are all independent of $q$:
	\[\arraycolsep=0.5pt\def\arraystretch{1.5}
	\begin{array}{cccclcl}
		\bigl|
			\A^\diag(\F_q)
		\bigr|
		& {}={} &
		c^\diag(p,n)
		& {}\cdot{} &
		q^{\lfloor n^2/3\rfloor + 1}
		& {}+{} &
		O_{p,n}\Bigl(
			q^{\lfloor n^2/3\rfloor + 1/2}
		\Bigr),
		\\
		|\A_\infty^\diag(\F_q)|
		& {}={} &
		p^{n^2-n}
		& {}\cdot{} &
		q^n
		& {}+{} &
		O_{p,n}(q^{n-1}),
		\\
		|\A_\infty(\F_q)|
		& {}={} &
		c_\infty(p,n)
		& {}\cdot{} &
		q^{\lfloor n^2/4 \rfloor+1}
		& {}+{} &
		O_{p,n}\leftl(
			q^{\lfloor n^2/4 \rfloor}
		\rightr),
	\end{array}\]
	where
	\[
		c^\diag(p,n)
		=
		\begin{cases}
			p^2 & \textnormal{if }n=2 \\
			2 & \textnormal{if }n=4 \\
			1 & \textnormal{if }n\notin\{2,4\}
		\end{cases}
	\qquad
	\textnormal{and}
	\qquad
		c_\infty(p,n)
		=
		\begin{cases}
			p^2+p+1 & \textnormal{if }n=2 \\
			\arraycolsep=0pt
			\begin{array}{r}
				p^6 + p^5 + 3 p^4 + 3 p^3 \\
				+ 3 p^2 + p + 1
			\end{array}
			& \textnormal{if } n=3 \\
			\pbinom{n}{n/2} &
			\textnormal{if $n\geq4$ is even} \\
			2\pbinom{n}{\lfloor n/2 \rfloor} &
			\textnormal{if $n\geq5$ is odd.}
		\end{cases}
	\]
	(The Gaussian binomial coefficient $\pbinom{n}{k}$ is the number of $k$-dimensional subspaces of~$\F_p^n$.)
\end{theorem}

The estimates for
$\bigl|
	\A^\diag(\F_q)
\bigr|$
are proved in \Cref{thm:1-diag}, and they are certainly the most technical and interesting part of this article.
The estimates for $\bigl|\A_\infty^\diag(\F_q)\bigr|$ are proved in \Cref{thm:infty-diag}.
The estimates for $\bigl|\A_\infty(\F_q)\bigr|$ are proved in \Cref{thm:infty-all} (for $n \geq 3$) and \Cref{cor:count-2-by-2} (for $n=2$).

A consequence of \Cref{thm:main} is that, for any $n \geq 3$ and for any sufficiently large $q$, since $\lfloor n^2/3\rfloor > \lfloor n^2/4 \rfloor$, there are significantly more matrices $M$ commuting with $\sigma(M)$ (even among diagonalizable matrices) than there are matrices commuting with the entire orbit $\sigma(M),\sigma^2(M),\dots$

We do not obtain general estimates for $\bigl|\A(\F_q)\bigr|$.
However, we relate the exponent of $q$ in the asymptotics of $|\A(\F_q)|$ (as $q\to\infty$) to the dimensions of intersections $\Cent M \cap \Cl M$, where~$\Cent M$ and $\Cl M$ respectively denote the centralizer and conjugacy class of a matrix $M\in\Mat_n(\bar\F_p)$.
More precisely, define for any $M\in\Mat_n(\bar\F_p)$ the integer
\begin{equation}
	\label{eq:def-dM}
	d(M)
	:=
	(\textnormal{number of distinct eigenvalues of~$M$})
	+
	\dim\bigl({
		\Cent M \cap \Cl M
	}\bigr).
\end{equation}
We prove a general statement (\Cref{prop:counting-by-shape}), which implies the following:
\begin{theorem}
	\label{thm:1-all}
	For any finite field $\F_q \supseteq \F_p$, we have $|\A(\F_q)| = |\A^\diag(\F_q)| + O_{p,n}(q^{a_{p,n}})$, where~$a_{p,n}$ is the maximum value of~$d(M)$ over non-diagonalizable matrices $M\in\Mat_n(\bar\F_p) \setminus \Mat_n^\diag(\bar\F_p)$.
\end{theorem}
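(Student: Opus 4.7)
The plan is to deduce \Cref{thm:1-all} directly from the general counting-by-shape statement \Cref{prop:counting-by-shape}. The first step is to partition the matrices in $\A\cap\Mat_n(\F_q)$ according to their shape: roughly, the Jordan type (a tuple of partitions of the multiplicities of the distinct eigenvalues) refined, if necessary, by the Frobenius-orbit structure on the eigenvalues. The number of such shapes is bounded by a function of~$n$ alone, and for each shape $\mathcal{S}$ every matrix $M$ of shape $\mathcal{S}$ has the same value $d_\mathcal{S}=d(M)$, because both the number of distinct eigenvalues and $\dim(\Cent M\cap\Cl M)$ depend only on the Jordan type, which is constant along a stratum.

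The second step is to apply \Cref{prop:counting-by-shape} to bound the number of matrices of shape $\mathcal{S}$ lying in $\A\cap\Mat_n(\F_q)$ by $O_{p,n}(q^{d_\mathcal{S}})$. The reason the exponent is exactly $d(M)$ is that matrices of shape $\mathcal{S}$ in $\A$ are essentially parameterised by: a choice of eigenvalue multiset compatible with the Frobenius structure, which contributes $\sim q^{r}$ where $r$ is the number of distinct eigenvalues; together with a choice of $\sigma(M)$ inside the intersection $\Cent M\cap\Cl M$, which contributes $\sim q^{\dim(\Cent M\cap\Cl M)}$. The remaining freedom (a conjugation representative modulo the stabiliser) is absorbed into lower-order terms.

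The third step is the splitting. The diagonalisable shapes (those in which every Jordan block has size~$1$) together contribute, by definition, exactly $|\A^\diag\cap\Mat_n(\F_q)|$. For every non-diagonalisable shape $\mathcal{S}$ one has $d_\mathcal{S}\leq a_{p,n}$ by the definition of $a_{p,n}$, so the total contribution of the (finitely many) non-diagonalisable shapes is at most a finite sum of terms of the form $O_{p,n}(q^{d_\mathcal{S}})$, all of which are absorbed into a single $O_{p,n}(q^{a_{p,n}})$ term. This yields the claimed estimate.

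The hard part of this argument is entirely located in \Cref{prop:counting-by-shape}: one must choose the right notion of shape so that each stratum is stable under $\sigma$ and well-behaved under $\GL_n$-conjugation, and then verify, shape by shape, that the per-shape count of elements of $\A\cap\Mat_n(\F_q)$ is indeed governed by the exponent $d(M)$ via the above parameterisation. Once that proposition is in place, the deduction of \Cref{thm:1-all} is the finite-sum bookkeeping sketched above and requires no further ingredient.
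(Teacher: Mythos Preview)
Your proposal is correct and follows essentially the same route as the paper: partition $\A$ by Jordan shape, apply \Cref{prop:counting-by-shape} to each non-diagonalizable shape to obtain an $O_{p,n}(q^{d_\Sh})$ bound with $d_\Sh\leq a_{p,n}$, and sum over the finitely many such shapes. The paper's proof is literally this one-sentence deduction. One small remark: the refinement ``by the Frobenius-orbit structure on the eigenvalues'' is unnecessary --- the paper's Jordan shapes are purely combinatorial (they record only the multiset of block-size multiplicities per eigenvalue, not the eigenvalues themselves), and this already suffices because both $|V_M|$ and $\dim(\Cent M\cap\Cl M)$ depend only on this combinatorial data.
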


We are unable to compute $d(M)$ in general.
This is related to the hard problem of classifying pairs of commuting matrices up to simultaneous conjugation.
In \Cref{subsn:eigenspaces-def-over-fp}, we deal with a special case where that problem becomes tractable, in order to illustrate how the principle behind \Cref{prop:counting-by-shape} may be applied.
Specifically, we prove the following theorem about the sets $\A^\eig(\F_q)$ of matrices $M \in \A(\F_q)$ whose eigenspaces are all defined over $\F_p$:

\begin{theorem}[cf.\ \Cref{thm:eig}]
	\label{thm:eig-intro}
	For any finite field $\F_q\supseteq\F_p$, we have
	\[
		\bigl|
			\A^\eig(\F_q)
		\bigr|
		=
		c^\eig(p,n)\cdot q^{\lfloor n^2/4\rfloor+1}
		+ O_{p,n}(q^{\lfloor n^2/4\rfloor}),
	\]
	for specific constants $c^\eig(p,n)$, given in \Cref{thm:eig}.
\end{theorem}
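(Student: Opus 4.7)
The plan is to combine a structural description of $\A^\eig$ with the general counting-by-shape principle established in \Cref{prop:counting-by-shape}. The first step is structural: I would show that if $M \in \A$ and every strict eigenspace of $M$ is defined over $\F_p$, then the full generalized eigenspace decomposition of $M$ is also $\F_p$-rational. The key point is that on each strict eigenspace $V_i$ (spanned by $\F_p$-rational eigenvectors), $\sigma(M)$ automatically acts as multiplication by $\sigma(\lambda_i)$; since $M \in \A$ forces $\sigma(M)$ to preserve the generalized eigenspace $U_i$ of $\lambda_i$, the spectrum of $\sigma(M)$ restricted to $U_i$ must reduce to $\{\sigma(\lambda_i)\}$, so that $U_i$ coincides with the generalized $\sigma(\lambda_i)$-eigenspace of $\sigma(M)$, i.e., $\sigma(U_i) = U_i$. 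Consequently, each $M \in \A^\eig$ is canonically encoded by (i) an $\F_p$-rational direct sum decomposition $\F_p^n = \bigoplus_{i=1}^k W_i$, (ii) distinct eigenvalues $(\lambda_i) \in \F_q^k$, and (iii) nilpotent operators $N_i$ on $W_i \otimes \F_q$ with $\ker N_i$ defined over $\F_p$ and commuting with $\sigma(N_i)$.

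After this reduction, the count factors as a sum over ordered partitions $(n_1, \ldots, n_k)$ of $n$:
\[
	|\A^\eig \cap \Mat_n(\F_q)|
	=
	\sum_{(n_i)} B_{(n_i)}(p) \cdot \bigl|\{(\lambda_i) \in \F_q^k \text{ distinct}\}\bigr| \cdot \prod_i \mathrm{nil}^\eig(n_i, q),
\]
where $B_{(n_i)}(p)$ is a combinatorial constant (built from Gaussian binomials) counting ordered $\F_p$-rational decompositions of $\F_p^n$ with prescribed block sizes and $\mathrm{nil}^\eig(m, q)$ denotes the number of nilpotent $N \in \Mat_m(\F_q)$ with $\ker N$ defined over $\F_p$ and $N \sigma(N) = \sigma(N) N$. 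I would then analyze $\mathrm{nil}^\eig(m, q)$ by a secondary application of \Cref{prop:counting-by-shape}, stratifying by the Jordan type of $N$. The simplification available in the $\A^\eig$ setting is that fixing $\ker N$ to a particular $\F_p$-rational subspace allows us to work in an $\F_p$-adapted basis where the basis-change matrix lies in $\GL_m(\F_p)$ and commutes with $\sigma$; this reduces the commutation relation $N \sigma(N) = \sigma(N) N$ to an explicit system for the matrix entries, which can be handled via the classification of pairs of commuting nilpotent matrices.

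Finally, I would identify the ordered partitions $(n_1, \ldots, n_k)$ and Jordan types $(\mu_i)$ maximizing the exponent $k + \sum_i e(n_i, \mu_i)$, where $e(m, \mu)$ denotes the exponent of $q$ in the contribution of Jordan type $\mu$ to $\mathrm{nil}^\eig(m, q)$. The maximum is expected to equal $\lfloor n^2/4 \rfloor + 1$, consistent with the stated asymptotics; summing the leading contributions from all shapes attaining this maximum, weighted by the appropriate $B_{(n_i)}(p)$ and the leading constants from $\mathrm{nil}^\eig$, yields $c^\eig(p, n)$.

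The main obstacle will be the last step. Several combinations of partitions and Jordan types will typically contribute to the top exponent simultaneously, each carrying its own combinatorial weight involving Gaussian binomials, products of leading coefficients, and symmetry factors arising from equal-block cases. Correctly tallying all these contributions, verifying independence, and in particular avoiding over- or under-counting when two blocks share the same dimension (so that the relevant permutation symmetry must be factored out), will be the technical heart of the proof.
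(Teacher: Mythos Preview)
Your overall architecture matches the paper's: establish that generalized eigenspaces are $\F_p$-rational, stratify $\A^\eig$ by Jordan shape, compute the dimension of each stratum, find the maximizing shapes, and sum the leading contributions. Your structural argument for the $\F_p$-rationality of the full generalized eigenspaces $U_i$ is essentially correct (though terse). Note, however, that the paper actually proves more: \emph{all} intermediate kernels $\ker(M-\lambda_i I_n)^k$ are $\F_p$-rational (\Cref{cor:gen-eig-def}, via \Cref{lem:commute-incl-kernels}). This is used later to stratify by the full $\F_p$-rational flag, and your argument as written does not give it.

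There is a more substantial gap in your computational step. You propose to handle $\mathrm{nil}^\eig(m,q)$ by invoking ``the classification of pairs of commuting nilpotent matrices.'' But that classification is precisely the wild problem the paper cannot solve in general --- it is why \Cref{thm:1-all} is stated conditionally on the unknown quantity $d(M)$. The whole point of the $\A^\eig$ case is that it \emph{bypasses} this classification: once $\ker N$ is $\F_p$-rational, one has $\ker N = \ker\sigma(N)$, and \Cref{lem:commute-incl-kernels} forces $\ker N^k = \ker\sigma(N)^k$ for all $k$ (so $N$ and $\sigma(N)$ are automatically conjugate). The relevant variety is then $\E_A = \{B\in\Cent A\mid \ker B = \ker A\}$, whose dimension the paper computes directly as $\sum_{k\geq1} e_A(k)\,e_A(k+1)$ via a short induction (\Cref{lem:dim-cent-sameker}). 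This explicit formula is what makes the optimization over shapes (\Cref{prop:shapes-maximal-sameker}) a routine exercise yielding $\lfloor n^2/4\rfloor+1$; without it, your proposal has no mechanism to compute the exponents $e(m,\mu)$, and your appeal to \Cref{prop:counting-by-shape} does not help, since that proposition only reduces to the intersection $\Cent M\cap\Cl M$, which remains uncomputed in general. You should replace the vague appeal to commuting-pair classification with this specific kernel argument and dimension formula.
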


\subsection{Outline and strategy}

In \Cref{sn:diag}, we prove \Cref{thm:1-diag} about~$\A^\diag$.
Using the Lang--Weil bound, the claim reduces to the computation of geometric invariants of the constructible subset $\A^\diag \subseteq \Mat_n(\bar\F_p)$, namely its dimension and the number of its irreducible components of maximal dimension.
To determine the top-dimensional irreducible components of $\A^\diag$, we stratify this set according to how the eigenspaces of a matrix~$M$ intersect those of~$\sigma(M)$, using quivers to encode this combinatorial information.

In \Cref{sn:general}, we show \Cref{prop:counting-by-shape} (and thus \Cref{thm:1-all}).
To relate the dimension of $\A$ to the numbers $d(M)$ defined above, we stratify $\Mat_n(\F_q)$ according to the shape of the Jordan normal form of matrices, i.e., the number of Jordan blocks of each size for each eigenvalue.

In \Cref{subsn:eigenspaces-def-over-fp}, we prove \Cref{thm:eig}, counting matrices in $\A(\F_q)$ with eigenspaces defined over~$\F_p$.
This special case lets us illustrate the principle described in \Cref{sn:general}, and is made accessible by the fact that classifying pairs of commuting matrices \emph{whose eigenspaces coincide} is relatively easy (cf.~\Cref{prop:compute-dim-esh}/\Cref{lem:dim-cent-sameker}).

In \Cref{sn:stable}, we prove \Cref{thm:infty-diag,thm:infty-all}.
First, we quickly deal with the special case $n=2$ (\Cref{cor:count-2-by-2}).
Then, we study~$\A^\diag_\infty$ and~$\A_\infty$, and we observe (see~\Cref{lemma:stable}) that for any matrix $M \in \A_\infty$, its Frobenius orbit $\bigl(\sigma^i(M)\bigr)_{i \geq 0}$ generates a commutative algebra of $\Mat_n(\bar\F_p)$ defined over~$\F_p$---moreover consisting of simultaneously diagonalizable matrices if $M \in \A^\diag_\infty$.
Hence, the statements boil down to counting specific subalgebras of~$\Mat_n(\F_p)$: the case of $\A_\infty$ essentially boils down to a classical result of Schur (cf.~\cite{schur,mirza}), and the case of $\A_\infty^\diag(\F_q)$ is a special case of a result of Steinberg (cf.~\cite[14.16]{steinberg}/\cite[Theorem~5.8]{repstab}).

\subsection{Motivation and related results}

Our initial contact with this problem came from the role played by analogous counts in the distribution of wildly ramified extensions of the local function field $\F_q(\!(T)\!)$, see \cite[Propositions~4.6 and~4.9]{wildcount}.
In \cite[Lemmas~6.3, 6.4, 6.5]{wildcount}, we estimate the number of matrices commuting with their Frobenius (as well as with the Frobenius of their Frobenius, etc.) in a specific group of invertible matrices, the Heisenberg group~$H_k(\F_q)$.
This has allowed us to describe the distribution of $H_k(\F_p)$-extensions of $\F_q(\!(T)\!)$ and $\F_q(T)$.
We were led to generalize that question to more general matrices, and to study it for itself, after realizing that it was a deep and non-trivial problem.

A different point of view is that we are counting the $(\F_q, \sigma)$-points of the difference scheme defined by the difference equation $M \sigma(M) = \sigma(M) M$ (for~$\A$).
This makes our problem fit into the general framework of Hrushovski--Lang--Weil estimates, cf. \cite{langweiltordu,hils2024langweiltypeestimatesfinite}.
Alternatively, one can define the variety of pairs of commuting matrices (an irreducible subvariety of~$\mathbb A^{2n^2}_{\F_p}$ which is well-studied, see e.g.~\cite{motzkintaussky,gersten2,guralnick,commpair})
and describe the geometry (dimension, irreducible components,~...) of its intersection with the graph of~$\sigma$ (also a subvariety of~$\mathbb A^{2n^2}_{\F_p}$).

\subsection{Terminology and conventions}

A linear subspace~$V \subseteq \bar\F_p^n$ is \emph{defined over $\F_{p^r}$} if it is $\sigma^r$-invariant, i.e., $\sigma^r(V)=V$.
By Galois descent for vector spaces, this is equivalent to the vector space having a basis consisting of vectors in $\F_{p^r}^n$, i.e., to the existence of an isomorphism $V\simeq V'\otimes_{\F_{p^r}}\bar\F_p$ for the $\F_{p^r}$-vector space $V' = V\cap\F_{p^r}^n$.

In this paper, the word \emph{variety} always refers to a classical quasi-projective variety over~$\bar\F_p$, i.e., a (Zariski) locally closed subset of~$\P^r(\bar\F_p)$ for some~$r \geq 1$.
We do \emph{not} assume that varieties are irreducible.
We say that a variety $V \subseteq \P^r(\bar\F_p)$ is \emph{defined over $\F_p$} if it is $\sigma$-invariant, i.e., $\sigma(V) = V$ where $\sigma \colon \P^r(\bar\F_p) \to \P^r(\bar\F_p)$ is induced by $\sigma \colon x \mapsto x^p$.
If $V \subseteq \P^r(\bar\F_p)$ is a variety and $q$ is a power of $p$, we let $V(\F_q) := V \cap \P^r(\F_q)$.
The \emph{dimension} of a constructible subset of~$\P^r(\bar\F_p)$ is the (Krull) dimension of its Zariski closure.
A regular map $f \colon X \to Y$ between smooth varieties is \emph{étale} if for every $x\in X$, the differential $D_xf \colon T_xX\to T_{f(x)}Y$ of~$f$ at~$x$ is an isomorphism of $\bar\F_p$-vector spaces.%
\footnote{
	By Hilbert's Nullstellensatz, varieties form a category equivalent to that of reduced quasi-projective schemes over~$\bar\F_p$.
	A variety $V$ is defined over $\F_p$ if and only if the corresponding reduced $\bar\F_p$-subscheme of $\P^r_{\bar\F_p}$ is obtained via extension of scalars of a geometrically reduced $\F_p$-subscheme of $\P^r_{\F_p}$.
	In that case, the elements of $V(\F_q)$ actually correspond to the $\F_q$-points of $V$.
	A regular map between smooth varieties is étale if and only if the corresponding morphism of reduced smooth quasi-projective schemes is étale.
}

\subsection{Acknowledgments}

This work was supported by the Deutsche Forschungsgemeinschaft (DFG, German Research Foundation) --- Project-ID 491392403 --- TRR 358 (Project A4).
The authors thank Jürgen Klüners for helpful feedback on a first version.

\section{Diagonalizable matrices commuting with their Frobenius}
\label{sn:diag}

In this section, we determine the asymptotics of $|\A^\diag(\F_q)|$, i.e., we prove \Cref{thm:1-diag}.
In \Cref{subsn:matrices-and-quivers}, we associate to any such matrix a quiver $\Qu$ encoding the dimensions of the intersections of the eigenspaces of~$M$ with those of~$\sigma(M)$.
This will let us write $\A^\diag$ as a disjoint union of equidimensional constructible subsets $\A^\diag_\Qu \subseteq \Mat_n(\bar\F_p)\simeq\bar\F_p^{n^2}$.
In \Cref{subsn:octopus}, we identify those quivers $\Qu$ for which the dimension of $\A^\diag_\Qu$ is maximal, and in \Cref{subsn:diag-2,subsn:tool-irred,subsn:octopus-variety,subsn:diag-dumbbell}, we compute the irreducible components of the corresponding sets $\A^\diag_\Qu$, and we show that they are defined over~$\F_p$.
This allows us to prove \Cref{thm:1-diag} using the Lang--Weil bound in \Cref{subsn:diag-conclusion}.

\subsection{Diagonalizable matrices and their associated quivers}
\label{subsn:matrices-and-quivers}

\paragraph{Balanced quivers.}
A \emph{quiver} is a finite directed graph in which one also allows loops (from a vertex to itself) and multiple parallel edges.
We say that a vertex of a quiver is \emph{isolated} if there are no edges (including loops) having that vertex as either source or target.
We say that a quiver is \emph{balanced} if, for each vertex, equally many edges have that vertex as source and as target (i.e., in-degrees and out-degrees coincide).
If $\Qu$ is a quiver, we denote by $V(\Qu)$ the set of its vertices, and by~$\Qu(i,j)$ the set of edges $i \to j$ for any $i,j \in V(\Qu)$.
Assuming that $\Qu$ is balanced, we also define the degree $d_{\Qu}(i) := \sum_{j \in V(\Qu)} |\Qu(i,j)| = \sum_{j \in V(\Qu)} |\Qu(j,i)|$ of each vertex $i \in V(\Qu)$.
We let $\Bal_n$ be the (finite) set of isomorphism classes of balanced quivers with no isolated vertices and $n$ edges.

\paragraph{Quiver associated to a matrix.}
Let $M \in \Mat_n(\bar\F_p)$.
For each eigenvalue $\lambda$ of~$M$, let $E_\lambda$ be the eigenspace $\ker(M - \lambda I_n)$.
Note that $\sigma(E_\lambda) = \ker\bigl( \sigma(M) - \sigma(\lambda) I_n \bigr)$ is the eigenspace of~$\sigma(M)$ for the eigenvalue $\sigma(\lambda)$.

\begin{definition}
	We associate to the matrix $M$ a quiver $\Qu_M$ defined as follows:
	\begin{itemize}
		\item
			its vertices are the eigenvalues $\lambda$ of~$M$;
		\item
			for any eigenvalues  $\lambda, \mu$, the number of edges $\lambda \to \mu$ is the dimension of~$E_\lambda \cap \sigma(E_\mu)$.
	\end{itemize}
\end{definition}

\begin{proposition}
	\label{prop:quM-in-Baln}
	Let $M\in\Mat_n^\diag(\bar\F_p)$.
	Then, $M\in\A^\diag$ if and only if the corresponding quiver~$\Qu_M$ has exactly $n$ edges.
	In that case, $\Qu_M \in \Bal_n$, and $\dim E_\lambda = d_{\Qu_M}(\lambda)$ for all eigenvalues $\lambda$.
\end{proposition}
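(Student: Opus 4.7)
The plan is to exploit the fact that if $M \in \A^\diag$, then $M$ and $\sigma(M)$ are two commuting diagonalizable matrices, hence simultaneously diagonalizable, so that the ambient space decomposes as a direct sum of joint eigenspaces. The number of edges of $\Qu_M$ counts precisely the total dimension of these joint eigenspaces, and balancedness will follow from comparing the decomposition indexed by $M$-eigenvalues with the one indexed by $\sigma(M)$-eigenvalues.

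First I would observe the following key general inequality, valid for any diagonalizable $M$: since $\bar\F_p^n = \bigoplus_\mu \sigma(E_\mu)$ (because $\sigma(M)$ is diagonalizable), the subspaces $E_\lambda \cap \sigma(E_\mu) \subseteq E_\lambda$ are in direct sum as $\mu$ varies, giving
\[
	\sum_{\mu} \dim\bigl(E_\lambda \cap \sigma(E_\mu)\bigr) \leq \dim E_\lambda.
\]
Summing over $\lambda$ yields that the number of edges of $\Qu_M$ is at most $\sum_\lambda \dim E_\lambda = n$, with equality if and only if $E_\lambda = \bigoplus_\mu \bigl(E_\lambda \cap \sigma(E_\mu)\bigr)$ for every eigenvalue $\lambda$.

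Next I would verify the two implications. If $M \in \A^\diag$, then $M$ and $\sigma(M)$ commute and are both diagonalizable, so they are simultaneously diagonalizable; a common eigenbasis provides a basis of each $E_\lambda$ consisting of vectors lying in the various $\sigma(E_\mu)$, whence $E_\lambda = \bigoplus_\mu(E_\lambda \cap \sigma(E_\mu))$, and thus $\Qu_M$ has exactly $n$ edges. Conversely, if $\Qu_M$ has $n$ edges, the equality case of the inequality above shows that each $E_\lambda$ decomposes as $\bigoplus_\mu(E_\lambda \cap \sigma(E_\mu))$; in particular, $E_\lambda$ is spanned by eigenvectors of $\sigma(M)$, so $E_\lambda$ is $\sigma(M)$-stable and $\sigma(M)|_{E_\lambda}$ is diagonalizable, which means $M$ and $\sigma(M)$ are simultaneously diagonalizable, and hence commute.

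Finally, to establish $\Qu_M \in \Bal_n$ and the degree formula when $M \in \A^\diag$, I would compute the out-degree at $\lambda$ as
\[
	\sum_{\mu} |\Qu_M(\lambda, \mu)| = \sum_\mu \dim\bigl(E_\lambda \cap \sigma(E_\mu)\bigr) = \dim E_\lambda
\]
by the decomposition just established. Applying the symmetric argument with the roles of $M$ and $\sigma(M)$ swapped (i.e., decomposing $\sigma(E_\lambda) = \bigoplus_\mu (E_\mu \cap \sigma(E_\lambda))$, which holds by the same equality-case reasoning applied to the other direct-sum decomposition), the in-degree at $\lambda$ equals $\dim \sigma(E_\lambda) = \dim E_\lambda$, so $\Qu_M$ is balanced with $d_{\Qu_M}(\lambda) = \dim E_\lambda \geq 1$, ensuring no vertex is isolated. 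There is no serious obstacle here; the only subtlety is the bookkeeping around the two symmetric direct-sum decompositions, which I would handle by packaging them as a single biliteral statement in the equality case of the fundamental dimension inequality.
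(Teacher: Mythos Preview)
Your proof is correct and follows essentially the same approach as the paper: both arguments rest on the observation that the spaces $E_\lambda\cap\sigma(E_\mu)$ are always in direct sum (so the edge count is at most $n$), with equality precisely when $M$ and $\sigma(M)$ are simultaneously diagonalizable, and the balancedness and degree formula follow from reading off the two decompositions $E_\lambda=\bigoplus_\mu(E_\lambda\cap\sigma(E_\mu))$ and $\sigma(E_\lambda)=\bigoplus_\mu(E_\mu\cap\sigma(E_\lambda))$. The paper's version is slightly more compressed, but there is no substantive difference.
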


\begin{proof}
	Since $\bigoplus_\lambda E_\lambda = \bar\F_p^n$ and $\bigoplus_\lambda\sigma(E_\lambda)=\bar\F_p^n$, the spaces $E_\lambda\cap\sigma(E_\mu)$ are always linearly independent.
	The diagonalizable matrices $M$ and $\sigma(M)$ commute if and only if they are simultaneously diagonalizable, i.e., if and only if
	\[
		\bigoplus_{\lambda,\mu}
			\Bigl({
				E_\lambda
				\cap
				\sigma(E_\mu)
			}\Bigr)
		= \bar\F_p^n,
	\]
	meaning that the quiver $\mathcal Q_M$ has exactly $n$ edges.
	In that case, for any eigenvalue $\lambda$ of~$M$, we have
	\[
		\bigoplus_\mu
			\Bigl({
				E_\lambda
				\cap
				\sigma(E_\mu)
			}\Bigr)
		=
		E_\lambda
		\simeq
		\sigma(E_\lambda)
		=
		\bigoplus_\mu
			\Bigl({
				E_\mu
				\cap
				\sigma(E_\lambda)
			}\Bigr),
	\]
	so the quiver is balanced and satisfies $d_{\Qu_M}(\lambda)=\dim E_\lambda$ (in particular, it has no isolated vertices).
\end{proof}

\paragraph{The space of matrices having a given quiver.}

For any quiver $\Qu \in \Bal_n$, we define the subset $\A^\diag_\Qu \subseteq \A^\diag$ of matrices~$M$ such that~$\Qu_M$ and~$\Qu$ are isomorphic.%
\footnote{
	Be aware that this is \emph{not} a quiver variety or a quiver Grassmannian in the usual sense.
}
(The vertices of the quiver~$\Qu$ are purely combinatorial: the $\card {V(\Qu)}$ eigenvalues of a matrix~$M \in \A_\Qu^\diag$ are subject to no constraints other than distinctness.)
\Cref{prop:quM-in-Baln} directly implies:
\begin{equation}
	\label{eqn:partition-diag}
	\A^\diag
	=
	\bigsqcup_{\Qu \in \Bal_n}
		\A^\diag_\Qu.
\end{equation}
We will show that each set $\A^\diag_\Qu$ is constructible, so that, by the Lang--Weil estimates (cf.~\cite{lang-weil}), the leading term in the asymptotics of $|\A^\diag (\F_q)|$ depends on the maximal dimension of~$\A^\diag_\Qu$ over quivers $\Qu \in \Bal_n$, and on the number of irreducible components having that dimension that are defined over $\F_p$.

Fix a quiver $\Qu \in \Bal_n$.
In order to compute the geometric invariants of~$\A^\diag_\Qu$, we explain how to construct all the diagonalizable matrices~$M$ such that $\Qu_M \simeq \Qu$.
For each vertex $i$ of~$\Qu$, we must pick an eigenvalue $\lambda_i$ and an eigenspace $V_i$, making sure that:
\begin{itemize}
	\item
		the eigenvalues $\lambda_i$ are distinct;
	\item
		the eigenspaces $V_i$ are in direct sum, and together span the entire ($n$-dimensional) space;
	\item
		the dimension of~$V_i\cap\sigma(V_j)$ equals the number of edges $i\to j$ in $\Qu$.
\end{itemize}
For any finite-dimensional vector space $V$ and any $k$, we denote by $\Gr_k(V)$ the Grassmannian variety parametrizing $k$-dimensional subspaces of~$V$.
This space has dimension $k(\dim V-k)$ if $0\leq k\leq\dim V$ and is otherwise empty.
(See for example \cite[Lecture~6]{harris-algebraic-geometry} for an introduction to Grassmannians.)
We also write $\P(V) := \Gr_1(V)$ for the projective space parametrizing one-dimensional subspaces of~$V$.
We will repeatedly make use of the fact that for any $k,l,m$, the subset
\begin{equation}\label{eq:fixed-intersection-space}
	\{(A,B)\in\Gr_k(V)\times\Gr_l(V) \mid \dim(A+B) = m\}
\end{equation}
of $\Gr_k(V)\times\Gr_l(V)$ is locally closed, and that the maps defined on that set mapping $(A,B)$ to $A + B \in \Gr_m(V)$ (resp.\ to $A \cap B \in \Gr_{k+l-m}(V)$) are regular.
Moreover, for any $n,k\geq0$, the following map is also regular:
\[
	\Gr_k(\bar\F_p^n) \to \Gr_k(\bar\F_p^n),
	\qquad
	A \mapsto \sigma(A).
\]

Let $r = |V(\Qu)|$, say $V(\Qu)=\{1,\dots,r\}$.
We define the following two quasi-projective varieties:
\begin{itemize}
	\item
		$\Y_{\Qu}$ is the variety of ordered tuples $(\lambda_1,\dots,\lambda_r)$ of distinct elements of~$\bar\F_p$.
		It is a non-empty Zariski open subset of~$\bar\F_p^r$, hence it is Zariski dense and its dimension is $r = |V(\Qu)|$.
	\item
		$\Z_{\Qu}$ is the (locally closed) subspace of~$\Gr_{d_\Qu(1)}(\bar\F_p^n)\times\cdots\times\Gr_{d_\Qu(r)}(\bar\F_p^n)$ consisting of those tuples~$(V_1,\dots,V_r)$ of subspaces of~$\bar\F_p^n$ of dimensions~$d_\Qu(1),\dots,d_\Qu(r)$ which together span $\bar\F_p^n$ and such that $\dim\left(V_i\cap\sigma(V_j)\right) = |\Qu(i,j)|$ for all $i,j$.
\end{itemize}

Sending a pair $\bigl((\lambda_1,\dots,\lambda_r),(V_1,\dots,V_r)\bigr) \in \Y_\Qu \times \Z_\Qu$ to the diagonalizable matrix $M$ with eigenvalues $\lambda_1,\dots,\lambda_r$ and corresponding eigenspaces $V_1,\dots,V_r$, we obtain a regular map
\begin{equation}
	\label{eqn:surject-to-xqu}
	\Y_{\Qu} \times \Z_{\Qu} \to \Mat_n(\bar \F_p)
\end{equation}
whose image is exactly $\A^\diag_\Qu$ by \Cref{prop:quM-in-Baln}.
In particular, $\A^\diag_\Qu$ is a constructible
subset of~$\Mat_n(\bar\F_p)$ by Chevalley's theorem.
The group $\Aut(\Qu)$ consisting of automorphisms of the quiver, i.e., of permutations of the vertices which preserve edge multiplicities, acts simply transitively on each fiber above a point of~$\A^\diag_\Qu$.
Moreover, the Frobenius automorphism acts on the sets $\A^\diag_\Qu$, $\Y_\Qu$, $\Z_\Qu$, and the map from \Cref{eqn:surject-to-xqu} is $\sigma$-equivariant.

To compute the dimension of~$\Z_\Qu$, we use the following lemma:

\begin{lemma}
	\label{lem:wp-etale}
	Let $r\geq1$.
	The map $\bwp \colon \GL_n(\bar\F_p)\to\GL_n(\bar\F_p)$ given by $\bwp(E):= E^{-1}\sigma^r(E)$ is étale and surjective.
	Moreover, $\GL_n(\F_{p^r})$ acts simply transitively on each fiber by left multiplication.
\end{lemma}

Consequently, if~$S$ is a locally closed subset of $\GL_n(\bar\F_p)$ of pure dimension~$d$, then so is~$\bwp^{-1}(S)$.
This fact will be used several times in the paper.

\begin{proof}
	More generally, for any $A\in\GL_n(\bar\F_p)$, consider the map $\bwp_A \colon \GL_n(\bar\F_p)\to\GL_n(\bar\F_p)$ given by $\bwp_A(E):= E^{-1}A\sigma^r(E)$.
	As $p=0$ in $\bar\F_p$, the differential of~$\sigma^r$ is the zero map (at any point); by the product rule, the differential of~$\bwp_A$ at a matrix $E \in \GL_n(\bar\F_p)$ thus maps a tangent vector $\textrm{d}E \in \Mat_n(\bar\F_p)$ to $-E^{-1}\textrm{d} E E^{-1} A \sigma^r(E) \in \Mat_n(\bar\F_p)$.
	Hence, the differential of $\wp_A$ at every point $E$ is a linear isomorphism, so~$\bwp_A$ is étale.
	Since domain and target have the same dimension and $\GL_n(\bar\F_p)$ is irreducible, this implies that $\bwp_A$ is dominant for all $A$.
	The image of $\bwp_A$ (which is dense, and constructible by Chevalley's theorem) then contains a non-empty open subset of $\Mat_n(\bar\F_p)$, hence intersects the (dense) image of~$\bwp_{I_n} = \bwp$.
	We have an equality $\bwp_A(E_1) = \bwp(E_2)$, implying that $A = \bwp(E_2 E_1^{-1})$.
	We have shown that the map $\bwp$ is surjective.
	
	Finally, we have $E^{-1}\sigma^r(E)=E'^{-1}\sigma^r(E')$ if and only if $E'E^{-1} \in \GL_n(\F_{p^r})$, so all non-empty fibers of~$\bwp$ are right $\GL_n(\F_{p^r})$-cosets.
\end{proof}

\begin{lemma}
	\label{lem:dim-Z}
	~
	\begin{enumalpha}
	\item
		\label{item:dim-Z-normal}
		The space $\Z_{\Qu}$ is non-empty and has pure dimension $\sum_i d_{\Qu}(i)^2 - \sum_{i,j} |\Qu(i,j)|^2$, and the finite group $\GL_n(\F_p)$ acts transitively on the set of its irreducible components.
	\item
		\label{item:dim-Z-special}
		Let $k\in V(\Qu)$ with $0<|\Qu(k,k)|<d_\Qu(k)$.
		Consider the locally closed subset $\Z_{\Qu,k} \subseteq \Z_\Qu$ consisting of those tuples $(V_1,\dots,V_r)\in\Z_\Qu$ for which $V_k\cap\sigma(V_k)$ is defined over~$\F_p$.
		This subset has strictly smaller dimension than $\Z_\Qu$.
	\end{enumalpha}
\end{lemma}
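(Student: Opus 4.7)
The plan for part (a) is to parametrize $\Z_\Qu$ using the Lang map $\bwp$ from \Cref{lem:wp-etale} together with an auxiliary configuration space. After fixing an $\F_p$-defined direct sum decomposition $\F_p^n = V_1^0 \oplus \cdots \oplus V_r^0$ with $\dim V_i^0 = d_\Qu(i)$, I would introduce the variety $\tilde\Y$ of tuples $(A_{ij}) \in \prod_{i,j} \Gr_{|\Qu(i,j)|}(V_i^0)$ such that $V_i^0 = \bigoplus_j A_{ij}$ for each $i$; this is a non-empty open subset of an irreducible product of Grassmannians, of dimension $\sum_{i,j}|\Qu(i,j)|(d_\Qu(i)-|\Qu(i,j)|) = \sum_i d_\Qu(i)^2 - \sum_{i,j}|\Qu(i,j)|^2$. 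Next, the regular map $h \mapsto (V_i^0 \cap h V_j^0)_{i,j}$ realizes $\tilde\Z_\Qu'$ as a $\prod_j\GL(V_j^0)$-torsor over $\tilde\Y$ (the fiber over $(A_{ij})$ being, for each $j$, the set of isomorphisms $V_j^0 \xrightarrow{\sim} \bigoplus_i A_{ij}$), so $\tilde\Z_\Qu'$ is irreducible. A short calculation using $\bwp(gv) = v^{-1}\bwp(g)\sigma(v)$ and the fact that $v$ and $\sigma(v)$ act identically on $V_j^0$ for $v \in \prod_i\GL(V_i^0)$ shows that the map $\bwp^{-1}(\tilde\Z_\Qu') \to \Z_\Qu$, $g \mapsto (gV_i^0)_i$, is also a surjective $\prod_i\GL(V_i^0)$-torsor. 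Combining with the étale $\GL_n(\F_p)$-torsor $\bwp$, I obtain $\dim \Z_\Qu = \dim \tilde\Y$ and a transitive action of $\GL_n(\F_p)$ on the irreducible components of $\Z_\Qu$ (coming from the Galois action on the cover $\bwp^{-1}(\tilde\Z_\Qu') \to \tilde\Z_\Qu'$), forcing all components to share a common dimension.

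For part (b), I would first observe that the condition ``$V_k \cap \sigma(V_k)$ is defined over $\F_p$'' forces $V_k$ to contain a non-trivial $\F_p$-defined subspace of dimension $|\Qu(k,k)|$; consequently $\Z_{\Qu,k}$ sits inside the finite union $\bigcup_W \{(V_1, \ldots, V_r) \in \Z_\Qu : W \subseteq V_k\}$ over $\F_p$-defined $|\Qu(k,k)|$-dimensional subspaces $W \subseteq \bar\F_p^n$, which is a closed subset of $\Z_\Qu$. Since $\GL_n(\F_p)$ preserves $\F_p$-defined subspaces and acts transitively on irreducible components by part~(a), my plan is to reduce the problem to producing a single tuple $(V_1, \ldots, V_r) \in \Z_\Qu$ with $V_k \cap \sigma(V_k)$ \emph{not} defined over $\F_p$: the $\GL_n(\F_p)$-orbit of such a tuple then places representatives in every component, showing that $\Z_{\Qu,k}$ is a proper closed subset of each irreducible component, hence of strictly smaller dimension.

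The hard part will be the explicit construction of this tuple. I would choose a bijection $\pi$ of the edges of $\Qu$ sending, for each vertex $j$, the out-edges of $j$ bijectively to the in-edges of $j$ (such $\pi$ exist since $\Qu$ is balanced), together with a line $L_e \subseteq \bar\F_p^n$ for each edge $e$ satisfying $\sigma(L_e) = L_{\pi(e)}$ and with $(L_e)_e$ forming a basis of $\bar\F_p^n$ (achievable by picking a suitably generic $\F_{p^\ell}$-defined line in each $\pi$-orbit of length $\ell$ and propagating it by $\sigma$). Setting $V_i := \bigoplus_{e:\,\mathrm{source}(e) = i} L_e$ then places the tuple in $\Z_\Qu$, since by construction $V_i \cap \sigma(V_j) = \bigoplus_{\mathrm{source}(e) = i,\,\mathrm{target}(e) = j} L_e$ has dimension $|\Qu(i,j)|$. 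One computes $V_k \cap \sigma(V_k) = \bigoplus_{e\text{ loop at }k} L_e$, which is $\sigma$-stable if and only if $\pi$ maps the set of loops at $k$ to itself. The assumption $0 < |\Qu(k,k)| < d_\Qu(k)$ guarantees that $k$ has both loops and non-loop incoming edges, so $\pi$ can be chosen to send at least one loop at $k$ to a non-loop edge, producing the desired tuple.
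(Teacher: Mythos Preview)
Your argument for part~(a) is correct and uses the same underlying mechanism as the paper: translate the condition $\sigma(V_j)=\bigoplus_i(V_i\cap\sigma(V_j))$ into the condition $\bwp(g)\in S$ for an irreducible subvariety $S\subseteq\GL_n$, then invoke \Cref{lem:wp-etale}. The paper parametrizes the finer data $(U_{ij})=(V_i\cap\sigma(V_j))$ directly via $\Isom(\bigoplus C_{ij},\bar\F_p^n)$, taking $S=\prod_i\Isom(\bigoplus_j C_{ij},\bigoplus_j C_{ji})$; you instead parametrize only $(V_i)$ via $g\mapsto(gV_i^0)$ and take $S=\tilde\Z_\Qu'$. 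Your route requires the extra step of checking that $\tilde\Z_\Qu'$ is irreducible (which it is, being a single orbit of the connected group $\prod_i\GL(V_i^0)\times\prod_j\GL(V_j^0)$ acting on $\GL_n$), but otherwise the two approaches are equivalent repackagings of the same idea.

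Your approach to part~(b) is genuinely different and interesting. The paper stays inside the parametrization from~(a): the extra condition $\sigma(U_{kk})=U_{kk}$ cuts $S$ down to a subvariety $S'$, and a two-line linear-algebra count gives $\dim S'=\dim S-|\Qu(k,k)|\cdot(d_\Qu(k)-|\Qu(k,k)|)<\dim S$. This is shorter and yields the exact codimension. Your strategy---exhibit one tuple with $V_k\cap\sigma(V_k)$ not defined over~$\F_p$, then use the $\GL_n(\F_p)$-transitivity on components from~(a) to propagate it---is more constructive and avoids any further dimension computation, at the cost of the explicit edge-bijection construction.

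There is one small gap in your write-up of~(b). You introduce the closed set $C=\bigcup_W\{(V_i):W\subseteq V_k\}$ and then construct a tuple outside $\Z_{\Qu,k}$, but to conclude that $C$ (and hence $\Z_{\Qu,k}\subseteq C$) misses every component you need the tuple to lie outside $C$, not just outside $\Z_{\Qu,k}$. This is in fact automatic for your construction: any $\F_p$-defined $W\subseteq V_k$ satisfies $W=\sigma(W)\subseteq\sigma(V_k)$, hence $W\subseteq V_k\cap\sigma(V_k)$, so if $\dim W=|\Qu(k,k)|=\dim(V_k\cap\sigma(V_k))$ then $W=V_k\cap\sigma(V_k)$, which you arranged to be non-$\sigma$-stable. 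State this explicitly. Alternatively, drop $C$ altogether and observe directly that $\Z_{\Qu,k}$ is closed in $\Z_\Qu$ (it is the preimage of the finite set $\Gr_{|\Qu(k,k)|}(\F_p^n)$ under the regular map $(V_i)\mapsto V_k\cap\sigma(V_k)$).
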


\begin{proof}
	~
	\begin{enumalpha}
	\item
		The formulas $U_{ij} := V_i\cap\sigma(V_j)$ and $V_i:=\bigoplus_j U_{ij}$ define two inverse regular maps, showing that~$\Z_{\Qu}$ is isomorphic to the subvariety~$\tilde\Z_\Qu$ of~$\prod_{i,j} \Gr_{|\Qu(i,j)|}(\bar\F_p^n)$ parametrizing tuples $(U_{ij})_{i,j\in[r]}$ of subspaces of~$\bar\F_p^n$ satisfying the following three conditions: $\dim U_{ij} = |\Qu(i,j)|$ for all $i,j \in V(\Qu)$, $\bigoplus_{i,j}U_{ij} = \bar\F_p^n$, and $\sigma(\bigoplus_j U_{ij}) = \bigoplus_j U_{ji}$ for all $i \in V(\Qu)$.

		Define the $\bar\F_p$-vector spaces $C_{ij} := \bar\F_p^{|\Qu(i,j)|}$ and $C := \bigoplus_{i,j} C_{ij}$.
		By definition, $C$ is isomorphic to $\bar\F_p^n$.
		In order to parametrize tuples $(U_{ij})_{i,j}\in\tilde\Z_\Qu$, we consider the surjective regular map
		\[
			f \colon
			\Isom(C, \bar\F_p^n) \to
			\Bigl\{
				(U_{ij})_{i,j}
				\;\Big\vert\;
				\dim U_{ij} = |\Qu(i,j)|
				\textnormal{ and }
				{\textstyle\bigoplus_{i,j} U_{ij}} = \bar\F_p^n
			\Bigr\},
			\quad
			E \mapsto \bigl(E(C_{ij})\bigr)_{i,j},
		\]
		whose fibers are isomorphic to the variety
		\[
			F:= \prod_{i,j}\GL(C_{ij}),
			\;\;\textnormal{of dimension }
			\sum_{i,j} (\dim C_{ij})^2 = \sum_{i,j} |\Qu(i,j)|^2.
		\]
		For any $E\in\Isom(C,\bar\F_p^n)$, let $\sigma(E)$ be the $\bar\F_p$-linear isomorphism obtained as the composition $C \stackrel{\sigma^{-1}}\longrightarrow C \stackrel{E}\to \bar\F_p^n \stackrel{\sigma}\longrightarrow \bar\F_p^n$, where~$\sigma$ acts on $C$ and on $\bar\F_p^n$ in the natural way.
		We have $\sigma(\bigoplus_j U_{ij}) = \bigoplus_j U_{ji}$ if and only if $\bwp(E) := E^{-1}\sigma(E)$ sends $\bigoplus_j C_{ij}$ to $\bigoplus_j C_{ji}$, i.e., if and only if $\bwp(E)$ lies in the irreducible variety
		\[
			S
			:=
			\prod_i
			\Isom\biggl({
				\bigoplus_j C_{ij}, \,
				\bigoplus_j C_{ji}
			}\biggr),
			\;\;\textnormal{of dimension }
			\sum_i
				\Bigl(
					\sum_j
						\dim C_{ij}
				\Bigr)
				\Bigl(
					\sum_j
						\dim C_{ji}
				\Bigr)
			=
			\sum_i
				d_\Qu(i)^2.
		\]
		In other words, $\tilde\Z_\Qu = f(\bwp^{-1}(S))$.
		Together with \Cref{lem:wp-etale}, this implies that $\Z_\Qu \simeq \tilde\Z_\Qu = f(\bwp^{-1}(S))$ is non-empty and has pure dimension
		\[
			\dim \Z_\Qu =
			\dim \bwp^{-1}(S) - \dim F
			= \dim S - \dim F
			= \sum_i d_\Qu(i)^2 - \sum_{i,j} |\Qu(i,j)|^2
		\]
		and that $\GL_n(\F_p)$ acts transitively on the set of its irreducible components.
	\item
		We reason as in~\ref{item:dim-Z-normal}.
		In terms of the notation above, the condition $\sigma(U_{kk})=U_{kk}$ means that~$\bwp(E)$ must send~$C_{kk}$ to itself, so~$S$ must be replaced by the subset $S' := \{ A\in S \mid A(C_{kk}) = C_{kk} \}$, and the claim reduces to showing that $\dim S' < \dim S$.
		We can describe $S$ as the subset of the vector space $\Hom(C_{kk},C_{kk})\times\prod_{(i,j)\neq(k,k)}\Hom(C_{ij},\bigoplus_{j'}C_{j'i}) \subseteq \Hom(C,C)$ formed of those endomorphisms which are invertible, so $S'$ has the same dimension as that vector space, namely
		\begin{align*}
			\dim S'
			&=
			|\Qu(k,k)|^2 + \sum_{(i,j)\neq(k,k)}|\Qu(i,j)|\cdot d_\Qu(i)
			\\
			&=
			|\Qu(k,k)|^2 - |\Qu(k,k)|\cdot d_\Qu(k) + \sum_{i,j}|\Qu(i,j)|\cdot d_\Qu(i)
			\\
			&=
			-|\Qu(k,k)|\cdot(d_\Qu(k)-|\Qu(k,k)|) + \sum_i d_\Qu(i)^2
			\\
			&
			< \sum_i d_\Qu(i)^2
			= \dim S.
			\qedhere
		\end{align*}
	\end{enumalpha}
\end{proof}

\begin{corollary}
	\label{prop:dim-xqu}
	The subset $\A^\diag_\Qu \subseteq \Mat_n(\bar\F_p)$ is constructible, of pure dimension
	\[
		\dim \A^\diag_\Qu
		=
		|V(\Qu)|
		+
		\sum_{i \in V(\Qu)}
			d_{\Qu}(i)^2
		-
		\sum_{i, j \in V(\Qu)}
			|\Qu(i,j)|^2.
	\]
\end{corollary}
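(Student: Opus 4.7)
The plan is to transfer the dimension computation from the parameter space $\Y_\Qu \times \Z_\Qu$ to $\A^\diag_\Qu$ via the surjective regular map $f \colon \Y_\Qu \times \Z_\Qu \to \A^\diag_\Qu$ from~\eqref{eqn:surject-to-xqu}. Constructibility comes for free: $\Y_\Qu \times \Z_\Qu$ is a quasi-projective variety, so by Chevalley's theorem its image $\A^\diag_\Qu \subseteq \Mat_n(\bar\F_p)$ is constructible.

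For the dimension, I would assemble the three ingredients already established in the excerpt. First, $\Y_\Qu$ is a non-empty Zariski open subset of~$\bar\F_p^{|V(\Qu)|}$, hence irreducible of dimension $|V(\Qu)|$. Second, by \iref{lem:dim-Z}{item:dim-Z-normal}, the space $\Z_\Qu$ is non-empty of pure dimension $\sum_i d_\Qu(i)^2 - \sum_{i,j} |\Qu(i,j)|^2$. Since $\Y_\Qu$ is irreducible, the product $\Y_\Qu \times \Z_\Qu$ has pure dimension equal to the sum
\[
	|V(\Qu)| + \sum_{i \in V(\Qu)} d_\Qu(i)^2 - \sum_{i,j \in V(\Qu)} |\Qu(i,j)|^2.
\]
Third, the finite group $\Aut(\Qu)$ acts simply transitively on each fiber of~$f$ above $\A^\diag_\Qu$, so every fiber has cardinality exactly $|\Aut(\Qu)|$; in particular $f$ is generically (indeed, everywhere) finite onto its image.

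It then remains to invoke the standard fact that a regular map with finite fibers preserves dimension: $\dim f(W) = \dim W$ for every irreducible component $W$ of the source. Since all components of $\Y_\Qu \times \Z_\Qu$ share the same dimension and the finitely many closures $\overline{f(W)}$ cover the closure of $\A^\diag_\Qu$, we conclude that $\A^\diag_\Qu$ itself has pure dimension equal to the displayed formula. I do not expect any real obstacle — the entire argument is bookkeeping around the map $f$ and the two lemmas already proved. The only point requiring a sentence of justification is the transfer of \emph{pure} dimension through $f$, which follows directly from the finite-fiber statement applied componentwise.
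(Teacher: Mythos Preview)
Your proposal is correct and follows exactly the same approach as the paper: use the surjection $\Y_\Qu \times \Z_\Qu \twoheadrightarrow \A^\diag_\Qu$ with finite fibers of size $|\Aut(\Qu)|$, invoke Chevalley for constructibility, and combine $\dim \Y_\Qu = |V(\Qu)|$ with \iref{lem:dim-Z}{item:dim-Z-normal} for $\dim \Z_\Qu$. The paper's own proof is the same argument compressed into a single sentence.
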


\begin{proof}
	Since every fiber of the surjection $\Y_{\Qu}\times \Z_{\Qu}\twoheadrightarrow \A^\diag_\Qu$ is finite (of size~$|\Aut(\Qu)|$), $\A^\diag_\Qu$ is equidimensional and
	\[
		\dim \A^\diag_\Qu
		=
		\dim \Y_\Qu + \dim \Z_\Qu
		\;\underset{\textnormal{\customref{Lem.~\ref*{lem:dim-Z}\ref*{item:dim-Z-normal}}{item:dim-Z-normal}}}{=}\;
		|V(\Qu)| + \sum_i d_{\Qu}(i)^2 - \sum_{i,j}|\Qu(i,j)|^2.
		\qedhere
	\]
\end{proof}

\subsection{The octopus has maximal dimension}
\label{subsn:octopus}

\Cref{prop:dim-xqu} and \Cref{eqn:partition-diag} imply that the dimension of~$\A^\diag$ is the maximal dimension of~$\A^\diag_\Qu$ over quivers $\Qu \in \Bal_n$, and give an explicit formula for the dimension of $\A^\diag_\Qu$ in terms of the quiver~$\Qu$.
We shall now compute this maximal dimension and describe the corresponding optimal quivers.

\begin{proposition}
	\label{prop:octopus}
	Let $n \geq 1$, and let $[n/3]$ be the (uniquely defined) integer closest to $n/3$.
	Then:
	\[
		\max_{\Qu \in \Bal_n}
			\dim \A^\diag_\Qu
		=
		\left\lfloor \frac{n^2}3 \right\rfloor + 1.
	\]
	The maximum is reached by the following quiver with $[n/3]+1$ vertices, which we call the \emph{octopus quiver} (with $n$ edges) and denote by $\Oct_n$:
	\[\begin{tikzpicture}
		\node[shape=circle] (O) at (0,0) {$\bullet$};
		\node[shape=circle] (A) at (-15:2.2) {$\bullet$};
		\node[shape=circle] (B) at (-50:1.8) {$\bullet$};
		\node[shape=circle] (C) at (-130:1.8) {$\bullet$};
		\node[shape=circle] (D) at (-165:2.2) {$\bullet$};
		\node at (-90:1.7) {$\cdots$};
		\draw[->,bend left=7] (O) edge (A);
		\draw[<-,bend right=7] (O) edge (A);
		\draw[->,bend left=10] (O) edge (B);
		\draw[<-,bend right=10] (O) edge (B);
		\draw[->,bend left=10] (O) edge (C);
		\draw[<-,bend right=10] (O) edge (C);
		\draw[->,bend left=7] (O) edge (D);
		\draw[<-,bend right=7] (O) edge (D);
		\draw[->] (O) edge[in=55,out=125,loop] node[above,inner sep=0.4ex] {\scriptsize$n-2[n/3]$} (O);
	\end{tikzpicture}\]
	where the number on the top loop means that there are $n - 2 [n/3]$ parallel loops from the central vertex to itself.
	Moreover, up to isomorphism:
	\begin{itemize}
		\item
			When $n \not\in \{2,4\}$, there are no other quivers in $\Bal_n$ maximizing $\dim \A^\diag_\Qu$;
		\item
			When $n=2$, there is a single additional optimal (non-connected) quiver, namely $\Oct_1 \sqcup \Oct_1$:
			\[\begin{tikzpicture}
			\node[shape=circle] (A) at (0,0) {$\bullet$};
			\node[shape=circle] (B) at (1.5,0) {$\bullet$};
			\draw[->] (A) edge[in=145,out=-145,loop] (A);
			\draw[->] (B) edge[in=35,out=-35,loop] (B);
			\end{tikzpicture}\]
		\item
			When $n=4$, there is a single additional optimal quiver, which we call the \emph{dumbbell quiver}:
			\[\begin{tikzpicture}
				\node[shape=circle] (A) at (0,0) {$\bullet$};
				\node[shape=circle] (B) at (1.5,0) {$\bullet$};
				\draw[->] (A) edge[in=145,out=-145,loop] (A);
				\draw[->] (B) edge[in=35,out=-35,loop] (B);
				\draw[->,bend left=15] (A) edge (B);
				\draw[->,bend left=15] (B) edge (A);
			\end{tikzpicture}\]
	\end{itemize}
\end{proposition}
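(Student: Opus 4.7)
The plan is to start from the explicit dimension formula of \Cref{prop:dim-xqu}. Writing $r = |V(\Qu)|$, $d_i = d_\Qu(i)$, and $\ell_i = |\Qu(i,i)|$, this formula reads
\[
\dim \A^\diag_\Qu = r + \sum_i d_i^2 - \sum_{i,j} |\Qu(i,j)|^2.
\]
Applying $m^2 \geq m$ (for $m \in \N$, with equality iff $m \in \{0,1\}$) to each non-loop multiplicity, one obtains $\sum_{j \neq i} |\Qu(i,j)|^2 \geq d_i - \ell_i$; summing over $i$ and isolating loops yields the key upper bound
\[
\dim \A^\diag_\Qu \leq r + \sum_i d_i(d_i - 1) - \sum_i \ell_i(\ell_i - 1), \qquad (*)
\]
with equality iff every non-loop edge of $\Qu$ is simple. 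When $(*)$ is tight, simplicity of the non-loop part further forces $d_i - \ell_i \leq r - 1$, i.e., $\ell_i \geq \max(0, d_i - r + 1)$.

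Next, I would minimize (for fixed $r$) each $\ell_i(\ell_i - 1)$ under this constraint, reducing the problem to maximizing the function
\[
f_r(d) \coloneq \begin{cases} d(d-1) & \textnormal{if } d \leq r-1, \\ (r-1)(2d - r) & \textnormal{if } d \geq r \end{cases}
\]
over integer sequences $(d_i)_{i=1}^r$ with $d_i \geq 1$ and $\sum_i d_i = n$. The successive differences satisfy $f_r(d+1) - f_r(d) = \min(2d, 2(r-1))$, so $f_r$ is convex, and strictly so on $\{2, \ldots, r-1\}$. Consequently, for fixed $r$, the quantity $r + \sum_i f_r(d_i)$ is maximized by a \emph{spiked} sequence $(n - r + 1, 1, \ldots, 1)$, giving value $g(r) \coloneq r + f_r(n - r + 1) = -3r^2 + (2n+6)r - 2n - 2$ when $n \geq 2r - 1$. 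Maximizing this quadratic in $r$ via case analysis on $n \bmod 3$ yields the optimum $r = [n/3] + 1$ with value $\lfloor n^2/3 \rfloor + 1$. The octopus $\Oct_n$ realizes precisely this spiked profile at the optimal $r$, with simple non-loop edges and the prescribed loop multiplicities, hence attains the maximum.

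For uniqueness, I would trace back through the equality cases: saturation of $(*)$ forces simple non-loop edges, strict convexity of $f_r$ on $\{2, \ldots, r-1\}$ (non-empty for $r \geq 3$) forces the spiked degree sequence, and the optimality of $r = [n/3] + 1$ pins down the number of vertices. Together with the loop-multiplicity prescription, this determines the quiver up to isomorphism as $\Oct_n$. The only remaining ambiguity arises when $r = 2$ (so the strict-convexity range is empty), which happens precisely for $n \in \{2, 3, 4\}$. I would finish by directly enumerating these three low cases: for $n = 3$ only $\Oct_3$ occurs; for $n = 4$, both the spike $(3,1)$ (yielding $\Oct_4$) and the alternative $(2,2)$ (whose unique balanced realization with simple non-loop edges and $\ell_1 = \ell_2 = 1$ is the dumbbell) attain the maximum; for $n = 2$, the unique sequence $(1,1)$ admits two balanced realizations, namely $\Oct_2$ and $\Oct_1 \sqcup \Oct_1$. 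The main technical hurdle is carefully tracking all equality conditions through the chain of inequalities and verifying the small-$n$ exceptions; once the convexity of $f_r$ is in hand, the rest of the argument is essentially routine.
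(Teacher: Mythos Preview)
Your approach via the bound $(*)$ and the function $f_r$ is elegant but has a genuine gap at the step where you ``minimize each $\ell_i(\ell_i-1)$ under the constraint $\ell_i \geq \max(0,d_i-r+1)$''. That constraint is a \emph{consequence} of equality in $(*)$ (simplicity of non-loop edges), not a constraint that holds for arbitrary balanced quivers. When $(*)$ is strict, the loop count $\ell_i$ can be smaller than $d_i-r+1$, and then the right-hand side of $(*)$ can exceed $r+\sum_i f_r(d_i)$; moreover, the loss from the strict inequality in $(*)$ does not compensate. Concretely, take $n=8$, $r=2$, and $|\Qu(i,j)|=2$ for all $i,j\in\{1,2\}$. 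This quiver is balanced with $d_1=d_2=4$ and $\ell_1=\ell_2=2$, and
\[
\dim\A^\diag_\Qu = 2 + 16 + 16 - 4\cdot 4 = 18,
\qquad
r+f_r(d_1)+f_r(d_2) = 2 + 6 + 6 = 14.
\]
So the inequality $\dim\A^\diag_\Qu \leq r+\sum_i f_r(d_i)$ is simply false. The same phenomenon shows that, for fixed $r$, the maximizing quiver need \emph{not} have simple non-loop edges, so there is no easy reduction to the equality case of $(*)$.

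The paper's proof avoids this by a completely different mechanism: an induction on $n$ together with a ``cycle-removal'' argument. One removes a union of vertex-disjoint cycles of total length $\ell$ and compares dimensions via the inductive hypothesis; this yields the structural constraint that no such union has $\ell\geq3$, forcing the quiver to be built from loops and $2$-cycles, with at most two looped vertices, and all $2$-cycles sharing a common vertex. Only then does a short optimization (now genuinely one-parameter) pin down the octopus and the two exceptional quivers. Your convexity optimization over degree sequences would essentially reproduce that last step, but it cannot replace the structural argument: you would first need an independent proof that optimal quivers have simple non-loop edges (equivalently, that $(*)$ is tight at the optimum), and this is not obvious a priori.
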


\begin{proof}
	\Cref{prop:dim-xqu} gives a formula for $\dim \A^\diag_\Qu$, reducing the proposition to a purely combinatorial statement.
	The proposed quivers do reach the proposed maximum, establishing the lower bound $\max_{\Qu \in \Bal_n} \dim \A^\diag_\Qu \geq \lfloor n^2/3 \rfloor + 1$.
	We prove by induction on $n$ that this is indeed the maximum, and that the quivers reaching that maximum are exactly the proposed ones.
	We leave aside the cases $n=1$ and $n=2$, which are easily checked.
	Let $n > 2$, and assume that for all $n' < n$ and for all $\Qu' \in \mathcal \Bal_{n'}$ we have $\dim \A^\diag_{\Qu'} \leq \lfloor {n'}^2 / 3 \rfloor + 1$.
	We consider a quiver $\Qu \in \Bal_n$ satisfying $\dim \A^\diag_\Qu \geq \lfloor n^2 / 3 \rfloor + 1$.

	We first show that $\Qu$ is connected.
	For this, notice that~$\dim\A^\diag_\Qu$ is additive with respect to unions of vertex-disjoint quivers.
	By the induction hypothesis and since the function $\eta(n) := \lfloor n^2/3 \rfloor+1$ is strictly superadditive on positive integers with the single exception of the equality $\eta(1)+\eta(1)=\eta(2)$, we cannot reach or beat $\lfloor n^2/3 \rfloor+1$ if there are at least two connected components (recall that we have assumed $n>2$).

	Now, let $\ell$ be an integer, and consider a subquiver $C \subseteq \Qu$ which is a union of any number of vertex-disjoint cycles whose lengths sum to $\ell$ (for example, $C$ can be a single $\ell$-cycle), thus consisting of~$\ell$ vertices and $\ell$ edges.
	Removing from the quiver $\Qu$ the edges of~$C$ and the vertices which have become isolated, we obtain a balanced quiver~$\Qu \setminus C$ with $n-\ell$ edges.
	We have, by \Cref{prop:dim-xqu}:
	\begin{align}
		\dim \A^\diag_\Qu - \dim \A^\diag_{\Qu \setminus C}
		= \; &
		\underbrace{
			|\{
				i \in V(C)
				\;\vert\;
				d_{\Qu}(i) = 1
			\}|
		}_{\textnormal{vertices which have become isolated}}
		+
		\sum_{i \in V(C)}
			\Bigl[
				d_{\Qu}(i)^2 - (d_{\Qu}(i) - 1)^2
			\Bigr]
		\nonumber
		\\ &
		-
		\sum_{(i \to j) \in C}
			\Bigl[
				|\Qu(i,j)|^2 -
				(|\Qu(i,j)|-1)^2
			\Bigr]
		\nonumber
		\\
		= \; &
		|\{
			i \in V(C)
			\;\vert\;
			d_{\Qu}(i) = 1
		\}|
		+
		2
		\sum_{i \in V(C)}
			d_{\Qu}(i)
		-
		2
		\sum_{(i \to j) \in C}
			|\Qu(i,j)|.
		\label{eqn:dimension-remove-cycles}
	\end{align}
	By hypothesis, $\dim \A^\diag_\Qu \geq \lfloor n^2/3 \rfloor+1$.
	By the induction hypothesis, $\dim \A^\diag_{\Qu \setminus C} \leq  \lfloor (n-\ell)^2 / 3 \rfloor + 1$.
	Therefore:
	\begin{equation}
		\label{eqn:octoproof-use-indhyp}
		\dim \A^\diag_\Qu
		- \dim \A^\diag_{\Qu \setminus C}
		\geq
		\left\lfloor \frac{n^2}3 \right\rfloor
		-
		\left\lfloor \frac{(n-\ell)^2}3 \right\rfloor
		\geq
		\frac{n^2-2}3
		-
		\frac{n^2-2n\ell + \ell^2}3
		=
		\frac{2n\ell - \ell^2 - 2}3
	\end{equation}
	We clearly have $|\{ i \in V(C) \;\vert\; d_{\Qu}(i) = 1 \}| \leq \ell$.
	If $|\{ i \in V(C) \;\vert\; d_{\Qu}(i) = 1 \}| = \ell$, then $C$ is a union of connected components of~$\Qu$, hence $\Qu = C$ as $\Qu$ is connected (in particular, $C$ is a single cycle in this case), so $\ell = r = n$, but then $\dim \A^\diag_\Qu = n$ is less than $\lfloor n^2/3 \rfloor + 1$ since $n > 2$.
	Therefore, we actually have $|\{ i \in V(C) \;\vert\; d_{\Qu}(i) = 1 \}| \leq \ell - 1$, and so:
	\begin{equation}
		\label{octoproof-bound-isolated}
		|\{
			i \in V(C)
			\;\vert\;
			d_{\Qu}(i) = 1
		\}|
		+
		2
		\sum_{i \in V(C)}
			d_{\Qu}(i)
		-
		2
		\sum_{(i \to j) \in C}
			|\Qu(i,j)|
		\leq
		(\ell - 1)
		+ 2 n
		- 2 \ell
		= 2n - \ell - 1.
	\end{equation}
	Combining \Cref{octoproof-bound-isolated,eqn:octoproof-use-indhyp,eqn:dimension-remove-cycles}, we must then have:
	\[
		2n - \ell - 1
		\geq
		\frac{2n\ell - \ell^2 - 2}3.
	\]
	Multiplying by $3$ and rearranging, this becomes
	\[
		\underbrace{(\ell - 2n)}_{< 0}
		(\ell - 3) \geq 1,
	\]
	which is only possible if $\ell \leq 2$.
	We have thus shown:
	\begin{equation}
		\label{eq:disjoint-union-of-cycles}
		\textnormal{There is no union of vertex-disjoint cycles of~$\Qu$ whose lengths sum to $3$ or more.}
		\tag{C}
	\end{equation}
	Since $\Qu$ is balanced, it can be written as a union of (not necessarily disjoint) cycles.
	By (\ref{eq:disjoint-union-of-cycles}), only $1$-cycles (i.e., loops) and $2$-cycles can occur.
	In particular, for any two vertices $i\neq j$, the number of edges $i\to j$ equals the number of edges $j\to i$.
	We use the notation $i \overset{\alpha}{\longleftrightarrow} j$ as a shortcut for $\alpha$ edges $i \to j$ and~$\alpha$ edges $j \to i$ (this still counts as $2\alpha$ edges!).
	By (\ref{eq:disjoint-union-of-cycles}), there can be at most two vertices with loops.
	We distinguish two cases:
	\begin{description}
		\item[Case 1: There are two vertices $i,j$ with loops.]~\\
			Then, (\ref{eq:disjoint-union-of-cycles}) implies that any $2$-cycle contains both $i$ and $j$.
			As~$\Qu$ is connected, $i$ and $j$ are the only vertices and $\Qu$ looks as follows:
			\[\begin{tikzpicture}
				\node (A) at (0,0) {$i$};
				\node (B) at (1.75,0) {$j$};
				\draw[->] (A) edge[in=145,out=-145,loop] node[left,inner sep=0.4ex] {\scriptsize$\alpha$} (A);
				\draw[->] (B) edge[in=35,out=-35,loop] node[right,inner sep=0.4ex] {\scriptsize$\beta$} (B);
				\draw[<->] (A) edge node[above,inner sep=0.4ex] {\scriptsize$\gamma$} (B);
			\end{tikzpicture}\]
			where~$\alpha + 2 \gamma + \beta = n$ and $\alpha, \beta, \gamma \geq 1$ (this is only possible if $n \geq 4$).
			We have
			\[
				\dim\A^\diag_\Qu =
				2 + (\alpha + \gamma)^2 + (\beta + \gamma)^2 - \alpha^2 - 2 \gamma^2 - \beta^2
				= 2 + 2\gamma(\alpha + \beta)
				= 2 + 2\gamma(n-2\gamma)
				= -4\gamma^2 +2n\gamma + 2.
			\]
			This polynomial in $\gamma$ reaches its real maximum at $\gamma = \frac n4$ with maximal value $\frac {n^2}4 + 2$, which is strictly smaller than $\lfloor n^2/3 \rfloor+1$ as soon as $n \geq 5$, contradicting the hypothesis.
			For $n = 4$, the only possibility is $\alpha=\beta = \gamma=1$, corresponding to the dumbbell quiver.
		\item[Case 2: There is at most one vertex with loops.]~\\
			If there is a vertex $i$ with loops, then all $2$-cycles must contain the vertex $i$ according to (\ref{eq:disjoint-union-of-cycles}).
			Otherwise, (\ref{eq:disjoint-union-of-cycles}) still implies that any two $2$-cycles must share a vertex, and since there cannot be a $3$-cycle, all $2$-cycles then share some common vertex $i$.%
			\footnote{Say $i\leftrightarrow j$ and $i\leftrightarrow k$ are two $2$-cycles sharing a vertex $i$, with $j\neq k$.
			Any $2$-cycle not containing the same vertex~$i$ would need to be $j\leftrightarrow k$.
			But then, $\Qu$ would contain the $3$-cycle $i\to j\to k\to i$, contradicting (\ref{eq:disjoint-union-of-cycles}).}
			Either way, since $\Qu$ is connected, we see that $\Qu$ is of the following form:
			\[\begin{tikzpicture}[scale=1.2]
				\node[shape=circle] (O) at (0,0) {$i$};
				\node (A) at (-15:2.2) {$j_{r-1}$};
				\node (B) at (-50:1.8) {$j_{r-2}$};
				\node (C) at (-130:1.8) {$j_2$};
				\node (D) at (-165:2.2) {$j_1$};
				\node at (-90:1.7) {$\cdots$};
				\draw[<->] (O) edge node[inner sep=0.4ex,above] {\scriptsize$\alpha_{r-1}$} (A);
				\draw[<->] (O) edge node[inner sep=0.4ex,right] {\scriptsize$\alpha_{r-2}$} (B);
				\draw[<->] (O) edge node[inner sep=0.4ex,left] {\scriptsize$\alpha_{2}$} (C);
				\draw[<->] (O) edge node[inner sep=0.4ex,above] {\scriptsize$\alpha_{1}$} (D);
				\draw[->] (O) edge[in=55,out=125,loop] node[inner sep=0.4ex,above] {\scriptsize$\gamma$} (O);
			\end{tikzpicture}\]
			with $\gamma \geq 0$, $\alpha_1 ,\ldots, \alpha_{r-1} \geq 1$, and $\gamma + 2 \sum_i \alpha_i = n$.
			We then have
			\[\textstyle
				\dim \A^\diag_\Qu = r + (n- \sum_i \alpha_i)^2 + \sum_i \alpha_i^2 - (n-2\sum_i \alpha_i)^2 - 2\sum_i \alpha_i^2 = r + 2n \sum_i \alpha_i - 3 (\sum_i\alpha_i)^2 - \sum_i \alpha_i^2.
			\]
			Let $S:=\sum_i \alpha_i$.
			We have $S \geq r-1$ and $\sum_i \alpha_i^2 \geq S$, and thus:
			\[
				\dim \A^\diag_\Qu
				\leq
				(S+1) + 2nS - 3 S^2 - S
				= - 3 S^2 + 2 n S + 1.
			\]
			This upper bound is a quadratic polynomial in $S$ whose real maximum is at $\frac n3$, thus reaching its integer maximum exactly when $S=[\frac n3]$, in which case this evaluates precisely to~$\lfloor n^2/3 \rfloor + 1$.
			As, by hypothesis, we have $\dim \A^\diag_\Qu \geq \lfloor n^2/3 \rfloor + 1$, all inequalities above must be equalities.
			In particular, we have $S = r-1$ so $\alpha_1 = \ldots = \alpha_{r-1} = 1$, and $S = [n/3]$ so $r = S + 1 = [n/3] + 1$.
			The quiver $\Qu$ is then precisely the octopus quiver.
			\qedhere
	\end{description}
\end{proof}

The following subsections are dedicated to describing the irreducible components of~$\A^\diag_\Qu$ for the quivers $\Qu$ maximizing the dimension.

\subsection{The special case $n=2$}
\label{subsn:diag-2}

In the case $n=2$, \Cref{prop:octopus} shows that there are two isomorphism classes of quivers $\Qu \in \Bal_2$ for which $\dim \A^\diag_\Qu$ reaches the maximal value $2$, namely:
\[
	\Oct_2
	=
	\begin{tikzpicture}[baseline={([yshift=-.5ex]current bounding box.center)}]
		\node (A) at (0,0) {$1$};
		\node (B) at (1.5,0) {$2$};
		\draw[->,bend left=15] (A) edge (B);
		\draw[->,bend left=15] (B) edge (A);
	\end{tikzpicture}
	\qquad\qquad\textnormal{and}\qquad\qquad
	\Oct_1 \sqcup \Oct_1
	=
	\begin{tikzpicture}[baseline={([yshift=-.5ex]current bounding box.center)}]
		\node (A) at (0,0) {$1$};
		\node (B) at (1.5,0) {$2$};
		\draw[->] (A) edge[in=145,out=-145,loop] (A);
		\draw[->] (B) edge[in=35,out=-35,loop] (B);
	\end{tikzpicture}
\]

\begin{proposition}
	\label{lem:irreducible-two}
	The two-dimensional set $\A^\diag_{\Oct_2} \sqcup \A^\diag_{\Oct_1 \sqcup \Oct_1}$ has exactly $p^2$ irreducible components, which are all fixed by $\sigma$.
\end{proposition}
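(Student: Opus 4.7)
The plan is to exploit \Cref{prop:characterization-two-by-two}, which writes every $M \in \A$ (for $n=2$) as $\lambda M' + \mu I_2$ with $\lambda, \mu \in \bar\F_p$ and $M' \in \Mat_2(\F_p)$. The key idea is that the two-dimensional irreducible components of the Zariski closure of $\A^\diag_{\Oct_2} \sqcup \A^\diag_{\Oct_1 \sqcup \Oct_1}$ should be indexed by the ``diagonalizable directions'' in $\P(\Mat_2(\F_p)/\F_p I_2) \cong \P^2(\F_p)$.

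I would begin by verifying that every non-scalar $M \in \A$ determines a unique class $[M'] \in \P^2(\F_p)$, which is a direct linear-algebra check. Then, for each $[M'] \in \P^2(\F_p)$, I would introduce the affine plane $P_{[M']} \coloneq \bar\F_p M' + \bar\F_p I_2 \subseteq \Mat_2(\bar\F_p)$, which is two-dimensional, irreducible, $\sigma$-invariant (being spanned over $\bar\F_p$ by the $\F_p$-matrices $M'$ and $I_2$), and contained in $\A$ by \Cref{prop:characterization-two-by-two}. The main observation is then that, since $\lambda M' + \mu I_2$ has the same eigenspaces as $M'$ whenever $\lambda \neq 0$, every non-scalar element of $P_{[M']}$ is diagonalizable over $\bar\F_p$ if and only if $M'$ itself is. Consequently $\A^\diag_{\Oct_2} \sqcup \A^\diag_{\Oct_1 \sqcup \Oct_1}$ is the disjoint union, over diagonalizable directions $[M']$, of $P_{[M']}$ minus the one-dimensional line of scalar matrices.

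Since any two distinct such planes intersect only in the line of scalar matrices, they are distinct two-dimensional irreducible components of the Zariski closure $\bigcup_{[M'] \textnormal{ diag.}} P_{[M']}$ of $\A^\diag_{\Oct_2} \sqcup \A^\diag_{\Oct_1 \sqcup \Oct_1}$, and each is $\sigma$-invariant by construction. It remains to count diagonalizable directions. A direction $[M']$ is non-diagonalizable iff some representative $M' \in \Mat_2(\F_p)$ is non-scalar with a repeated eigenvalue; I would verify that such a repeated eigenvalue always lies in $\F_p$ (a one-line check that also covers $p=2$), so that, modulo a scalar shift, $M'$ is a non-zero nilpotent matrix. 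Modulo $\F_p^\times$-scaling, non-zero $2\times 2$ nilpotent matrices are classified by their one-dimensional image line in $\F_p^2$, yielding $p+1$ non-diagonalizable directions and hence $|\P^2(\F_p)| - (p+1) = p^2$ diagonalizable ones.

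The main potential obstacle is the bookkeeping in this final counting step, specifically ensuring that the identification between non-diagonalizable directions and non-zero nilpotent matrices modulo $\F_p^\times$-scaling is bijective and not accidentally collapsing or overcounting in characteristic~$2$; the rest is essentially immediate from \Cref{prop:characterization-two-by-two} together with the fact that diagonalizability is preserved under the affine maps $M' \mapsto \lambda M' + \mu I_2$ with $\lambda \neq 0$.
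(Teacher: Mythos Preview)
Your argument is correct and takes a genuinely different route from the paper's. The paper works through the general machinery of \Cref{subsn:matrices-and-quivers}: it identifies $X \coloneq \A^\diag_{\Oct_2}\sqcup\A^\diag_{\Oct_1\sqcup\Oct_1}$ as the image of a surjection $Y\times Z\to X$ with finite fibers of size~$2$, where $Y$ is the (irreducible) space of ordered pairs of distinct scalars and $Z$ is the set of ordered pairs $(V_1,V_2)$ of distinct lines in $\bar\F_p^2$ whose \emph{unordered} pair is $\sigma$-invariant. It then counts $|Z|=2p^2$ by invoking \Cref{thm:count-diag-algebras} (the count $p^{n^2-n}$ of diagonalizable subalgebras, specialized to $n=2$), and reads off $p^2$ components of $X$ from the free $\ZZ/2\ZZ$-action.

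Your approach instead exploits the explicit $n=2$ classification of \Cref{prop:characterization-two-by-two}: you parametrize the components directly by the $p^2$ ``diagonalizable directions'' in $\P\bigl(\Mat_2(\F_p)/\F_p I_2\bigr)\cong\P^2(\F_p)$, obtained by removing the $p+1$ nilpotent directions. This is more elementary and entirely self-contained --- it avoids both the $\Y_\Qu\times\Z_\Qu$ formalism and the appeal to \Cref{thm:count-diag-algebras} --- at the cost of being specific to $n=2$ and not fitting into the general quiver framework used for larger~$n$. The paper's approach, by contrast, illustrates in miniature the same mechanism (components of $\Z_\Qu$, finite $\Aut(\Qu)$-quotient) that drives the harder cases in \Cref{subsn:octopus-variety,subsn:diag-dumbbell}. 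Your worry about the characteristic-$2$ bookkeeping is unfounded: the repeated eigenvalue $\alpha$ satisfies $\alpha^2=\det M'\in\F_p$, and perfectness of $\F_p$ gives $\alpha\in\F_p$ directly.
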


\begin{proof}
	We let $X := \A^\diag_{\Oct_2} \sqcup \A^\diag_{\Oct_1 \sqcup \Oct_1}$.
	Note that $Y := \Y_{\Oct_2} = \Y_{\Oct_1 \sqcup \Oct_1}$ is the space of pairs of distinct elements of $\bar\F_p$.
	An element of $Z := \Z_{\Oct_2} \sqcup \Z_{\Oct_1 \sqcup \Oct_1}$ is a pair $(V_1, V_2)$ of distinct one-dimensional subspaces of $\bar\F_p^2$ such that either $\sigma(V_1)=V_2$ and $\sigma(V_2) = V_1$ (for $\Z_{\Oct_2}$), or $\sigma(V_1)=V_1$ and $\sigma(V_2) = V_2$ (for $\Z_{\Oct_1 \sqcup \Oct_1}$); this can be summed up by saying that the unordered pair $\{V_1, V_2\}$ is $\sigma$-invariant.
	We have already counted such unordered pairs in \Cref{thm:count-diag-algebras} (cf.~the bijection of \Cref{lem:param-subalg-geomconj}), so we know that there are $p^{2^2-2} = p^2$ such pairs.
	(In this case, it is easier to distinguish between the two cases, giving $\frac12(p^2-p) + \frac12(p^2+p) = p^2$.)
	Thus, the set $Z$ has size $2p^2$.

	Both quivers have automorphism group $\Aut(\Qu)$ isomorphic to $\ZZ/2\ZZ$, corresponding to the permutation of the vertices $1$ and $2$.
	Thus, the maps of \Cref{eqn:surject-to-xqu} combine into a surjective regular $\sigma$-equivariant map $Y \times Z \twoheadrightarrow X$, whose fibers have size $2$.
	Since $Y$ is irreducible, the space $Y \times Z$ has~$2p^2$ irreducible components, over which $\ZZ/2\ZZ$ acts freely (by swapping coordinates of both pairs), and moreover the $\ZZ/2\ZZ$-orbits are unions of $\sigma$-orbits (they form a single orbit for components coming from $\Z_{\Oct_2}$, and two orbits for components coming from $\Z_{\Oct_1 \sqcup \Oct_1}$).
	This implies that $X$ has $p^2$ irreducible components, all of which are fixed by $\sigma$.
\end{proof}

\subsection{A tool to prove irreducibility}
\label{subsn:tool-irred}

We will repeatedly make use of the following lemma to prove the irreducibility of a variety:

\begin{lemma}
	\label{lem:irreducibility-from-fibers}
	Let $f \colon A \to B$ be a regular map between varieties.
	Assume that $A$ is non-empty and has pure dimension $d$.
	Let $B_1,\dots,B_s$ be locally closed subvarieties of~$B$ with $B = \bigsqcup_{i=1}^s B_i$ and for every $x\in B$, let $F_x$ be a variety such that there is an injective regular map $\varphi_x \colon f^{-1}(x) \hookrightarrow F_x$.
	Assume that $B_1$ is irreducible, that $F_x$ is irreducible for all $x\in B_1$, and that
	\begin{alignat*}{2}
		&
		\forall x \in B_1, \quad &&
		\dim F_x + \dim B_1 \leq d,
		\\
		\forall i \in \{2, \ldots, s\}, \quad &
		\forall x \in B_i, \quad &&
		\dim F_x + \dim B_i < d.
	\end{alignat*}
	Then, $A$ is irreducible.
\end{lemma}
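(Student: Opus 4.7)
The plan is to stratify $A = \bigsqcup_{i=1}^{s} A_i$ with $A_i \coloneq f^{-1}(B_i)$, and to show that the ``main stratum'' $A_1$ is dense in $A$ and itself irreducible; this suffices, since the closure of an irreducible set is irreducible. Since $\varphi_x$ is an injective regular map, we have $\dim f^{-1}(x) \leq \dim F_x$ for every $x \in B$. Combined with the fiber dimension inequality, this gives $\dim A_i \leq \dim B_i + \sup_{x \in B_i} \dim F_x$, which by hypothesis is $<d$ for $i \geq 2$ and $\leq d$ for $i = 1$. In particular, $A \setminus \overline{A_1}$ is an open subset of $A$ contained in $\bigcup_{i \geq 2} A_i$, hence of dimension strictly less than $d$; as $A$ is equidimensional of dimension $d$, this forces $A \setminus \overline{A_1} = \emptyset$. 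Therefore $A_1$ is open dense in $A$ and of pure dimension $d$.

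To prove that $A_1$ is irreducible, I would let $C_1, \ldots, C_r$ be its irreducible components (each of dimension $d$) and derive a contradiction from $r \geq 2$. For each $j$, the image $f(C_j) \subseteq B_1$ is constructible and irreducible; if $\overline{f(C_j)} \subsetneq B_1$, the same fiber dimension bound would give $\dim C_j \leq \dim \overline{f(C_j)} + \sup_x \dim F_x < \dim B_1 + (d - \dim B_1) = d$, contradicting $\dim C_j = d$. So every $f(C_j)$ is dense in $B_1$.

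Given two distinct components $C_1 \neq C_2$, the fiber dimension theorem applied to $f|_{C_i} \colon C_i \to B_1$ yields a dense open $U \subseteq B_1$ such that for every $x \in U$, each $C_i \cap f^{-1}(x)$ is non-empty of dimension exactly $d - \dim B_1$. Since $\dim F_x \leq d - \dim B_1$, this forces $\dim F_x = d - \dim B_1$, and the image $\varphi_x(C_i \cap f^{-1}(x))$ is then a constructible subset of the irreducible variety $F_x$ of top dimension, hence by Chevalley's theorem contains a dense open subset of $F_x$. In an irreducible variety, two dense opens have dense (in particular non-empty) intersection, so by injectivity of $\varphi_x$, the fiber $C_1 \cap C_2 \cap f^{-1}(x)$ has dimension $d - \dim B_1$ for every $x \in U$. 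This yields $\dim(C_1 \cap C_2) \geq \dim B_1 + (d - \dim B_1) = d$, contradicting the fact that $C_1 \cap C_2$ is a proper closed subvariety of the irreducible component $C_1$ of dimension $d$.

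The main obstacle is the last step: the fibers $f^{-1}(x)$ are not themselves assumed irreducible (only injected into the irreducible $F_x$), so one cannot directly invoke the classical statement ``irreducible base with irreducible generic fiber implies irreducible total space.'' The workaround is to exploit the sharp dimensional hypothesis $\dim F_x + \dim B_1 \leq d$, which forces the images $\varphi_x(C_i \cap f^{-1}(x))$ to be of top dimension in $F_x$; combined with the irreducibility of $F_x$ this guarantees that the two images overlap densely, and the injectivity of $\varphi_x$ then transfers this overlap back into $A$ to yield the dimensional contradiction.
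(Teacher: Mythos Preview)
Your proof is correct and follows essentially the same route as the paper's: reduce to the main stratum $A_1$ by showing the other strata have dimension $<d$, show every irreducible component of $A_1$ dominates $B_1$, and then for two components use that the images $\varphi_x(C_i\cap f^{-1}(x))$ each contain a dense open of the irreducible $F_x$, so their intersection is dense, forcing $\dim(C_1\cap C_2)=d$. The only cosmetic differences are that the paper phrases the reduction as ``assume $s=1$'' and concludes with $C=C'$ rather than by contradiction.
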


\begin{proof}
	The assumptions imply that $\dim f^{-1}(B_i) < d$ for $i=2,\dots,s$.
	Hence, the ($d$-dimensional) irreducible components of~$A$ are in bijection with those of~$A\setminus\bigcup_{i=2}^s f^{-1}(B_i)$.
	We can thus assume without loss of generality that $s=1$, hence $B = B_1$.

	Consider any irreducible component $C$ of~$A$.
	For generic $x\in \overline{f(C)}$, we have
	\[
		d = \dim C
		\leq \dim (f^{-1}(x)\cap C) + \dim f(C)
		\leq \dim F_x + \dim B_1
		\leq d,
	\]
	so all inequalities have to be equalities: $\dim F_x+\dim B_1=d$, the set $f(C)$ is dense in $B=B_1$ (recall that $B_1$ is irreducible), and the set  $\varphi_x(f^{-1}(x)\cap C)$ (which is constructible by Chevalley's theorem) is dense in $F_x$ (recall that $F_x$ is irreducible), hence contains a non-empty open subset of $F_x$.
	
	This implies that, for any two irreducible components $C$ and $C'$ of $A$ and for generic $x \in B$, the set $\varphi_x(f^{-1}(x)\cap C \cap C') = \varphi_x(f^{-1}(x)\cap C) \cap \varphi_x(f^{-1}(x)\cap C')$ contains a non-empty open subset of~$F_x$.
	We have shown that the fibers of the restricted map $f_{| C \cap C'} \colon C \cap C' \to B$ generically have dimension $\dim F_x$ (in particular, that restricted map is dominant), so $\dim(C\cap C') = \dim F_x + \dim B_1 = d$, which implies $C=C'$.
\end{proof}

\subsection{The general case (the octopus variety)}
\label{subsn:octopus-variety}

Let $n\geq3$, and let $\Qu$ be the octopus quiver $\Oct_n$ (defined in \Cref{prop:octopus}).
In this subsection, we show that $\Z_\Qu$ and $\A^\diag_\Qu$ are irreducible (\Cref{lem:irreducible-octopus}).
For this purpose, we will need the following stratification of the Grassmannian:

\paragraph{Stratification of the Grassmannian.}
Let $0\leq a\leq n$.
We partition $\Gr_a(\bar \F_p^n)$ as follows
\[
	\Gr_a(\bar\F_p^n)
	=
	\bigsqcup_{0\leq b\leq\min(a,n-a)}
		\T_{a,b},
\]
where~$\T_{a,b}$ is the subset of~$\Gr_a(\bar\F_p^n)$ consisting of those $a$-dimensional subspaces $V \subseteq \bar\F_p^n$ such that $\dim(\sigma(V)+\sigma^{-1}(V)) = a+b$, or equivalently $\dim(\sigma(V)\cap\sigma^{-1}(V)) = a-b$.

\begin{lemma}
	\label{lem:irreducibility-of-T}
	For any $0 \leq b \leq \min(a,n-a)$, the strata $\T_{a,b}$ satisfy the following properties:
	\begin{enumalpha}
		\item
			$\T_{a,b}$ is locally closed.
		\item
			\label{item:irreducibility-of-T-dim}
			$\T_{a,b}$ is non-empty with pure dimension $b(n-b)$.
		\item
			\label{item:irreducibility-of-T-irred}
			If $b > 0$, then $\T_{a,b}$ is irreducible.
	\end{enumalpha}
\end{lemma}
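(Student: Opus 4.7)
The plan for (a) is to rewrite the defining condition by applying $\sigma$ to both sides, yielding $\T_{a,b} = \{V \in \Gr_a(\bar\F_p^n) : \dim(V + \sigma^2(V)) = a+b\}$. This is the preimage of a locally closed subset of $\Gr_a \times \Gr_a$ (by the discussion around \eqref{eq:fixed-intersection-space}) under the regular morphism $V \mapsto (V, \sigma^2(V))$, and hence locally closed.

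For (b), I would adapt the strategy used in \Cref{lem:dim-Z}. Fix $V_0 \in \Gr_a$ and consider the surjective regular map $\GL_n \to \Gr_a$, $E \mapsto E(V_0)$, whose fibers are right cosets of the parabolic $P_a = \mathrm{Stab}(V_0)$ of dimension $n^2 - a(n-a)$. A direct computation yields $V + \sigma^2(V) = E\bigl(V_0 + \bwp(E)(V_0)\bigr)$ with $\bwp(E) = E^{-1}\sigma^2(E)$, so $V \in \T_{a,b}$ if and only if $\bwp(E) \in S_b := \{A \in \GL_n : \dim(V_0 \cap A(V_0)) = a-b\}$. Analyzing $S_b$ via the projection $A \mapsto A_{21}$ onto the irreducible $b(n-b)$-dimensional stratum of $(n-a) \times a$-matrices of rank exactly $b$ (with irreducible affine fibers obtained by completing $A_{21}$ to an invertible matrix), one finds that $S_b$ is non-empty, locally closed, and irreducible of dimension $n^2 - (n-a-b)(a-b)$. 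Since $\bwp$ is étale surjective by \Cref{lem:wp-etale} (with $r = 2$), $\bwp^{-1}(S_b)$ has the same pure dimension, and quotienting by $P_a$ via $E \mapsto E(V_0)$ gives $\T_{a,b}$ non-empty of pure dimension $b(n-b)$.

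For (c), I would aim to apply \Cref{lem:irreducibility-from-fibers}. A promising route is to establish directly that $\bwp^{-1}(S_b) \subseteq \GL_n$ is irreducible when $b \geq 1$: irreducibility of $\T_{a,b}$ would then follow immediately, since $E \mapsto E(V_0)$ is a surjection $\bwp^{-1}(S_b) \twoheadrightarrow \T_{a,b}$ with irreducible fibers (cosets of the connected parabolic $P_a$). Since $\bwp^{-1}(S_b)$ is a Galois étale cover of the irreducible $S_b$ with group $\GL_n(\F_{p^2})$, this reduces to showing that the monodromy is surjective onto $\GL_n(\F_{p^2})$; monodromy is clearly surjective over all of $\GL_n$ (the total space $\bwp^{-1}(\GL_n) = \GL_n$ being irreducible), and the main step is to transfer this surjectivity to the locally closed stratum $S_b$ when $b \geq 1$. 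The main obstacle will be rigorously establishing the monodromy claim; a more hands-on alternative is to apply \Cref{lem:irreducibility-from-fibers} to a map such as $V \mapsto V \cap \sigma^2(V) \in \Gr_{a-b}$, with a further stratification of the target recording the compatibility of this subspace with higher Frobenius iterates, so that on the main stratum the fibers can be parametrized explicitly and shown to be irreducible of the correct dimension.
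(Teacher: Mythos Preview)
Your arguments for (a) and (b) are correct. Your parametrization in (b) differs from the paper's: the paper splits $\bar\F_p^n$ into four blocks $C_1\oplus C_2\oplus C_3\oplus C_4$ of dimensions $b,a-b,b,n-a-b$ and parametrizes pairs $(V,V')$ via $E\mapsto(E(C_1\oplus C_2),E(C_2\oplus C_3))$, whereas you fix a single $V_0$ and read off the condition as $\mathrm{rank}(A_{21})=b$ for $A=\bwp(E)$. Both routes give the pure dimension $b(n-b)$ via \Cref{lem:wp-etale}; yours has the pleasant bonus that $S_b$ is visibly irreducible.

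For (c), however, your primary approach has a genuine gap. The monodromy transfer you want relies on a map $\pi_1^{\textnormal{\'et}}(S_b)\to\pi_1^{\textnormal{\'et}}(\GL_n)$, but $S_b$ is only locally closed in $\GL_n$ (not open) when $b<\min(a,n-a)$, so no such map is available; irreducibility of the Lang cover over all of $\GL_n$ says nothing about its restriction to a proper closed stratum. (Over $S_0$, for instance, the cover is visibly disconnected.) Your acknowledgment that this is the ``main obstacle'' is apt: there is no cheap way to close it.

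The paper instead proves (c) by \emph{downward induction on $a$}. The base case $b=\min(a,n-a)$ is immediate, since $\T_{a,b}$ is then dense open in the irreducible $\Gr_a(\bar\F_p^n)$. For the inductive step one applies \Cref{lem:irreducibility-from-fibers} to the map $f\colon\T_{a,b}\to\Gr_{a+b}(\bar\F_p^n)$, $V\mapsto V+\sigma^2(V)$, stratifying the target as $\bigsqcup_c\T_{a+b,c}$. The fiber over $W\in\T_{a+b,c}$ embeds in the irreducible Grassmannian $\Gr_a(W\cap\sigma^{-2}(W))\cong\Gr_a(\bar\F_p^{a+b-c})$, and a short computation shows that only the stratum $c=\min(b,n-a-b)$ contributes in top dimension; for this $c$ one has $a+b>a$ and $c>0$, so $\T_{a+b,c}$ is irreducible by the inductive hypothesis. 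Your alternative proposal via $V\mapsto V\cap\sigma^2(V)$ is the dual of this and would work just as well, but the key missing ingredient in your sketch is precisely this induction: the irreducibility of the main base stratum is not something one parametrizes explicitly --- it is the statement of the lemma itself at a larger value of $a$.
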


\noindent
(If $n\not\equiv2\pmod3$, we will only need the ``trivial'' special case $b=\min(a,n-a)$ of point~\ref{item:irreducibility-of-T-irred}.)

\begin{proof}
	Let $X$ be the set of pairs $(V,V')$ of $a$-dimensional subspaces of~$\bar\F_p^n$ with $\dim(V+V')=a+b$ (equivalently, $\dim(V\cap V')=a-b$).
	The subset $X \subseteq \Gr_a(\bar\F_p^n)\times\Gr_a(\bar\F_p^n)$ is locally closed, cf.\ \Cref{eq:fixed-intersection-space}.

	\begin{enumalpha}
		\item
			Since $\dim(\sigma(V)+\sigma^{-1}(V)) = \dim(\sigma^2(V)+V)$, the set $\T_{a,b}$ is locally closed as the pullback of~$X$ under the regular map $V\mapsto(V,\sigma^2(V))$.
		\item
			We use a similar strategy as in the proof of \Cref{lem:dim-Z}.
			First note that $\T_{a,b}$ is isomorphic to the variety $\tilde\T_{a,b}$ of those pairs $(V,V')\in X$ satisfying $\sigma^2(V) = V'$.

			Let $C_1 := \bar\F_p^b$, $C_2 := \bar\F_p^{a-b}$, $C_3 := \bar\F_p^b$, $C_4 := \bar\F_p^{n-a-b}$, and $C := C_1\oplus C_2\oplus C_3\oplus C_4$.
			We parametrize pairs $(V,V')\in X$ via the regular map
			\[
				f \colon \Isom(C, \bar\F_p^n) \to X,
				\qquad
				E \mapsto
				\bigl(
					E(C_1\oplus C_2),
					E(C_2\oplus C_3)
				\bigr).
			\]
			This map is surjective and its fibers are isomorphic to the variety
			\begin{align*}
				F :={}& \{E\in\GL(C) \mid E(C_1\oplus C_2) = C_1\oplus C_2\textnormal{ and }E(C_2\oplus C_3) = C_2\oplus C_3\} \\
				={}& \{E\in\GL(C) \mid E(C_1)\subseteq C_1\oplus C_2\textnormal{ and }E(C_2)=C_2\textnormal{ and }E(C_3)\subseteq C_2\oplus C_3\}
			\end{align*}
			of dimension
			\begin{align*}
				\dim F
				&= \dim C_1\cdot\dim(C_1\oplus C_2) + (\dim C_2)^2 + \dim C_3\cdot\dim(C_2\oplus C_3) + \dim C_4\cdot\dim C \\
				&= ba + (a-b)^2 + ba + (n-a-b)n \\
				&= a^2+(n-a)n-b(n-b).
			\end{align*}
			(That a generic linear endomorphism $E \colon C \to C$ such that $E(C_1) \subseteq C_1 \oplus C_2$, $E(C_2) \subseteq C_2$ and $E(C_3) \subseteq C_2 \oplus C_3$ satisfies $E(C_2) = C_2$ and is invertible follows from the fact that $F$ is Zariski open in the vector space of such endomorphisms, and is non-empty as it contains the identity.)

			Let $E \in \Isom(C, \bar\F_p^n)$ and let $(V,V') = f(E)$.
			We have $\sigma^2(V) = V'$ if and only if the automorphism $\bwp(E):= E^{-1}\sigma^2(E) \in \GL(C)$ (with $\sigma^2(E)$ defined analogously to $\sigma(E)$ in the proof of \Cref{lem:dim-Z}) lies in the irreducible variety
			\[
				S := \{A\in\GL(C) \mid A(C_1\oplus C_2) = C_2\oplus C_3\}
			\]
			of dimension
			\begin{align*}
				\dim S
				&= \dim(C_1\oplus C_2)\cdot\dim(C_2\oplus C_3) + \dim(C_3\oplus C_4)\cdot\dim C
				= a^2 + (n-a)n.
			\end{align*}
			(As above, generic invertibility comes from the fact that $S$ is non-empty, as it contains the invertible map $C_1\oplus C_2\oplus C_3\oplus C_4 \to C_1\oplus C_2\oplus C_3\oplus C_4, (x,y,z,w) \mapsto (z,y,x,w)$.)

			By \Cref{lem:wp-etale}, $\bwp^{-1}(S)$ is non-empty of pure dimension $\dim S = a^2 + (n-a)n$.
			In particular, $\T_{a,b} \simeq \tilde \T_{a,b} = f(\bwp^{-1}(S))$ is non-empty of pure dimension
			\[
				\dim \bwp^{-1}(S) - \dim F
				=
				a^2 + (n-a)n - a^2 - (n-a)n + b(n-b)
				=
				b(n-b).
			\]
		\item
			We use downward induction on $a$.
			(The case $a=n$ is vacuous.)
			The case $b=\min(a,n-a)$ is clear since by~\ref{item:irreducibility-of-T-dim}, $\T_{a,b}$ is then a subvariety of dimension $a(n-a)$ of the irreducible variety~$\Gr_a(\bar\F_p^n)$ of dimension $a(n-a)$, hence it is dense, hence itself irreducible.
			We can therefore assume that $0<b<\min(a,n-a)$.

			We are going to apply \Cref{lem:irreducibility-from-fibers} to the regular map
			\[
				f
				\colon
				\T_{a,b}
				\to
				\bigsqcup_{0\leq c\leq\min(a+b, \, n-a-b)}
					\T_{a+b,c}
				=
				\Gr_{a+b}(\bar\F_p^n)
			\]
			sending $V$ to $W:=\sigma^2(V)+V$.
			For any $W\in \T_{a+b,c}$, the fiber $f^{-1}(W)$ is contained in the set of $a$-dimensional subspaces $V$ of the $(a+b-c)$-dimensional vector space $W\cap\sigma^{-2}(W)$.
			In particular, the fiber is empty unless $a\leq a+b-c$, so $c\leq b$.

			Let $0\leq c\leq\min(b, \, n-a-b)$ and $W \in \T_{a+b,c}$.
			The fiber $f^{-1}(W)$ embeds into the irreducible variety $\Gr_a(W\cap\sigma^{-2}(W))\simeq\Gr_a(\bar\F_p^{a+b-c})$ and we have
			\begin{align*}
			&\dim \T_{a,b} - \dim \Gr_a(\bar\F_p^{a+b-c}) - \dim \T_{a+b,c} \\
			\stackrel{\mathclap{\textnormal{\ref{item:irreducibility-of-T-dim}}}}{=}{}& b(n-b) - a(b-c) - c(n-c) \\
			={}& (b-c)(n-a-b-c).
			\end{align*}
			The right-hand side is positive for all $c$ except $c = \min(b,n-a-b)$, for which it is zero.
			For this value of~$c$, the assumption $0<b<\min(a,n-a)$ implies that $a+b>a$ and $c>0$, so~$\T_{a+b,c}$ is irreducible by the induction hypothesis.

			The claim follows by applying \Cref{lem:irreducibility-from-fibers} to the regular map $f$.
			\qedhere
	\end{enumalpha}
\end{proof}

\begin{remark}
	For $b=0$, the variety $\T_{a,0}$ consists of those $a$-dimensional subspaces $V \subseteq \bar \F_p^n$ such that $\sigma(V)=\sigma^{-1}(V)$, or, equivalently, of the finitely many $a$-dimensional subspaces of~$\bar\F_p^n$ defined over~$\F_{p^2}$.
	In particular, $\T_{a,0}$ is not irreducible unless $a\in\{0,n\}$.
\end{remark}

\begin{proposition}
	\label{lem:irreducible-octopus}
	Let $n\geq3$ and $k=[n/3]$.
	Let $\Qu = \Oct_n$ be the octopus quiver with $k+1$~vertices and~$n$~edges.
	Then, the sets $\Z_{\Qu}$ and $\A^\diag_\Qu$ are irreducible.
\end{proposition}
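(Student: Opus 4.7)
The plan is to reduce irreducibility of $\A^\diag_\Qu$ to that of $\Z_\Qu$: the regular surjection $\Y_\Qu\times\Z_\Qu\twoheadrightarrow\A^\diag_\Qu$ of \Cref{eqn:surject-to-xqu} has irreducible source (as $\Y_\Qu$ is a non-empty Zariski open subvariety of affine $(k+1)$-space), so it suffices to prove that $\Z_\Qu$ is irreducible. For this, I would apply \Cref{lem:irreducibility-from-fibers} to the forgetful map
\[
	f \colon \Z_\Qu \longrightarrow \Gr_{n-k}(\bar\F_p^n),
	\qquad
	(V_0, V_1, \ldots, V_k) \longmapsto V_0,
\]
and stratify the target by the joint values of $b$ (where $V_0\in\T_{n-k,b}$), $c = \dim(V_0\cap\sigma(V_0))$, and $p = \dim(V_0\cap\sigma(V_0)\cap\sigma^{-1}(V_0))$.

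For $V_0$ in the image of $f$, the quiver forces $c = n-2k$; writing $q = n - k - b = \dim(\sigma(V_0)\cap\sigma^{-1}(V_0))$, the fiber over $V_0$ parametrizes ordered $k$-tuples of lines in direct sum inside $\sigma(V_0)\cap\sigma^{-1}(V_0)$ whose span avoids the $p$-dimensional subspace $V_0\cap\sigma(V_0)\cap\sigma^{-1}(V_0)$. The remaining quiver conditions $V_i\cap\sigma(V_j) = 0$ (for $i\neq j$, or $i = j\geq 1$) are automatic on the open locus where the $4$-fold intersection $V_0\cap\sigma(V_0)\cap\sigma^{-1}(V_0)\cap\sigma^2(V_0)$ vanishes, since every such $V_i\cap\sigma(V_j)$ is contained in that intersection. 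On this locus the fiber is therefore a non-empty open subset of $\P(\sigma(V_0)\cap\sigma^{-1}(V_0))^k$, hence irreducible of dimension $k(q-1)$. I would take as the \emph{main stratum} $B_1$ the open subset of $\T_{n-k,b_{\max}}$ where $c = n - 2k$, where $p$ equals its generic minimum, and where the $4$-fold intersection vanishes, with $b_{\max} = \min(k, n-2k)$ chosen as the largest $b$ for which $q_{\max} = n-k-b_{\max}\geq k$ (so that fibers are non-empty). A quick check on $n\bmod 3$ gives $b_{\max}\geq 1$ for $n\geq 3$, hence $\T_{n-k,b_{\max}}$ is irreducible by \iref{lem:irreducibility-of-T}{item:irreducibility-of-T-irred}, and so is $B_1$. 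A direct computation then confirms
\[
	\dim B_1 + k(q_{\max}-1)
	= b_{\max}(n-b_{\max}) + k(n-k-b_{\max}-1)
	= \dim\Z_\Qu
\]
in each residue class, agreeing with \iref{lem:dim-Z}{item:dim-Z-normal}.

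The remaining strata require strict inequalities: when $b > b_{\max}$ the fibers are empty (as $q < k$); when $b < b_{\max}$ the total dimension is the value of the downward-opening quadratic $g(b) = -b^2 + b(n-k) + k(n-k-1)$, which attains its integer maximum over $\{0,\ldots,b_{\max}\}$ uniquely at $b_{\max}$ (its vertex $(n-k)/2$ sitting at or just above $b_{\max}$ in each case); and proper closed sub-strata of $\T_{n-k,b_{\max}}$ where $p$ or the $4$-fold intersection is larger than generic are strictly lower-dimensional, with no compensating gain in the fiber dimension. \Cref{lem:irreducibility-from-fibers} then delivers the irreducibility of $\Z_\Qu$. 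The main obstacle will be to set up the joint stratification cleanly and carry out the three case-by-case dimension verifications in parallel; the case $n \equiv 2 \pmod 3$ is slightly subtler since $B_1$ then lies in the codimension-$k$ sub-stratum $\T_{n-k,k-1}$ of $\Gr_{n-k}(\bar\F_p^n)$ rather than in its open stratum $\T_{n-k,k}$.
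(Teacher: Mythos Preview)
Your approach is essentially the paper's: apply \Cref{lem:irreducibility-from-fibers} to the projection $(V_0,\dots,V_k)\mapsto V_0$, stratify the target by the $\T_{n-k,b}$, and identify the main stratum as $b_{\max}=\min(k,n-2k)$. Your quadratic $g(b)=-b^2+b(n-k)+k(n-k-1)$ is exactly the paper's computation (the paper writes the defect as $(n-2k-b)(k-b)$), and your observation that the vertex $(n-k)/2$ sits at or just above $b_{\max}$ in every residue class is the same case check.

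Where you diverge is in introducing the extra parameters $c$, $p$, and the $4$-fold intersection, in order to pin down the fiber over the main stratum as a \emph{non-empty open subset} of $\bigl(\P(\sigma(V_0)\cap\sigma^{-1}(V_0))\bigr)^k$. This is harmless but unnecessary, and it creates work you have not yet done: you must verify that your candidate $B_1$ is non-empty (equivalently, that the generic $V_0$ in $\T_{n-k,b_{\max}}$ satisfies $c=n-2k$, has the required value of $p$, and has vanishing $4$-fold intersection). This is not automatic, especially in the case $n\equiv 2\pmod 3$ where $\T_{n-k,b_{\max}}$ is a proper closed stratum of the Grassmannian.

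The paper sidesteps all of this by exploiting the asymmetry in \Cref{lem:irreducibility-from-fibers}: one only needs an \emph{injective regular map} $f^{-1}(V_0)\hookrightarrow F_{V_0}$ with $F_{V_0}$ irreducible of the correct dimension, not an identification of the fiber as an open subset. Taking $F_{V_0}=\bigl(\P(\bar\F_p^{n-k-b})\bigr)^k$ for every $V_0\in\T_{n-k,b}$ (empty fibers embed trivially), the conditions on $c$, $p$, and the $4$-fold intersection simply never enter. Your dimension counts for $b\neq b_{\max}$ then finish the argument immediately. I recommend dropping the extra stratification and using the bare embedding; you will find that what you flagged as ``the main obstacle'' evaporates.
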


\[\begin{tikzpicture}
	\node (O) at (0,0) {$0$};
	\node (A) at (-15:2.2) {$k$};
	\node (B) at (-50:1.8) {$k-1$};
	\node (C) at (-130:1.8) {$2$};
	\node (D) at (-165:2.2) {$1$};
	\node at (-90:1.7) {$\cdots$};
	\draw[->,bend left=7] (O) edge (A);
	\draw[<-,bend right=7] (O) edge (A);
	\draw[->,bend left=10] (O) edge (B);
	\draw[<-,bend right=10] (O) edge (B);
	\draw[->,bend left=10] (O) edge (C);
	\draw[<-,bend right=10] (O) edge (C);
	\draw[->,bend left=7] (O) edge (D);
	\draw[<-,bend right=7] (O) edge (D);
	\draw[->] (O) edge[in=55,out=125,loop] node[above,inner sep=0.4ex] {\scriptsize$n-2k$} (O);
\end{tikzpicture}\]

\begin{proof}
By \iref{lem:dim-Z}{item:dim-Z-normal}, we have
\[
	\dim \Z_\Qu
	= \sum_i d_{\Qu}(i)^2 - \sum_{i,j} |\Qu(i,j)|^2
	= (n-k)^2 + k - (n-2k)^2 - 2k
	= k(2n-3k-1).
\]

Points in $\Z_{\Qu}$ correspond to tuples $(V_0,V_1,\dots,V_k)$ of subspaces of~$\bar\F_p^n$ of respective dimensions $n-k,1,\dots,1$ together spanning $\bar\F_p^n$ such that $\dim(V_0\cap\sigma(V_0)) = n-2k$ and $V_1,\dots,V_k \subseteq \sigma(V_0)\cap\sigma^{-1}(V_0)$ and $V_i\neq\sigma(V_j)$ for all $i,j\in\{1,\dots,k\}$.
We are going to apply \Cref{lem:irreducibility-from-fibers} to the regular map
\[
	f \colon
	\Z_{\Qu} \to
	\bigsqcup_{0\leq b\leq\min(n-k,k)}
		\T_{n-k,b}
	= \Gr_{n-k}(\bar\F_p^n)
\]
sending $(V_0,V_1,\dots,V_k)$ to $V_0$.

Let $0\leq b\leq\min(n-k,k)$ and consider an arbitrary $V_0\in \T_{n-k,b}$.
The fiber $f^{-1}(V_0)$ consists of tuples $(V_1,\dots,V_k)$ of linearly independent one-dimensional subspaces of the $(n-k-b)$-dimensional vector space $\sigma(V_0)\cap\sigma^{-1}(V_0)$.
In particular, the fiber is empty unless $k\leq n-k-b$, i.e., $b\leq n-2k$.
We now assume that $b\leq n-2k$.
The fiber $f^{-1}(V_0)$ embeds into the irreducible variety $\Bigl(\P\bigl(\sigma(V_0)\cap\sigma^{-1}(V_0)\bigr)\Bigr)^k \simeq \Bigl(\P(\bar\F_p^{n-k-b})\Bigr)^k$, and by \iref{lem:irreducibility-of-T}{item:irreducibility-of-T-dim} we have
\begin{align*}
& \dim \Z_\Qu - \dim \Bigl(\P(\bar\F_p^{n-k-b})\Bigr)^k - \dim \T_{n-k,b} \\
={}& k(2n-3k-1) - k(n-k-b-1) - b(n-b) \\
={}& (n-2k-b)(k-b).
\end{align*}
The right-hand side is positive for all $b$ except $b = \min(n-2k,k)$, for which it is zero.
For this value of~$b$, the assumption $n\geq3$ together with the definition $k=[n/3]$ imply that $b>0$, so $\T_{n-k,b}$ is irreducible by \iref{lem:irreducibility-of-T}{item:irreducibility-of-T-irred}.
By \Cref{lem:irreducibility-from-fibers}, the variety $\Z_\Qu$ is irreducible.
Since $\Y_{\Qu}$ and $\Z_{\Qu}$ are irreducible, so is their product and therefore so is the image $\A^\diag_\Qu$.
\end{proof}

\subsection{The special case $n=4$ (the dumbbell variety)}
\label{subsn:diag-dumbbell}

When $n=4$, \Cref{prop:octopus} shows that there are two isomorphism classes of quivers $\Qu \in \Bal_4$ such that $\dim \A^\diag_\Qu$ reaches the maximal value $6$, namely the octopus quiver $\Oct_4$ (for which $\A^\diag_{\Oct_4}$ is irreducible by \Cref{lem:irreducible-octopus}), and the dumbbell quiver $\Qu$:
\[\begin{tikzpicture}
	\node (A) at (0,0) {$1$};
	\node (B) at (1.5,0) {$2$};
	\draw[->] (A) edge[in=145,out=-145,loop] (A);
	\draw[->] (B) edge[in=35,out=-35,loop] (B);
	\draw[->,bend left=15] (A) edge (B);
	\draw[->,bend left=15] (B) edge (A);
\end{tikzpicture}\]
The goal of this subsection is to prove:
\begin{proposition}
	\label{lem:irreducible-dumbbell}
	When $\Qu$ is the dumbbell quiver, the sets $\Z_\Qu$ and $\A^\diag_\Qu$ are irreducible.
\end{proposition}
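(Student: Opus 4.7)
The strategy is to mirror the approach of \Cref{lem:irreducible-octopus}: apply \Cref{lem:irreducibility-from-fibers} to the projection $f \colon \Z_\Qu \to \Gr_2(\bar\F_p^4)$ defined by $f(V_1, V_2) = V_1$, suitably stratifying the target. Since $\dim(V_1 \cap \sigma(V_1)) = 1$ for every $(V_1, V_2) \in \Z_\Qu$, the image of $f$ lies in the locus $G \coloneq \{V \in \Gr_2(\bar\F_p^4) : \dim(V \cap \sigma(V)) = 1\}$. I would stratify $G = G_{\mathrm{gen}} \sqcup G_{\mathrm{sp}}$, where $G_{\mathrm{gen}}$ is the open subset on which $L \coloneq V \cap \sigma(V)$ is \emph{not} defined over $\F_p$.

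First I would check that $G_{\mathrm{gen}}$ is irreducible of dimension $3$: for $L \in \P^3 \setminus \P^3(\F_p)$, the inclusion $L \subseteq V \cap \sigma(V)$ forces $\sigma^{-1}(L) \subseteq V$, so that $V = L + \sigma^{-1}(L)$ is uniquely determined, and for generic such $L$ (i.e., such that $L$, $\sigma(L)$, $\sigma^{-1}(L)$ are in general position) the resulting $V$ lies in $G_{\mathrm{gen}}$. This gives an isomorphism between an open subvariety of $\P^3 \setminus \P^3(\F_p)$ and $G_{\mathrm{gen}}$, making the latter irreducible. The complement $G_{\mathrm{sp}}$ is the union of $\pbinom{4}{1} = p^3+p^2+p+1$ pieces (indexed by $\F_p$-rational lines $L$) each of dimension $2$, so $\dim G_{\mathrm{sp}} = 2$. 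A short dimension count on the choices of $V_2$ shows that $\dim f^{-1}(V_1) \leq 1$ for every $V_1 \in G_{\mathrm{sp}}$, hence $\dim f^{-1}(G_{\mathrm{sp}}) \leq 3 < 4 = \dim \Z_\Qu$, so this stratum contributes no top-dimensional component.

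The crucial step is then the analysis of the fiber over $V_1 \in G_{\mathrm{gen}}$. By the symmetric role of $V_1$ and $V_2$ in the dumbbell, one parametrizes $V_2$ via $L_{22} \coloneq V_2 \cap \sigma(V_2)$: generically $L_{22}$ is not $\sigma$-fixed and $V_2 = L_{22} + \sigma^{-1}(L_{22})$. The two off-diagonal conditions in the definition of $\Z_\Qu$ ($V_1 \cap \sigma(V_2) \ne 0$ and $V_2 \cap \sigma(V_1) \ne 0$) then translate into the two polynomial equations
\[
	\det\bigl(L_{11}, \sigma^{-1}(L_{11}), L_{22}, \sigma(L_{22})\bigr) = 0
	\quad\textnormal{and}\quad
	\det\bigl(L_{11}, \sigma(L_{11}), L_{22}, \sigma^{-1}(L_{22})\bigr) = 0,
\]
each of bidegree $(1,p)$ in the coordinates of $L_{22}$. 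The main technical task is to show that, for generic $L_{11}$, these two hypersurfaces intersect in an irreducible curve in $\P^3$; this would identify the fiber with an irreducible $1$-dimensional variety, making \Cref{lem:irreducibility-from-fibers} applicable.

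To handle this, I would work in the basis $\bigl(L_{11}, \sigma^{-1}(L_{11}), \sigma(L_{11}), \sigma^2(L_{11})\bigr)$ of $\bar\F_p^4$ (a basis for generic $L_{11}$), expand the two determinants in the resulting affine coordinates, and substitute $t \mapsto t^p$ to clear the fractional Frobenius twists coming from $\sigma^{-1}(L_{22})$. This would reduce irreducibility to the absolute irreducibility of an explicit bivariate polynomial over $\bar\F_p$, which can be verified by inspecting its Newton polytope and leading-term structure. The irreducibility of $\A^\diag_\Qu$ then follows from that of $\Z_\Qu$, since $\A^\diag_\Qu$ is the image of the irreducible variety $\Y_\Qu \times \Z_\Qu$ (with $\Y_\Qu$ an open subset of $\bar\F_p^2$) under the finite regular map (\ref{eqn:surject-to-xqu}). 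The hard part is the explicit irreducibility check for the fiber curve, since the two bidegree-$(1,p)$ equations intricately couple $L_{22}$ with its Frobenius and inverse-Frobenius twists.
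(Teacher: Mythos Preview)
Your overall strategy---strip off small-dimensional loci where $L_{11}$ or $L_{22}$ is $\sigma$-fixed, fibre over the generic stratum, and invoke \Cref{lem:irreducibility-from-fibers}---is sound and parallels the paper's argument. The paper, however, does \emph{not} project to $V_1$: it introduces the auxiliary line $M(V) = W(V)\cap\sigma(W(V))\cap\sigma^2(W(V))$ with $W(V)=V_1+\sigma(V_2)$, and fibres $\Z_\Qu$ (after removing small loci) over $\P(\bar\F_p^4)$ via $V\mapsto M(V)$. Over a generic $M=\langle m\rangle$ both $L_1(V)$ and $L_2(V)$ lie in the fixed $2$-plane $M+\sigma^{-1}(M)$ (resp.\ its $\sigma^{-1}$-image), so the fibre embeds in $\P^1\times\P^1$ and is cut out by a \emph{single} bihomogeneous equation. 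After specializing one coefficient to $0$, that equation has a Newton polygon one of whose edges is the segment $[(0,0),(p+1,p)]$, which contains no interior lattice points; this is what drives the paper's irreducibility proof.

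Your fibration leads to a genuinely different fibre curve, and here your proposal has a gap. Carrying out your computation in the basis $(L_{11},\sigma^{-1}(L_{11}),\sigma(L_{11}),\sigma^2(L_{11}))$, rewriting the condition $V_2\cap\sigma(V_1)\neq0$ as $\sigma(V_2)\cap\sigma^2(V_1)\neq0$ to avoid $\sigma^{-1}(L_{22})$, and eliminating the coordinate that appears linearly, the fibre over a fixed $V_1$ becomes the affine plane curve
\[
u^{p+1}+\alpha' u \;=\; \beta'\bigl(c^{p+1}+\delta' c-\gamma'\bigr)
\]
for constants $\alpha',\beta',\gamma',\delta'$ depending on $L_{11}$ (here $u=b^p$ in your notation). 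The Newton polytope of this polynomial is the triangle with vertices $(0,0)$, $(p+1,0)$, $(0,p+1)$, i.e.\ the $(p{+}1)$-fold dilate of the standard simplex---and this \emph{is} a non-trivial Minkowski sum. So the polytope-indecomposability criterion you invoke does not apply; nor does any evident ``leading-term'' refinement, since the edge polynomial $u^{p+1}-\beta' c^{p+1}$ along the hypotenuse splits completely into linear factors. The curve \emph{is} irreducible for generic parameters (e.g.\ specialize $\alpha'=\delta'=0$ and use Kummer theory for $u^{p+1}-(c^{p+1}-\gamma')$, then invoke openness of the geometrically-irreducible locus), but you must supply such an argument in place of the polytope check. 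The paper's choice of the less obvious fibration $V\mapsto M(V)$ is engineered precisely so that the resulting Newton polygon acquires a primitive edge and the combinatorial argument goes through.
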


By \iref{lem:dim-Z}{item:dim-Z-normal}, the set $\Z_\Qu$ has pure dimension $4$.
The points of~$\Z_\Qu$ correspond to pairs $V = (V_1,V_2)$ of two-dimensional subspaces of~$\bar\F_p^4$ such that $V_1 \oplus V_2 = \bar\F_p^4$ and $\dim(V_i\cap\sigma(V_j)) = 1$ for each $i,j \in \{1,2\}$.
For any $V = (V_1, V_2) \in \Z_\Qu$, define the one-dimensional vector spaces
\[
	L_1(V) := V_1\cap\sigma(V_1)
	\qquad\textnormal{and}\qquad
	L_2(V) := V_2\cap\sigma(V_2),
\]
the three-dimensional vector space
\[
	W(V) := V_1 + \sigma(V_2),
\]
and the vector spaces
\[
	U(V) := W(V) \cap \sigma(W(V)),
\]
and
\[
	M(V) := U(V)\cap\sigma(U(V)) = W(V)\cap\sigma(W(V))\cap\sigma^2(W(V)).
\]

Since $W(V)$ has codimension $1$ in $\bar\F_p^4$, we have $\dim U(V) \geq 2$ and $\dim M(V) \geq 1$.
The space~$W(V)$ is not defined over~$\F_p$ as otherwise we would have $V_1+V_2\subseteq W(V)\subsetneq\bar\F_p^4$.
This implies that $U(V) \subsetneq W(V)$ is two-dimensional.

Note that
\begin{equation}
	\label{eq:Li_in_U}
	L_1(V) \subseteq U(V)
	\qquad\textnormal{and}\qquad
	L_2(V) \subseteq \sigma^{-1}(U(V)).
\end{equation}

\paragraph{Strategy.}

Our strategy of proof for \Cref{lem:irreducible-dumbbell} is as follows: we show that for a generic element~$V$ of any irreducible component of~$\Z_\Qu$, none of the subspaces $L_1(V)$, $L_2(V)$, $U(V)$, $M(V)$ are defined over~$\F_p$.
Disregarding those ``exceptional'' $V$ for which any of these subspaces are defined over~$\F_p$, we show that $M(V)$ is one-dimensional, and that the fibers of the map $V \mapsto M(V)$ embed into one-dimensional subvarieties of~$\P^1(\bar\F_p)\times\P^1(\bar\F_p)$.
Using Newton polygons, we show that these one-dimensional varieties are generically irreducible.
Finally, we conclude using \Cref{lem:irreducibility-from-fibers}.

\begin{lemma}
	\label{lem:dumbbell-lambda}
	Consider the regular map
	\[
		\lambda \colon \Z_\Qu \to \P(\bar\F_p^4)\times\P(\bar\F_p^4),\qquad
		V \mapsto \bigl(L_1(V),L_2(V)\bigr).
	\]
	Let $F_\lambda$ be the closed subset of~$\P(\bar\F_p^4) \times \P(\bar\F_p^4)$ corresponding to pairs $(L_1, L_2)$ such that at least one of~$L_1$ or~$L_2$ is defined over~$\F_p$, and let $\Z_\Qu' := \Z_\Qu \setminus \lambda^{-1}(F_\lambda)$.
	Then:
	\begin{enumalpha}
	\item
		\label{item:dumbbell-lambda-1}
		The closed subset $\lambda^{-1}(F_\lambda)$ of~$\Z_\Qu$ is at most three-dimensional.
	\item
		\label{item:dumbbell-lambda-2}
		For any $V\in \Z_\Qu'$, we have:
		\begin{enumroman}
		\item\label{item:dumbbell-lambda-2-Vi}
			$
				V_i = L_i(V) \oplus \sigma^{-1}(L_i(V))
			$
			for each $i\in\{1,2\}$.
		\item\label{item:dumbbell-lambda-2-U}
			$
				U(V) + \sigma^{-1}(U(V)) + \sigma^{-2}(U(V))
				= \bar\F_p^4
			$.
		\item\label{item:dumbbell-lambda-2-M}
			The vector space $M(V)$ is one-dimensional.
		\end{enumroman}
	\end{enumalpha}
\end{lemma}
\begin{proof}
	~
	\begin{enumalpha}
		\item
			As both cases are symmetric, we can focus on the preimage of the space of pairs where~$L_1$ is defined over~$\F_p$.
			By \iref{lem:dim-Z}{item:dim-Z-special}, this preimage has dimension strictly less than $\dim\Z_\Qu = 4$.
		\item
			\begin{enumroman}
			\item
				By definition, $V_i \supseteq L_i(V)+\sigma^{-1}(L_i(V))$.
				By hypothesis, $L_i(V)$ is a one-dimensional space not defined over~$\F_p$, so it has trivial intersection with $\sigma^{-1}(L_i(V))$, so the right-hand side is a direct sum and has dimension $2 = \dim V_i$, so the inclusion is an equality.
			\item
				Combining \ref{item:dumbbell-lambda-2-Vi} with \Cref{eq:Li_in_U}, we obtain $V_1+V_2 \subseteq U(V)+\sigma^{-1}(U(V))+\sigma^{-2}(U(V))$.
				The left-hand side is $\bar\F_p^4$ since $V\in\Z_\Qu$.
			\item
				From \ref{item:dumbbell-lambda-2-Vi}, we see that the two-dimensional vector space $U(V)$ is not defined over $\F_p$.
				Thus, the vector space $M(V) = U(V)\cap\sigma(U(V))$ is at most one-dimensional, but it is also at least one-dimensional since it equals $W(V)\cap\sigma(W(V))\cap\sigma^2(W(V))$ and $\dim W(V) = 3$.
				\qedhere
			\end{enumroman}
	\end{enumalpha}
\end{proof}

Consider the regular map
\[
	\upsilon \colon \Z_\Qu' \to \Gr_2(\bar\F_p^4),\qquad
	V \mapsto U(V).
\]
If $U = \langle v,u\rangle$ is any two-dimensional subspace of $\bar\F_p^4$, then \Cref{eq:Li_in_U} shows that there is a regular map
\[
	\varphi_{v,u} \colon \upsilon^{-1}(U) \to \P^1(\bar\F_p)\times\P^1(\bar\F_p),\qquad
	V \mapsto \bigl([r_1:s_1],[r_2:s_2]\bigr)
\]
uniquely characterized by
\begin{align}\label{eq:Li_from_rs}
	L_1(V) &= \langle r_1 v + s_1 u \rangle
	&\textnormal{and}&&
	L_2(V) &= \langle r_2 \sigma^{-1}(v) + s_2 \sigma^{-1}(u) \rangle.
\end{align}
This map $\varphi_{v,u}$ is injective as $V_i = L_i(V) \oplus \sigma^{-1}(L_i(V))$ by \iiref{lem:dumbbell-lambda}{item:dumbbell-lambda-2}{item:dumbbell-lambda-2-Vi}.

Let $S$ be the (dense open) subset of~$\bar\F_p^4$ consisting of those $m\in\bar\F_p^4$ for which the vectors $\sigma^i(m)$ for $i=0,\dots,3$ are linearly independent, and let $g \colon S \to \bar\F_p^4$ be the map sending $m$ to the unique tuple $(c_0,\dots,c_3) \in \bar\F_p^4$ satisfying $\sigma^4(m) = \sum_{i=0}^3 c_i \sigma^i(m)$.
The map $g$ is regular by Cramer's rule.
Finally, for any $\underline c = (c_0,\dots,c_3)\in \bar\F_p^4$, define the following (one-dimensional) closed subset $D_{\underline c} \subseteq \P^1(\bar\F_p) \times \P^1(\bar\F_p)$:
\begin{equation}
	\label{def:dc}
	D_{\underline c} :=
	\Bigl\{
		\bigl(
			[r_1:s_1],[r_2:s_2]
		\bigr)
		\;\Big\vert\;
		- c_0 r_1^{p+1}r_2^{p+1} + c_1 r_1^{p+1}r_2^ps_2 - c_2 r_1^{p+1}s_2^{p+1} + c_3 r_1^ps_1s_2^{p+1} + s_1^{p+1}s_2^{p+1} = 0
	\Bigr\}
\end{equation}

\begin{lemma}
	\label{lem:dumbbell-mu}
	Consider the regular map (see \iiref{lem:dumbbell-lambda}{item:dumbbell-lambda-2}{item:dumbbell-lambda-2-M})
	\[
		\mu \colon \Z_\Qu' \to \P(\bar\F_p^4),\qquad
		V \mapsto M(V).
	\]
	Let $F_\mu$ be the closed (finite) subset of~$\P(\bar\F_p^4)$ corresponding to subspaces $M$ which are defined over~$\F_p$, and let $\Z_\Qu'' := \Z_\Qu' \setminus \mu^{-1}(F_\mu)$.
	\begin{enumalpha}
		\item\label{item:dumbbell-mu-1}
			If $M\in F_\mu$, then the closed subset $\mu^{-1}(M)$ of~$\Z_\Qu'$ is at most three-dimensional.
		\item
			\label{item:dumbbell-mu-2}
			If $M=\langle m\rangle\in\P(\bar\F_p^4) \setminus F_\mu$, then the closed subset $\mu^{-1}(M)$ of~$\Z_\Qu'$ is at most one-dimensional.
			More specifically, if $\mu^{-1}(M)$ is non-empty, then $m$ lies in $S$ and there is an injective regular map $\mu^{-1}(M) \hookrightarrow D_{g(m)}$ (where~$D_{g(m)}$ is as in \Cref{def:dc}).
	\end{enumalpha}
\end{lemma}

\begin{proof}
	The proofs of \ref{item:dumbbell-mu-1} and \ref{item:dumbbell-mu-2} are very similar, the main difference being that for fixed $M$, in~\ref{item:dumbbell-mu-2}, there is only one possible vector space $U(V)$, whereas in~\ref{item:dumbbell-mu-1}, there is a two-dimensional set of possible vector spaces $U(V)$.
	\begin{enumalpha}
	\item
		Since $M$ is defined over~$\F_p$, we pick a $\sigma$-invariant generator $m \in (M \cap \F_p^4) \setminus \{0\}$ of~$M$.
		For any $V \in \mu^{-1}(M)$, the two-dimensional vector space $U(V)$ contains $M$ by definition.
		As $\{U \in \Gr_2(\bar\F_p^4) \mid M\subseteq U\}\simeq\P(\bar\F_p^4/M)$ is two-dimensional, it suffices to show that the image of the injective map $\varphi_{m,u} \colon \upsilon^{-1}(U) \to \P^1(\bar\F_p)\times\P^1(\bar\F_p)$ is at most one-dimensional for any $U=\langle m,u\rangle$ containing $M$.
		Let $U = \langle m,u\rangle$ be a two-dimensional subspace of~$\bar\F_p^4$ containing $M$, and assume that $\upsilon^{-1}(U)$ is non-empty.
		
		By \iiref{lem:dumbbell-lambda}{item:dumbbell-lambda-2}{item:dumbbell-lambda-2-U}, this implies $\bar\F_p^4 = U + \sigma^{-1}(U) + \sigma^{-2}(U) = \sigma^{-2}(\langle m,u,\sigma(u),\sigma^2(u)\rangle)$, so the vectors $m,u,\sigma(u),\sigma^2(u)$ form a basis of $\bar\F_p^4$.
		Write $\sigma^3(u) = \sum_{i=0}^2 c_i \sigma^i(u) + c_3 m$ with $c_0,\dots,c_3\in\bar\F_p$.

		For any $V \in \upsilon^{-1}(U)$, letting $\varphi_{m,u}(V) = ([r_1:s_1],[r_2:s_2])$, since $\sigma(V_1)\cap V_2\neq0$, we must have $\sigma^3(V_1)\cap\sigma^2(V_2)\neq0$, where according to \iiref{lem:dumbbell-lambda}{item:dumbbell-lambda-2}{item:dumbbell-lambda-2-Vi} and \Cref{eq:Li_from_rs}:
		\begin{align*}
			\sigma^3(V_1)
			&= \sigma^3(L_1(V)) + \sigma^2(L_1(V))
			= \langle
			r_1^{p^3} m + s_1^{p^3} \sigma^3(u),\quad
			r_1^{p^2} m + s_1^{p^2} \sigma^2(u)
			\rangle \\
			&= \langle
			(r_1^{p^3} + c_3 s_1^{p^3}) m + c_0 s_1^{p^3} u + c_1 s_1^{p^3} \sigma(u) + c_2 s_1^{p^3} \sigma^2(u),\quad
			r_1^{p^2} m + s_1^{p^2} \sigma^2(u)
			\rangle, \\
			\sigma^2(V_2)
			&= \sigma^2(L_2(V)) + \sigma(L_2(V))
			= \langle
			r_2^{p^2} m + s_2^{p^2} \sigma(u),\quad
			r_2^p m + s_2^p u
			\rangle.
		\end{align*}
		Writing everything in terms of the basis $(m,u,\sigma(u),\sigma^2(u))$, this means that the matrix
		\[
			\begin{pmatrix}
				r_1^{p^3} + c_3 s_1^{p^3} & c_0 s_1^{p^3} & c_1 s_1^{p^3} & c_2 s_1^{p^3} \\
				r_1^{p^2} & & & s_1^{p^2} \\
				r_2^{p^2} & & s_2^{p^2} \\
				r_2^p & s_2^p
			\end{pmatrix}
		\]
		must be singular, so its determinant must vanish.
		This determinant is a non-zero polynomial in $r_1,s_1,r_2,s_2$ (it always involves the summand~$r_1^{p^3}s_1^{p^2}s_2^{p^2+p}$), which shows that the image of~$\varphi_{m,u}$ in $\P^1(\bar\F_p)\times\P^1(\bar\F_p)$ is indeed at most one-dimensional.
	\item
		Let $M=\langle m\rangle\in\P(\bar\F_p^4) \setminus F_\mu$.
		If $\mu^{-1}(M)$ is empty, the claims are clear, so we assume that $\mu^{-1}(M)$ is non-empty.
		For any $V\in\mu^{-1}(M)$, we have $U(V) \supseteq M(V) + \sigma^{-1}(M(V))$ by definition; since the one-dimensional space~$M(V) = M$ is not defined over~$\F_p$ and since $U(V)$ is two-dimensional, we in fact have
		\begin{equation*}
			U(V) = M(V) \oplus \sigma^{-1}(M(V)) = \langle m , \sigma^{-1}(m) \rangle,
		\end{equation*}
		so $\mu^{-1}(M) = \upsilon^{-1}(U)$ where $U := \langle m, \sigma^{-1}(m) \rangle$.
		
		By \iiref{lem:dumbbell-lambda}{item:dumbbell-lambda-2}{item:dumbbell-lambda-2-U}, and because $\upsilon^{-1}(U)$ is non-empty, we have $\bar\F_p^4 = U + \sigma^{-1}(U) + \sigma^{-2}(U) = \sigma^{-3}\bigl(\langle m,\sigma(m),\sigma^2(m),\sigma^3(m)\rangle\bigr)$, so the vectors $m,\sigma(m),\sigma^2(m),\sigma^3(m)$ form a basis of $\bar\F_p^4$, i.e., $m$ lies in $S$.
		Let $(c_0,\dots,c_3) := g(m) \in \bar\F_p^4$, so that by definition $\sigma^4(m) = \sum_{i=0}^3 c_i \sigma^i(m)$.

		For any $V \in \mu^{-1}(M)$, letting $\varphi_{m,\sigma^{-1}(m)}(V) = ([r_1:s_1],[r_2:s_2])$, since $\sigma(V_1)\cap V_2\neq0$, we must have $\sigma^4(V_1)\cap\sigma^3(V_2)\neq0$, where according to \iiref{lem:dumbbell-lambda}{item:dumbbell-lambda-2}{item:dumbbell-lambda-2-Vi} and \Cref{eq:Li_from_rs}:
		\begin{align*}
			\sigma^4(V_1)
			&= \sigma^4(L_1(V)) + \sigma^3(L_1(V))
			= \langle
			r_1^{p^4} \sigma^4(m) + s_1^{p^4} \sigma^3(m),\quad
			r_1^{p^3} \sigma^3(m) + s_1^{p^3} \sigma^2(m)
			\rangle \\
			&= \langle
			c_0 r_1^{p^4} m + c_1 r_1^{p^4} \sigma(m) + c_2 r_1^{p^4} \sigma^2(m) + (c_3 r_1^{p^4} + s_1^{p^4}) \sigma^3(m),\quad
			r_1^{p^3} \sigma^3(m) + s_1^{p^3} \sigma^2(m)
			\rangle, \\
			\sigma^3(V_2)
			&= \sigma^3(L_2(V)) + \sigma^2(L_2(V))
			= \langle
			r_2^{p^3} \sigma^2(m) + s_2^{p^3} \sigma(m),\quad
			r_2^{p^2} \sigma(m) + s_2^{p^2} m
			\rangle.
		\end{align*}
		Writing everything in terms of the basis $(m,\sigma(m),\sigma^2(m),\sigma^3(m))$, this means that the matrix
		\[
			\begin{pmatrix}
				c_0 r_1^{p^4} & c_1 r_1^{p^4} & c_2 r_1^{p^4} & c_3 r_1^{p^4} + s_1^{p^4} \\
				& & s_1^{p^3} & r_1^{p^3} \\
				& s_2^{p^3} & r_2^{p^3} \\
				s_2^{p^2} & r_2^{p^2}
			\end{pmatrix}
		\]
		must be singular, so its determinant
		\[
			-c_0r_1^{p^4+p^3}r_2^{p^3+p^2} + c_1r_1^{p^4+p^3}r_2^{p^3}s_2^{p^2} - c_2r_1^{p^4+p^3}s_2^{p^3+p^2} + c_3r_1^{p^4}s_1^{p^3}s_2^{p^3+p^2} + s_1^{p^4+p^3}s_2^{p^3+p^2}
		\]
		must vanish.
		Letting $\tau \colon \P^1(\bar\F_p)\times\P^1(\bar\F_p) \to \P^1(\bar\F_p)\times\P^1(\bar\F_p)$ be the bijective regular map $([r_1:s_1],[r_2:s_2]) \mapsto ([r_1^{p^3}:s_1^{p^3}],[r_2^{p^2}:s_2^{p^2}])$, we have shown that the image of the injective regular map $\tau\circ\varphi_{m,\sigma^{-1}(m)} \colon \mu^{-1}(M) \to \P^1(\bar\F_p^4)\times\P^1(\bar\F_p^4)$ is contained in $D_{(c_0,\dots,c_3)} = D_{g(m)}$.
		\qedhere
	\end{enumalpha}
\end{proof}

\begin{lemma}
	\label{lem:irreducible-polynomial}
	There is a non-empty open subset $O' \subseteq \bar\F_p^4$ such that, for all $\underline c\in O'$, the closed subset~$D_{\underline c} \subseteq \P^1(\bar\F_p) \times \P^1(\bar\F_p)$ is irreducible.
\end{lemma}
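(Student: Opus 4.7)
The plan is to combine an upper-semicontinuity argument with an explicit specialization. First, I would observe that the family $\mathcal D \to \bar\F_p^4$ is flat and projective: the polynomial from \Cref{def:dc} always has the monomial $s_1^{p+1} s_2^{p+1}$ with coefficient $1$, so every fiber $D_{\underline c}$ is a non-trivial bidegree-$(p+1, p+1)$ effective Cartier divisor of $\P^1 \times \P^1$ with constant Hilbert polynomial. By the standard result that the locus of geometrically irreducible fibers in a flat proper family of finite presentation is Zariski open, the set $O' \subseteq \bar\F_p^4$ in the statement is Zariski open, and it suffices to exhibit a single $\underline c^* \in O'$.

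For this, I would try $\underline c^* = (1, 0, 0, -1)$, whose defining bihomogeneous polynomial
\[
  -r_1^{p+1} r_2^{p+1} - r_1^p s_1 s_2^{p+1} + s_1^{p+1} s_2^{p+1}
\]
is divisible neither by $s_1$ nor by $s_2$ (checked by setting these variables to zero), so its irreducibility reduces to that of the dehomogenization $f \coloneq -r_1^{p+1} r_2^{p+1} - r_1^p + 1$ in $\bar\F_p[r_1, r_2]$. Viewed as a polynomial in $r_2$ over $K \coloneq \bar\F_p(r_1)$, the polynomial $f$ is, up to a unit, equal to $r_2^{p+1} - \alpha$ with $\alpha = (1-r_1)^p / r_1^{p+1}$, using the Frobenius identity $1 - r_1^p = (1-r_1)^p$. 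At the $(1-r_1)$-adic valuation, $\alpha$ has valuation $p$, so the Newton polygon of $r_2^{p+1} - \alpha$ consists of a single edge from $(0, p)$ to $(p+1, 0)$; since $\gcd(p, p+1) = 1$ this edge has no interior integer point, and the Newton polygon irreducibility criterion gives irreducibility over the completion at $(1-r_1)$, hence over $K$. Gauss's lemma (the coefficients of $f$ in $\bar\F_p[r_1]$ are $-r_1^{p+1}$ and $(1-r_1)^p$, which are coprime) then transfers this to irreducibility in $\bar\F_p[r_1, r_2]$, showing $\underline c^* \in O'$.

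The main obstacle will be identifying a specialization that is both tractable and irreducible: the na\"ive choices where only one $c_i$ is non-zero all yield obviously reducible $D_{\underline c}$'s (factoring off an $s_2$, or splitting as $\prod_\zeta (r_1 r_2 - \zeta s_1 s_2)$ over the $(p+1)$-th roots of unity), and the success of the proposed $\underline c^*$ hinges on two special features of characteristic $p$---the Frobenius identity, which exposes a Newton polygon with a single edge, and the coprimality of $p$ and $p+1$, which prevents that edge from subdividing.
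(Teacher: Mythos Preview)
Your proof is correct and takes a genuinely different route from the paper's, though both ultimately rest on the same arithmetic fact that $\gcd(p,p+1)=1$.

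The paper argues at the generic point: it shows directly that the defining polynomial is irreducible over $L=\bar{\F_p(c_0,\dots,c_3)}$, then spreads out. To prove generic irreducibility, it specializes along the valuation $c_0\to0$ and studies the two-dimensional Newton polygon of the resulting bihomogeneous polynomial in $(r_1,s_1,r_2,s_2)$. The key step is that the edge $[(0,0),(p+1,p)]$ of the mod-$\mathfrak p$ Newton polygon has no interior lattice points, which forces any non-trivial factor to have a Newton polygon that cannot fit inside that of $f$.

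You instead invoke openness of geometric irreducibility in the flat proper family and then verify irreducibility at the single closed point $\underline c^*=(1,0,0,-1)$. Your verification is a one-variable Newton polygon argument: after dehomogenizing, the polynomial becomes $r_2^{p+1}-\alpha$ over $\bar\F_p(r_1)$ with $v_{(1-r_1)}(\alpha)=p$, and the single Newton segment from $(0,p)$ to $(p+1,0)$ has no interior lattice points since $\gcd(p,p+1)=1$; Gauss's lemma then lifts irreducibility to $\bar\F_p[r_1,r_2]$.

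What each approach buys: your argument is more elementary in that it uses only the classical one-variable Newton polygon criterion and a concrete specialization, avoiding the somewhat delicate 2D Minkowski-sum analysis and the extension of valuations to $L$. The paper's argument has the conceptual advantage of working directly with the generic fiber, but at the cost of a less standard combinatorial step. Both are clean; yours would be easier for a reader unfamiliar with 2D Newton polygons to verify.
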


\begin{proof}
	Let $f$ be the following bihomogeneous polynomial in the variables $r_1,s_1,r_2,s_2$, with coefficients in~$\F_p(c_0,\dots,c_3)$:
	\[
		f = - c_0 r_1^{p+1}r_2^{p+1} + c_1 r_1^{p+1}r_2^ps_2 - c_2 r_1^{p+1}s_2^{p+1} + c_3 r_1^ps_1s_2^{p+1} + s_1^{p+1}s_2^{p+1}
	\]
	Let $L = \bar{\F_p(c_0,\dots,c_3)}$.
	By \cite[\href{https://stacks.math.columbia.edu/tag/0559}{Lemma 0559}]{stacks-project}, it suffices to prove that the subscheme of~$\P^1_L\times\P^1_L$ defined by $f=0$ is irreducible, i.e., that $f$ is irreducible as a bihomogeneous polynomial over $L$.
	We will show this by specializing to $c_0=0$.
	Assume by contradiction that there are non-constant bihomogeneous polynomials $g,h \in L[r_1,s_1,r_2,s_2]$ such that $f=gh$.
	Let $v$ be an extension of the $c_0$-adic valuation on~$\F_p(c_0,\dots,c_3)$ to $L$ and let $\mathfrak p\subset\mathcal O\subset L$ be the corresponding maximal ideal and valuation ring.
	We have $\mathcal O/\mathfrak p = \bar{\F_p(c_1,\dots,c_3)}$.
	Since the coefficients of~$f$ lie in~$\mathcal O$, we can by Gauss' lemma assume without loss of generality that the coefficients of~$g$ and~$h$ also lie in~$\mathcal O$.%
	\footnote{
		Let $a$ and $b$ be the smallest valuations of coefficients of $g$ and $h$, respectively.
		Considering the lexicographically minimal monomials whose coefficients have these valuations and expanding the product, one can see that some coefficient of $gh$ has valuation $a+b$.
		Since all coefficients of $f=gh$ lie in $\mathcal O$, this means that $a+b\geq0$.
		Dividing $g$ by an element of valuation $a$ and multiplying $h$ by the same element, we can ensure that the coefficients of $g$ and $h$ lie in $\mathcal O$.
	}

	The \emph{Newton polygon} $\NP(a) \subset \mathbb R^2$ of a bihomogeneous polynomial $a = \sum_{i,j} k_{ij} r_1^i s_1^{d-i} r_2^j s_2^{e-j}$ with coefficients in an integral domain is the convex hull of the points $(i,j)\in\ZZ_{\geq0}^2$ with $k_{ij}\neq0$.
	For any two such polynomials $a,b$, the Newton polygon $\NP(ab)$ is the Minkowski sum of~$\NP(a)$ and $\NP(b)$.

	Over~$\mathcal O/\mathfrak p = \bar{\F_p(c_1,\dots,c_3)}$, we have
	\[
		(f \bmod \mathfrak p) = c_1 r_1^{p+1}r_2^ps_2 - c_2 r_1^{p+1}s_2^{p+1} + c_3 r_1^ps_1s_2^{p+1} + s_1^{p+1}s_2^{p+1}.
	\]
	The Newton polygon of~$f$ is the (solid) triangle with corners $(0,0)$, $(p+1,0)$, $(p+1,p+1)$ and the Newton polygon of~$(f\bmod\mathfrak p)$ is the (dashed) triangle with corners $(0,0)$, $(p+1,0)$, $(p+1,p)$.

	\begin{center}
	\begin{tikzpicture}[x=7mm,y=7mm]
	\draw[gray,->] (0,0) -- (4.9,0);
	\draw[gray,->] (0,0) -- (0,4.7);
	\draw (0,0) -- (4,0) -- (4,4) -- cycle;
	\draw[dashed,very thick] (0,0) -- (4,0) -- (4,3) -- cycle;
	\foreach \x in {0,...,4} {
		\foreach \y in {0,...,4} {
		\fill[gray] (\x,\y) circle[radius=0.3ex];
		}
	}
	\fill (0,0) circle[radius=0.45ex];
	\fill (3,0) circle[radius=0.45ex];
	\fill (4,0) circle[radius=0.45ex];
	\fill (4,3) circle[radius=0.45ex] node[right] {\footnotesize $(p+1,p)$};
	\fill (4,4) circle[radius=0.45ex] node[right] {\footnotesize $(p+1,p+1)$};
	\fill (0,0) circle[radius=0.45ex] node[below left] {\footnotesize $(0,0)$};
	\fill (4,0) circle[radius=0.45ex] node[below right] {\footnotesize $(p+1,0)$};
	\fill (3,0) circle[radius=0.45ex] node[below] {\footnotesize $(p,0)$};
	\end{tikzpicture}
	\end{center}

	The line segment $[(0,0),(p+1,p)]$ contains no integer lattice points other than its endpoints.
	Since $\NP(f\bmod\mathfrak p) = \NP(g\bmod\mathfrak p) + \NP(h\bmod\mathfrak p)$ and the corners of the Newton polygons $\NP(g\bmod\mathfrak p)$ and $\NP(h\bmod\mathfrak p)$ are non-negative integer lattice points, it follows that the Newton polygon of one of the factors (say $\NP(g\bmod\mathfrak p)$) contains a translate of that line segment.
	Moreover, as all other edges of~$\NP(f \bmod \mathfrak p)$ are either horizontal or vertical, so are the other edges of~$\NP(g\bmod\mathfrak p)$.
	The only possibility is that $\NP(g \bmod \mathfrak p) = \NP(f\bmod \mathfrak p)$, and then $\NP(g)\supseteq\NP(g\bmod\mathfrak p) = \NP(f\bmod\mathfrak p)$.

	We have $\NP(f) = \NP(g) + \NP(h)$, but the triangle $\NP(f)$ does not contain any proper translate of~$\NP(f\bmod\mathfrak p) \subseteq \NP(g)$, so $\NP(h)=\{(0,0)\}$, i.e., $h$ is a monomial of the form $k s_1^d s_2^e$.
	Clearly, such a monomial can only divide $f$ if $d=e=0$, so $h$ must be constant.
\end{proof}

\begin{corollary}
	\label{cor:dumbbell-mu-irred}
	There is a dense open subset $O$ of~$\P(\bar\F_p^4)$ such that for all $M\in O$, there is an injective regular map from the fiber $\mu^{-1}(M)$ to a one-dimensional irreducible variety.
\end{corollary}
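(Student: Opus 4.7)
The plan is to combine \iref{lem:dumbbell-mu}{item:dumbbell-mu-2} and \Cref{lem:irreducible-polynomial}. The first of these already shows that whenever $M\in\P(\bar\F_p^4)\setminus F_\mu$ has non-empty fiber, every non-zero representative $m\in M$ lies in $S$ and there is an injective regular map $\mu^{-1}(M)\hookrightarrow D_{g(m)}$; the second shows that $D_{\underline c}$ is irreducible for every $\underline c$ in the dense open subset $O'\subseteq\bar\F_p^4$. So the problem reduces to producing a dense open $O\subseteq\P(\bar\F_p^4)$ whose points $M$ all admit at least one representative $m$ satisfying $m\in S$ and $g(m)\in O'$.

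Let $\pi\colon\bar\F_p^4\setminus\{0\}\to\P(\bar\F_p^4)$ denote the projection, and consider the subset
\[
	W \coloneq \bigl(\bar\F_p^4\setminus\pi^{-1}(F_\mu)\bigr)\cap S\cap g^{-1}(O') \;\subseteq\; \bar\F_p^4\setminus\{0\}.
\]
Since $F_\mu$ is finite (being the set of $\F_p$-rational points of $\P^3(\bar\F_p)$), its preimage under~$\pi$ is a finite union of lines, whose complement is dense open in $\bar\F_p^4$; the set $S$ is dense open by definition; and $g^{-1}(O')$ is dense open in~$S$ by the regularity of~$g$. Hence $W$ is a dense open subset of $\bar\F_p^4\setminus\{0\}$.

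Define $O\coloneq\pi(W)$. The projection $\pi$ is an open map, being locally trivial (a $\mathbb G_m$-torsor over each of the standard affine charts of $\P(\bar\F_p^4)$), so $O$ is open in $\P(\bar\F_p^4)$; and since $\pi^{-1}(O)\supseteq W$ is dense in $\bar\F_p^4\setminus\{0\}$, the set $O$ itself is dense. For any $M\in O$, we pick a representative $m\in W$: then $M\notin F_\mu$, so if $\mu^{-1}(M)\neq\emptyset$ we have $\mu^{-1}(M)\hookrightarrow D_{g(m)}$ by \iref{lem:dumbbell-mu}{item:dumbbell-mu-2}, and $D_{g(m)}$ is a one-dimensional irreducible variety by \Cref{lem:irreducible-polynomial} (if instead $\mu^{-1}(M)=\emptyset$, the statement is trivial).

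The only subtle point is that $g$ is a function of the vector~$m$ rather than of the line $M=\langle m\rangle$, so $O'$ cannot be pulled back through a map out of $\P(\bar\F_p^4)$ directly; working at the level of vectors and then descending via the openness of~$\pi$ resolves this cleanly. A minor additional verification is that $D_{\underline c}$ is genuinely one-dimensional (and not just irreducible) for $\underline c\in O'$: this holds because the bihomogeneous polynomial defining $D_{\underline c}$ is irreducible over $\bar{\F_p(c_0,\ldots,c_3)}$ and in particular non-zero, so its specialization remains non-zero on a dense open subset of parameters (which we absorb into~$O'$) and thus cuts out a curve in $\P^1\times\P^1$.
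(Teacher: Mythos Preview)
Your approach is essentially the same as the paper's, and your use of the openness of the projection $\pi$ is a clean way to descend from $\bar\F_p^4\setminus\{0\}$ to $\P(\bar\F_p^4)$ (the paper instead invokes Chevalley's theorem to say the image is constructible and dense, hence contains a dense open subset).

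However, there is one genuine gap. You assert that ``$g^{-1}(O')$ is dense open in~$S$ by the regularity of~$g$'', but regularity of~$g$ only gives that $g^{-1}(O')$ is \emph{open} in~$S$; it does not by itself imply non-emptiness. Since $S$ is irreducible, density is equivalent to non-emptiness, and for this you need $g\colon S\to\bar\F_p^4$ to be dominant (so that its image meets the dense open set~$O'$). The paper supplies this missing ingredient: it observes that each fiber of~$g$ is finite, being cut out by the polynomial equations $m_i^{p^4}=\sum_j c_j m_i^{p^j}$ in the coordinates of~$m$; since $\dim S = 4 = \dim\bar\F_p^4$ and the target is irreducible, finite fibers force~$g$ to be dominant. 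Without this step your set~$W$ could in principle be empty, and the argument would collapse.
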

\begin{proof}
	All fibers of the map $g$ are finite since they are cut out by the non-trivial polynomial equations $m_i^{p^4} = \sum_{i=0}^3 c_i m_i^{p^i}$ in the coordinates $m_1,\dots,m_4$ of $m$.
	Since $\dim S = 4 = \dim\bar\F_p^4$ and $\bar\F_p^4$ is irreducible, this implies that $g$ is dominant.
	We have seen in \iref{lem:dumbbell-mu}{item:dumbbell-mu-2} that for any $m\in S$ (in particular, $\langle m\rangle$ is not defined over $\F_p$), there is an injective regular map $\mu^{-1}(\langle m\rangle) \to D_{g(m)}$.
	(This is obviously true if $\mu^{-1}(\langle m\rangle)$ is empty.)
	Now, let $O'$ be as in \Cref{lem:irreducible-polynomial}, so that $D_{g(m)}$ is irreducible when $g(m) \in O'$.
	The claim follows, taking~$O$ to be any dense open subset of the image of $g^{-1}(O') \subseteq S \subseteq \bar\F_p^4$ under the regular map $\bar\F_p^4 \to \P(\bar\F_p^4)$, $m \mapsto \langle m \rangle$.
	(The preimage $g^{-1}(O')$ is non-empty and open since $O'$ is non-empty and open and $g$ is dominant.
	Hence, its (constructible) image in $\P(\bar\F_p^4)$ is dense, so it contains a dense open subset.)
\end{proof}

\begin{proof}[Proof of \Cref{lem:irreducible-dumbbell}]
	The set $\Z_\Qu$ has pure dimension $4$ by \iref{lem:dim-Z}{item:dim-Z-normal}.
	Thus, \iref{lem:dumbbell-lambda}{item:dumbbell-lambda-1} and \iref{lem:dumbbell-mu}{item:dumbbell-mu-1} imply that the inclusions $\Z''_\Qu \subseteq \Z'_\Qu \subseteq \Z_\Qu$ are dense, so it suffices to prove that~$\Z''_\Qu$ is irreducible.
	For this, fix $O$ as in \Cref{cor:dumbbell-mu-irred} (which is three-dimensional and whose complement is at most two-dimensional) and apply \Cref{lem:irreducibility-from-fibers} to the map $\mu \colon \Z''_\Qu \to \P(\bar\F_p^4) \setminus F_\mu$.
	(The fiber~$\mu^{-1}(M)$ embeds in a one-dimensional variety by \iref{lem:dumbbell-mu}{item:dumbbell-mu-2}, and that variety can be taken to be irreducible when $x \in O$ by \Cref{cor:dumbbell-mu-irred}.)
\end{proof}

\subsection{Conclusion}
\label{subsn:diag-conclusion}

\begin{theorem}
	\label{thm:1-diag}
	For any finite field $\F_q\supseteq\F_p$, we have
	\[
		|\A^\diag (\F_q)|
		=
		c^\diag(p,n)
		\cdot
		q^{\lfloor n^2/3\rfloor + 1}
		+
		O_{p,n}\Bigl(
			q^{\lfloor n^2/3\rfloor + 1/2}
		\Bigr),
		\textnormal{ where }
		c^\diag(p,n)
		=
		\begin{cases}
			p^2 & \textnormal{if }n=2, \\
			2 & \textnormal{if }n=4, \\
			1 & \textnormal{if }n\notin\{2,4\}.
		\end{cases}
	\]
\end{theorem}

\begin{proof}
	We have seen above that $\A^\diag$ is a disjoint union of the finitely many constructible $\sigma$-invariant
	subsets~$\A^\diag_\Qu$.
	For all quivers $\Qu$ with $\dim \A^\diag_\Qu\leq\lfloor n^2/3\rfloor$, we have $|\A^\diag_\Qu (\F_q)| = O_{p,n}(q^{\lfloor n^2/3\rfloor})$ by the Schwartz--Zippel bound \cite[Lemma~1]{lang-weil}.
	\Cref{prop:octopus} classifies the remaining quivers and shows that they all satisfy $\dim \A^\diag_\Qu=\lfloor n^2/3\rfloor+1$.
	In \Cref{lem:irreducible-two,lem:irreducible-octopus,lem:irreducible-dumbbell}, we have computed the number of irreducible components of~$\A^\diag_\Qu$ in these cases, shown that they are all fixed by $\sigma$, and that the total number of irreducible components of dimension $\lfloor n^2/3\rfloor+1$ is precisely $c^\diag(p,n)$.
	The claim then follows from the Lang--Weil bound \cite[Theorem~1]{lang-weil}.
\end{proof}

\section{Towards general matrices commuting with their Frobenius}
\label{sn:general}

In this section, we relate the size of $\A(\F_q)$ to the numbers $d(M)$ defined in \Cref{eq:def-dM}, i.e., we prove \Cref{prop:counting-by-shape} (which implies \Cref{thm:1-all}).
To this end, we associate to any matrix in~$\Mat_n(\bar\F_p)$ a \emph{Jordan shape}, encoding the sizes of all Jordan blocks associated to the eigenvalues.

\paragraph{Jordan shapes.}
A \emph{Jordan shape} of size $n$ is a pair $\Sh = (V,e)$ consisting of a finite set $V$ and a map $e \colon V \times \N\to\mathbb Z_{\geq 0}$ such that $e(i,1)\geq1$ and $e(i,1)\geq e(i,2)\geq\cdots$ for all $i\in V$ and such that $\sum_{i\in V}\sum_{k\geq1} e(i,k) = n$.
An \emph{isomorphism} between Jordan shapes $\Sh = (V,e)$ and $\Sh' = (V',e')$ is a bijection $\pi \colon V \to V'$ such that $e(i,k) = e'(\pi(i),k)$ for all $i\in V$ and $k\geq1$.
We let $\JS_n$ be the (finite) set of isomorphism classes of Jordan shapes of size $n$.

\begin{definition}
	To any matrix $M\in\Mat_n(\bar\F_p)$, we associate a Jordan shape $\Sh_M=(V_M,e_M)$ of size $n$ as follows: the set $V_M$ consists of the eigenvalues of~$M$; for each eigenvalue $\lambda$ and each $k\geq1$, we let
	\[
		e_M(\lambda,k) := \dim\leftl(\ker(M-\lambda I_n)^k / \ker(M-\lambda I_n)^{k-1}\rightr),
	\]
	be the number of Jordan blocks of size at least $k$ for this eigenvalue.
\end{definition}
Two matrices $M$ and $M'$ are conjugate if and only if they have equal Jordan shapes, i.e., $V_M = V_{M'}$ and $e_M = e_{M'}$.
Two matrices having \emph{isomorphic} Jordan blocks, by contrast, may not have the same eigenvalues---for instance, $M$ and $\sigma(M)$ always have isomorphic Jordan shapes via $\pi \colon \lambda \mapsto \sigma(\lambda)$.

\paragraph{The space of matrices with a given Jordan shape commuting with their Frobenius.}

For any Jordan shape $\Sh\in\JS_n$, we define the subset $\A_\Sh \subseteq \A$ of matrices $M\in\Mat_n(\bar\F_p)$ such that~$M$ commutes with $\sigma(M)$ and such that~$\Sh_M$ is isomorphic to~$\Sh$.
(The vertices of the Jordan shape~$\Sh = (V,e)$ are purely combinatorial: the $\card V$ eigenvalues of a matrix~$M \in \A_\Sh$ are subject to no constraints other than distinctness.)
Clearly,
\[
	\A = \bigsqcup_{\Sh\in\JS_n} \A_\Sh.
\]

\begin{remark}
	The sets $\A_\Sh$ for $\Sh\in\JS_n$ defined here are related to the constructible sets $\A^\diag_\Qu$ for $\Qu\in\Bal_n$ defined in \Cref{subsn:matrices-and-quivers} as follows: if the shape $\Sh=(V,e)$ is one that corresponds to diagonalizable matrices (i.e., $e(i,2)=0$ for all $i\in V$), then $\A_\Sh$ is the union of the sets $\A^\diag_\Qu$ over all quivers $\Qu\in\Bal_n$ whose vertex set $V(\Qu)$ is $V$ and whose degrees satisfy $d_\Qu(i)=e(i,1)$ for all $i\in V$.
\end{remark}

For any matrix $M\in\Mat_n(\bar\F_p)$, denote by $\Cent M$ its centralizer and by $\Cl M$ its conjugacy class.
Note that $\Cent M$ is a subalgebra of~$\Mat_n(\bar\F_p)$ and that $\Cl M$ is a constructible subset of~$\Mat_n(\bar\F_p)$.

Now, fix a shape $\Sh=(V,e)$, say with $V = \{1,\dots,r\}$.
Let $\Y_\Sh \subseteq \bar\F_p^r$ be the (open) subset formed of tuples $\lambda=(\lambda_1,\dots,\lambda_r)$ of distinct elements of~$\bar\F_p$.
For any $\lambda\in \Y_\Sh$, we define a matrix $A_{\Sh,\lambda}$ of shape~$\Sh$ as follows: $A_{\Sh,\lambda}$ is the matrix in Jordan normal form having $e(i,k)-e(i,k+1)$ Jordan blocks of size $k$ associated to each eigenvalue~$\lambda_i$, where we put the Jordan blocks for eigenvalue~$\lambda_i$ before those for eigenvalue $\lambda_j$ if $i<j$, and we order blocks with the same eigenvalue by their size.

\begin{lemma}
	\label{lem:cent-indep-lambda}
	For any $\lambda,\lambda'\in \Y_\Sh$, we have $\Cent A_{\Sh,\lambda} = \Cent A_{\Sh,\lambda'}$.
	We denote the corresponding subalgebra of~$\Mat_n(\bar\F_p)$ by~$\Cent \Sh$.
\end{lemma}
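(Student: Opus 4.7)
The plan is to exploit the fact that, by construction, the Jordan blocks of $A_{\Sh,\lambda}$ are placed in fixed positions determined by $\Sh$: only the diagonal entries of $A_{\Sh,\lambda}$ depend on $\lambda$. More precisely, there is a decomposition $\bar\F_p^n = \bigoplus_{i=1}^r W_i$ into coordinate subspaces $W_i$ (depending only on $\Sh$) such that
\[
	A_{\Sh,\lambda} = \bigoplus_{i=1}^r (\lambda_i I_{W_i} + N_i),
\]
where each $N_i \in \End(W_i)$ is a fixed nilpotent endomorphism depending only on $\Sh$ (it is the block-diagonal nilpotent matrix whose blocks are the nilpotent Jordan blocks of the sizes prescribed by $\Sh$).

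The first step is to show that any $B \in \Cent A_{\Sh,\lambda}$ preserves this decomposition. This follows from identifying $W_i$ as the generalized eigenspace $W_i = \ker(A_{\Sh,\lambda} - \lambda_i I_n)^n$ of $A_{\Sh,\lambda}$ associated to the eigenvalue $\lambda_i$: since $B$ commutes with $A_{\Sh,\lambda}$, it commutes with every polynomial in $A_{\Sh,\lambda}$, hence preserves the kernel of each such polynomial, and in particular preserves each $W_i$. Crucially, the subspaces $W_i$ themselves are independent of the specific values $\lambda_i$.

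The second step is a direct computation: once $B$ is block-diagonal with respect to this decomposition, writing $B = \bigoplus_i B_i$ with $B_i \in \End(W_i)$, the relation $B A_{\Sh,\lambda} = A_{\Sh,\lambda} B$ rewrites blockwise as $B_i(\lambda_i I_{W_i} + N_i) = (\lambda_i I_{W_i} + N_i) B_i$, which simplifies (since $\lambda_i I_{W_i}$ is central) to $B_i N_i = N_i B_i$. Hence
\[
	\Cent A_{\Sh,\lambda} = \bigoplus_{i=1}^r \Cent_{\End(W_i)}(N_i),
\]
and the right-hand side depends only on $\Sh$ and not on $\lambda$. I expect no serious obstacle here: the argument is a standard fact about centralizers of matrices in Jordan normal form, and the only point requiring attention is the identification of $W_i$ as the generalized eigenspace, which uses only that the $\lambda_i$ are pairwise distinct (i.e., $\lambda \in \Y_\Sh$).
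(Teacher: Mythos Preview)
Your proposal is correct and follows essentially the same approach as the paper: both identify the fixed coordinate subspaces $W_i$ as the generalized eigenspaces (independent of $\lambda$), observe that any matrix in the centralizer must preserve them, and then use that $A_{\Sh,\lambda}$ and $A_{\Sh,\lambda'}$ differ on each $W_i$ only by a scalar. The paper phrases the last step as $A_{\Sh,\lambda'}v = A_{\Sh,\lambda}v + (\lambda'_i-\lambda_i)v$ for $v\in W_i$, while you equivalently reduce to the $\lambda$-independent condition $B_iN_i = N_iB_i$.
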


\begin{proof}
	For any $i\in\{1,\dots,n\}$, the generalized eigenspace of~$A_{\Sh,\lambda}$ with eigenvalue $\lambda_i$ is also the generalized eigenspace of~$A_{\Sh,\lambda'}$ with eigenvalue $\lambda_i'$.
	Denote this common generalized eigenspace by~$G_i$.
	We have $A_{\Sh,\lambda'}v = A_{\Sh,\lambda}v + (\lambda'_i-\lambda_i)v$ for all $v\in G_i$.
	The claim follows since any matrix commuting with $A_{\Sh,\lambda}$ or $A_{\Sh,\lambda'}$ preserves the generalized eigenspaces.
\end{proof}

\begin{remark}
	The centralizer $\Cent \Sh$ admits an explicit description (some coefficients have to vanish, and some others must be equal), see \cite[Chap.~VIII, \S 2]{gantmakher}.
	Its dimension is
	$
		\sum_{i=1}^r
			\sum_{k \geq 1}
				e(i,k)^2
	$.
\end{remark}

\begin{corollary}
	The set of matrices $U\in\GL_n(\bar\F_p)$ such that $A_{\Sh,\lambda}$ commutes with $UA_{\Sh,\tilde\lambda}U^{-1}$ does not depend on the choice of~$\lambda,\tilde\lambda\in \Y_\Sh$.
	We denote this closed subset of~$\GL_n(\bar\F_p)$ by $\D_\Sh$.
\end{corollary}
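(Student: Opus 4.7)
The plan is to use the additive (Jordan--Chevalley) decomposition of $A_{\Sh,\lambda}$ to separate out all $\lambda$-dependence. Fixing any $\lambda \in \Y_\Sh$, write $A_{\Sh,\lambda} = N + D_\lambda$ with $N$ nilpotent and $D_\lambda$ semisimple. By construction of the standard Jordan matrix, $N$ is a fixed matrix (a sum of superdiagonal elementary matrices) that does not depend on $\lambda$, while $D_\lambda = \sum_{i=1}^r \lambda_i P_i$ where the $P_i$ are the diagonal projectors onto the generalized eigenspaces $G_i$ of $A_{\Sh,\lambda}$. These $G_i$ are themselves $\lambda$-independent, by the observation already used in the proof of \Cref{lem:cent-indep-lambda}.

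Since the semisimple and nilpotent parts of a matrix $M$ are polynomials in $M$, a matrix $X$ commutes with $M$ if and only if it commutes with both those parts. Applying this to $M = A_{\Sh,\lambda}$ and to its conjugate $U A_{\Sh,\tilde\lambda} U^{-1}$ (whose Jordan decomposition is $UNU^{-1} + UD_{\tilde\lambda}U^{-1}$), the condition $[A_{\Sh,\lambda}, U A_{\Sh,\tilde\lambda} U^{-1}] = 0$ becomes the simultaneous vanishing of the four commutators
\[
	[N, UNU^{-1}], \quad
	[D_\lambda, UNU^{-1}], \quad
	[N, UD_{\tilde\lambda}U^{-1}], \quad
	[D_\lambda, UD_{\tilde\lambda}U^{-1}].
\]

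I then check that each of these conditions depends only on $\Sh$ and $U$. The first involves no $\lambda$ or $\tilde\lambda$. The second holds iff $UNU^{-1}$ preserves each eigenspace of $D_\lambda$, i.e., each $G_i$, and this is a $\lambda$-independent condition. By the symmetric argument (applied to $U^{-1}$ and $\tilde\lambda$), the third is $\tilde\lambda$-independent. The fourth holds iff $UD_{\tilde\lambda}U^{-1}$ preserves each $G_i$, equivalently iff each $U^{-1}G_i$ is $D_{\tilde\lambda}$-stable; as the eigenspaces of $D_{\tilde\lambda}$ are again the $G_j$'s, this amounts to requiring that each $U^{-1}G_i$ be a direct sum of the $G_j$'s, a condition involving neither $\lambda$ nor $\tilde\lambda$. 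This establishes the independence claim. Finally, $\D_\Sh$ is closed in $\GL_n(\bar\F_p)$ because, for any one fixed choice of $(\lambda, \tilde\lambda) \in \Y_\Sh \times \Y_\Sh$, the commutation condition is the vanishing of a regular map $U \mapsto [A_{\Sh,\lambda}, U A_{\Sh,\tilde\lambda} U^{-1}]$ on $\GL_n(\bar\F_p)$. There is no real obstacle here beyond recognizing that the Jordan decomposition cleanly decouples $\lambda$ from the geometric data (the $G_i$ and $N$) that actually controls the commutation.
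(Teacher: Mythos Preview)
Your proof is correct, but it takes a more hands-on route than the paper. The paper's argument is a two-line application of \Cref{lem:cent-indep-lambda}: the condition $[A_{\Sh,\lambda},\,UA_{\Sh,\tilde\lambda}U^{-1}]=0$ is equivalent to $UA_{\Sh,\tilde\lambda}U^{-1}\in\Cent A_{\Sh,\lambda}=\Cent\Sh$ (which kills the $\lambda$-dependence), and, after conjugating by $U^{-1}$, also equivalent to $U^{-1}A_{\Sh,\lambda}U\in\Cent A_{\Sh,\tilde\lambda}=\Cent\Sh$ (which kills the $\tilde\lambda$-dependence). No Jordan--Chevalley decomposition, no case analysis --- just two invocations of the fact that the centralizer does not see $\lambda$.

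Your approach instead splits the commutation into the four commutators of the semisimple and nilpotent parts and checks each separately. This is valid (over the perfect field $\bar\F_p$ the Jordan parts are polynomials in the matrix, so the equivalence with the four vanishing conditions holds), and it has the merit of making explicit exactly which geometric data --- the spaces $G_i$ and the fixed nilpotent $N$ --- govern membership in $\D_\Sh$. One small wording issue: in your fourth condition, ``$U^{-1}G_i$ is a direct sum of the $G_j$'s'' should read ``$U^{-1}G_i$ decomposes as $\bigoplus_j (U^{-1}G_i\cap G_j)$''; the $D_{\tilde\lambda}$-stable subspaces are not literally sums of whole eigenspaces. In short, the paper's proof is shorter and leverages the previous lemma directly, while yours is more explicit but essentially re-derives that lemma's content inside the argument.
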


\begin{proof}
	This follows from \Cref{lem:cent-indep-lambda} due to the following equivalences:
	\[
		\begin{gathered}[b]
			\begin{tikzcd}
				A_{\Sh,\lambda}\textnormal{ commutes with }UA_{\Sh,\tilde\lambda}U^{-1}
				\dar[Leftrightarrow] \rar[Leftrightarrow]
				&
				UA_{\Sh,\tilde\lambda}U^{-1} \in \Cent \Sh \text{ (independent of $\lambda$)}
				\\
				U^{-1}A_{\Sh,\lambda}U\textnormal{ commutes with }A_{\Sh,\tilde\lambda}
				\rar[Leftrightarrow]
				&
				U^{-1}A_{\Sh,\lambda}U \in \Cent \Sh \text{ (independent of $\tilde\lambda$)}
			\end{tikzcd}\\[-2.5\dp\strutbox]
		\end{gathered}\qedhere
	\]
\end{proof}

\begin{proposition}
	\label{prop:dimension-of-XS}
	For any Jordan shape $\Sh=(V,e)$, the set $\A_\Sh$ is a constructible subset of~$\Mat_n(\bar\F_p)$ of dimension $|V| + \dim \D_\Sh - \dim \Cent \Sh$.
\end{proposition}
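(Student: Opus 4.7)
The plan is to parametrize $\A_\Sh$ via Jordan normal forms and then apply a fiber-dimension formula, in the spirit of \Cref{lem:dim-Z}. First, I would consider the surjective regular map
\[
    f \colon \Y_\Sh \times \GL_n(\bar\F_p) \to \bigl\{M \in \Mat_n(\bar\F_p) \bigm\vert \Sh_M \simeq \Sh\bigr\},\qquad (\lambda, U) \mapsto U A_{\Sh, \lambda} U^{-1},
\]
whose surjectivity reflects the fact that any two matrices of the same Jordan shape are conjugate. A direct computation using $\sigma(M) = \sigma(U) A_{\Sh, \sigma(\lambda)} \sigma(U)^{-1}$ shows that $M$ commutes with $\sigma(M)$ if and only if $A_{\Sh, \lambda}$ commutes with the conjugate $\bwp(U) A_{\Sh, \sigma(\lambda)} \bwp(U)^{-1}$, where $\bwp(U) \coloneq U^{-1}\sigma(U)$. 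By the definition of $\D_\Sh$, this translates into $\bwp(U) \in \D_\Sh$. Therefore
\[
    \A_\Sh = f\Bigl(\Y_\Sh \times \bwp^{-1}(\D_\Sh)\Bigr),
\]
which is constructible by Chevalley's theorem.

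For the dimension formula, I would combine three ingredients. First, $\dim \Y_\Sh = |V|$, since $\Y_\Sh$ is open in $\bar\F_p^{|V|}$. Second, by the étaleness of $\bwp$ (\Cref{lem:wp-etale}), the preimage $\bwp^{-1}(\D_\Sh)$ has the same dimension as $\D_\Sh$ (componentwise). Third, I would identify the fiber dimension of the restriction $f \colon \Y_\Sh \times \bwp^{-1}(\D_\Sh) \to \A_\Sh$ as $\dim \Cent \Sh$ above every point $M \in \A_\Sh$. For the latter, observe that this fiber coincides with the fiber of the unrestricted map $f$, because whether $\bwp(U) \in \D_\Sh$ holds for a pair $(\lambda, U)$ with $UA_{\Sh,\lambda}U^{-1} = M$ depends only on the image $M$ (by the equivalence above), and this image lies in $\A_\Sh$ by assumption. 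The tuple $\lambda$ is then determined up to the action of the finite group $\Aut(\Sh)$, and for each admissible $\lambda$, the set of valid $U$ is a single right coset of $\Cent A_{\Sh,\lambda} \cap \GL_n(\bar\F_p) = \Cent \Sh \cap \GL_n(\bar\F_p)$; since $\Cent \Sh$ is a linear subspace containing $I_n$, this intersection is a dense open subvariety of $\Cent \Sh$, hence has dimension $\dim \Cent \Sh$.

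Applying the fiber-dimension formula to the surjection then yields $\dim \A_\Sh = |V| + \dim \D_\Sh - \dim \Cent \Sh$, as desired. The main point requiring technical care is that $\D_\Sh$ (and hence $\bwp^{-1}(\D_\Sh)$) need not be equidimensional, so the fiber-dimension count should be applied irreducible-component by irreducible-component before taking the maximum. Beyond this, the entire argument is a direct application of \Cref{lem:wp-etale} together with standard dimension theory.
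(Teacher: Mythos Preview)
Your proposal is correct and follows essentially the same approach as the paper: parametrize $\A_\Sh$ via the map $(\lambda,U)\mapsto UA_{\Sh,\lambda}U^{-1}$ restricted to $\Y_\Sh\times\bwp^{-1}(\D_\Sh)$, use \Cref{lem:wp-etale} to identify $\dim\bwp^{-1}(\D_\Sh)=\dim\D_\Sh$, and compute the fibers as finite unions of cosets of $(\Cent\Sh)^\times$. Your additional remarks (that the restricted and unrestricted fibers over a point of $\A_\Sh$ coincide, and the caveat about equidimensionality of $\D_\Sh$) are correct refinements but do not change the underlying argument.
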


\begin{proof}
	As before, we may assume that $V = \{1,\dots,r\}$.
	Let $M$ be a matrix such that we have an isomorphism $\pi \colon \Sh \to \Sh_M$.
	Then, taking $\lambda := (\pi(1),\dots,\pi(r))$, we see that $M$ must be conjugate to~$A_{\Sh,\lambda}$.
	Write $M = UA_{\Sh,\lambda} U^{-1}$.
	Then, $M$ commutes with $\sigma(M) = \sigma(U) A_{\Sh,\sigma(\lambda)} \sigma(U)^{-1}$ if and only if~$A_{\Sh,\lambda}$ commutes with $(U^{-1}\sigma(U))A_{\Sh,\sigma(\lambda)}(U^{-1}\sigma(U))^{-1}$, i.e., if and only if $\bwp(U) := U^{-1}\sigma(U)$ lies in~$\D_\Sh$.
	We have shown that the regular map
	\[
		\Y_\Sh \times \bwp^{-1}(\D_\Sh) \to \Mat_n(\bar\F_p),\qquad (\lambda,U) \mapsto UA_{\Sh,\lambda}U^{-1}
	\]
	has image $\A_\Sh$.
	In particular, $\A_\Sh$ is constructible by Chevalley's theorem.
	Each fiber is the union of~$|\Aut(\Sh)|$ sets of the form $\{(\lambda, US) \mid S \in (\Cent \Sh)^\times\}$ where $(\lambda,U)\in\Y_\Sh\times\bwp^{-1}(\D_\Sh)$, hence has dimension $\dim (\Cent \Sh)^\times = \dim \Cent \Sh$.
	By \Cref{lem:wp-etale}, we have $\dim \bwp^{-1}(\D_\Sh) = \dim \D_\Sh$.
	Thus,
	\[
		\dim \A_\Sh = \dim \Y_\Sh + \dim \bwp^{-1}(\D_\Sh) - \dim \Cent \Sh = |V| + \dim \D_\Sh - \dim \Cent \Sh.
		\qedhere
	\]
\end{proof}

\begin{lemma}
	\label{lem:dim-cent-cl}
	For any matrix $M\in\Mat_n(\bar\F_p)$ with Jordan shape $\Sh_M\simeq\Sh$, the subset $\Cent M\cap\Cl M$ of~$\Mat_n(\bar\F_p)$ has pure dimension $\dim \D_\Sh - \dim \Cent\Sh$.
\end{lemma}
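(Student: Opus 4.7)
By $\GL_n$-equivariance of the conjugation action on pairs $(\Cent M, \Cl M)$, I may assume that $M = A_{\Sh, \lambda_0}$ for some $\lambda_0 \in \Y_\Sh$, in which case $\Cent M = \Cent \Sh$ by \Cref{lem:cent-indep-lambda}. The plan is then to realize $\Cent M \cap \Cl M$ as the quotient of $\D_\Sh$ by a free action of the connected group $(\Cent \Sh)^\times$, adapting the dimension computation already carried out in the proof of \Cref{prop:dimension-of-XS}.

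Concretely, I would introduce the regular map
\[
	\phi \colon \D_\Sh \longrightarrow \Cent M \cap \Cl M, \qquad U \longmapsto U M U^{-1}.
\]
The image lies in $\Cl M$ by construction and in $\Cent M$ because, taking $\lambda = \tilde\lambda = \lambda_0$ in the definition of $\D_\Sh$, the condition $U \in \D_\Sh$ says precisely that $M$ commutes with $UMU^{-1}$. Conversely, any $N \in \Cent M \cap \Cl M$ can be written as $N = UMU^{-1}$ for some $U \in \GL_n$, and then the commutation $MN = NM$ rewrites as $U \in \D_\Sh$, so $\phi$ is surjective. Two elements $U, U' \in \D_\Sh$ satisfy $\phi(U) = \phi(U')$ exactly when $U^{-1}U' \in (\Cent M)^\times = (\Cent \Sh)^\times$, so the fibers of $\phi$ are right cosets of $(\Cent \Sh)^\times$; as Zariski-open subsets of the irreducible linear space $\Cent \Sh$, these fibers are irreducible of pure dimension $\dim \Cent \Sh$. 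Altogether, $\phi$ exhibits $\D_\Sh$ as an étale-locally trivial principal $(\Cent \Sh)^\times$-bundle over $\Cent M \cap \Cl M$.

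Since $(\Cent \Sh)^\times$ is connected, this torsor structure yields a bijection between the irreducible components of $\D_\Sh$ and those of $\Cent M \cap \Cl M$: the preimage of any irreducible subset of the base under $\phi$ is irreducible (being fibered by irreducible sets of the same dimension over an irreducible base), and every irreducible component of $\D_\Sh$ surjects onto an irreducible component of $\Cent M \cap \Cl M$, with the two dimensions differing by exactly $\dim \Cent \Sh$. This simultaneously gives the dimension formula $\dim(\Cent M \cap \Cl M) = \dim \D_\Sh - \dim \Cent \Sh$ and equates pure-dimensionality of the two sides. The expected main obstacle is precisely to establish pure-dimensionality itself. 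I plan to handle this via the $(\Cent M)^\times$-conjugation action on $\Cent M \cap \Cl M$: this set decomposes into orbits indexed by simultaneous $\GL_n$-conjugacy classes of commuting pairs $(M, N)$ with $N \sim M$, each orbit being smooth irreducible of dimension $\dim \Cent M - \dim(\Cent M \cap \Cent N)$. A careful orbit-closure analysis will then show that each irreducible component of $\Cent M \cap \Cl M$ is the closure of a generic orbit of maximal dimension, yielding pure-dimensionality of the expected value.
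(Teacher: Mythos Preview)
Your core argument is exactly the paper's: reduce to $M=A_{\Sh,\lambda}$, consider the map $\phi\colon\D_\Sh\to\Mat_n(\bar\F_p)$, $U\mapsto UMU^{-1}$, observe that its image is $\Cent M\cap\Cl M$ and that each fiber is a coset of $(\Cent\Sh)^\times$. The paper's entire proof is literally these two sentences; it does not invoke principal-bundle language, connectedness of $(\Cent\Sh)^\times$, or any orbit-closure analysis.

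Where you diverge is in your treatment of the word ``pure''. You correctly note that the fiber argument only \emph{transfers} pure-dimensionality between $\D_\Sh$ and $\Cent M\cap\Cl M$, and you then propose an additional orbit-closure analysis under the $(\Cent M)^\times$-conjugation action to establish purity directly. This last part is only a plan, not a proof: you have not explained why every irreducible component must contain an orbit of the maximal possible dimension, and in general this kind of orbit-stratification argument does not automatically yield equidimensionality. The paper, for its part, does not supply a separate argument for purity either---it simply states the lemma and records the fiber computation. Since the only subsequent use of the lemma (in \Cref{prop:counting-by-shape}) needs just the dimension formula $\dim(\Cent M\cap\Cl M)=\dim\D_\Sh-\dim\Cent\Sh$, not purity, this gap is harmless for the paper's purposes. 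Your extra machinery is therefore unnecessary for what is actually used, and incomplete for what it purports to show.

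One small slip: the fibers $U(\Cent\Sh)^\times$ are \emph{left} cosets, not right cosets (though this has no effect on the argument).
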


\begin{proof}
	Replacing $M$ by a conjugate, we can assume without loss of generality that $M = A_{\Sh,\lambda}$ for some $\lambda\in \Y_\Sh$.
	Then, the regular map
	\[
		\D_\Sh \to \Mat_n(\bar\F_p),\qquad U\mapsto UA_{\Sh,\lambda}U^{-1}
	\]
	has image $\Cent M\cap\Cl M$, and each fiber is a left coset of~$(\Cent\Sh)^\times$.
\end{proof}

For any matrix $M\in\Mat_n(\bar\F_p)$, let
\[
	d(M)
	:=
	(\textnormal{number of distinct eigenvalues of~$M$})
	+
	\dim\bigl(
		{\Cent M\cap\Cl M}
	\bigr).
\]

\begin{proposition}
	\label{prop:counting-by-shape}
	Let $\Sh$ be any Jordan shape of size $n$ and let $M\in\Mat_n(\bar\F_p)$ be any matrix with $\Sh_M\simeq\Sh$.
	Then, there is an integer $c\geq1$ and a finite field $\F_{q_0} \supseteq \F_p$ such that:
	\begin{enumalpha}
		\item
			$|\A_\Sh(\F_q)| \leq c\cdot q^{d(M)} + O_{p,n}(q^{d(M)-1/2})$ for all finite fields $\F_q \supseteq \F_p$.
		\item
			$|\A_\Sh(\F_q)| = c\cdot q^{d(M)} + O_{p,n}(q^{d(M)-1/2})$ for all finite fields $\F_q \supseteq \F_{q_0}$.
	\end{enumalpha}
\end{proposition}

\begin{proof}
	By \Cref{prop:dimension-of-XS} and \Cref{lem:dim-cent-cl}, the constructible set $\A_\Sh$ has dimension $d(M)$.
	The claims follow from the Lang--Weil bound \cite[Theorem~1]{lang-weil}, where~$c$ is the number of $d(M)$-dimensional irreducible components of~$\A_\Sh$, and $\F_{q_0}$ is any finite field over which these irreducible components are all defined.
\end{proof}

\Cref{thm:1-all} follows from \Cref{prop:counting-by-shape} by summing over all shapes corresponding to non-dia\-go\-na\-li\-za\-ble matrices.

\begin{remark}
	We do not know whether for any $n\geq3$, there is a non-diagonalizable matrix $M$ for which~$d(M)$ is larger than or equal to the exponent $\lfloor n^2/3\rfloor+1$ obtained for diagonalizable matrices in \Cref{thm:1-diag}.
	The largest value which we have been able to obtain for nilpotent matrices is $d(M) = \lfloor n(n-1)/3\rfloor + 1$, for the nilpotent matrix $M$ with one Jordan block of size $[n/3]+1$ and $n-[n/3]-1$ Jordan blocks of size $1$.
\end{remark}

\begin{remark}
	Some computations of $\dim\bigl({\Cent \Sh \cap \Cl A_{\Sh, \lambda}}\bigr)$ exist in the literature, centered mostly around the nilpotent case (i.e., $r=1$, $\lambda = (0)$).
	In particular, in that case, an upper bound is given by the dimension of the space of nilpotent matrices in~$\Cent \Sh$, that is
	$
		\sum_{k \geq 1} e(0, k)^2 - e(0,1)
	$,
	and equality holds if and only if $\Sh$ is \emph{self-large}, meaning that $e(0,k) - e(0, k+2) \leq 1$ for all~$k$, i.e., any two distinct Jordan blocks have sizes differing by at least $2$.
	(In that case, a generic nilpotent matrix in $\Cent \Sh$ automatically has shape $\Sh$.)
	We refer to \cite{panyushev} for details concerning this case.
\end{remark}

\section{Matrices with eigenspaces defined over~$\F_p$ and commuting with their Frobenius}
\label{subsn:eigenspaces-def-over-fp}

In this section, in order to illustrate the principle described in \Cref{sn:general}, we deal with a special case: the set $\A^\eig$ of matrices $M \in \A$ whose eigenspaces $\ker(M-\lambda I_n)$ are all defined over~$\F_p$.
Specifically, we determine the asymptotics of $|\A^\eig(\F_q)|$, i.e., we prove \Cref{thm:eig} (which is \Cref{thm:eig-intro}).

This case is made accessible by the following observation:

\begin{lemma}
	\label{lem:commute-incl-kernels}
	Let $A$ and $B$ be two commuting matrices in $\Mat_n(\bar\F_p)$.
	\begin{enumroman}
		\item
			\label{lem:commute-incl-kernels-i}
			If $\ker A \subseteq \ker B$, then $\ker A^k \subseteq \ker B^k$ for all $k \geq 1$.
		\item
			\label{lem:commute-incl-kernels-ii}
			If $\ker (A-\lambda I_n) = \ker (B-\lambda I_n)$ for all $\lambda \in \bar \F_p$, then $\ker (A-\lambda I_n)^k = \ker (B-\lambda I_n)^k$ for all $\lambda \in \bar \F_p$ and $k \geq 1$.
			In particular, the matrices $A$ and $B$ are conjugate.
	\end{enumroman}
\end{lemma}

\begin{proof}
	We prove~\ref{lem:commute-incl-kernels-i} by induction on $k$: the case $k=1$ is clear.
	Let $k\geq 2$ and assume that $\ker A^{k-1} \subseteq \ker B^{k-1}$.
	Let $x \in \ker A^k$.
	Then, $A(x) \in \ker A^{k-1} \subseteq \ker B^{k-1}$, so $AB^{k-1}(x) = B^{k-1} A(x) = 0$, so $B^{k-1}(x) \in \ker A \subseteq \ker B$, so $B^k(x)=0$.

	For~\ref{lem:commute-incl-kernels-ii}, we reason for a fixed $\lambda$.
	Subtracting $\lambda I_n$ from $A$ and $B$, we may assume that $\lambda = 0$.
	The inclusion $\ker A^k \subseteq \ker B^k$ and the reverse inclusion then both follow from~\ref{lem:commute-incl-kernels-i}.
\end{proof}

\begin{corollary}
	\label{cor:gen-eig-def}
	If $M \in \A^\eig$, then the generalized eigenspaces $\ker(M-\lambda_i I_n)^k$ of~$M$ are all defined over~$\F_p$.
\end{corollary}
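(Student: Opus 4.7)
The plan is to apply \Cref{lem:commute-incl-kernels}\ref{lem:commute-incl-kernels-i} to the pair of commuting matrices $(M-\lambda I_n, \sigma(M)-\sigma(\lambda) I_n)$ for each eigenvalue $\lambda$ of $M$. Fix $M \in \A^\eig$ and an eigenvalue $\lambda$ of $M$, and set $A \coloneq M - \lambda I_n$ and $B \coloneq \sigma(M) - \sigma(\lambda) I_n = \sigma(A)$. Since $M$ and $\sigma(M)$ commute (as $M \in \A$), so do $A$ and $B$.

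The first key observation is that $\ker A = \ker(M - \lambda I_n)$ is the eigenspace $E_\lambda$ of $M$ at $\lambda$, while $\ker B = \sigma(E_\lambda)$. By hypothesis, $E_\lambda$ is defined over $\F_p$, i.e., $\sigma$-invariant, so $\ker A = \ker B$. Applying \Cref{lem:commute-incl-kernels}\ref{lem:commute-incl-kernels-i} in both directions then yields $\ker A^k = \ker B^k$ for every $k \geq 1$.

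Finally, since $\sigma$ is a ring automorphism of $\Mat_n(\bar\F_p)$, we have $B^k = \sigma(A)^k = \sigma(A^k)$ and therefore $\ker B^k = \sigma(\ker A^k)$. The equality $\ker A^k = \sigma(\ker A^k)$ shows that the generalized eigenspace $\ker(M - \lambda I_n)^k = \ker A^k$ is $\sigma$-invariant, and hence defined over $\F_p$, as desired.

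There is no serious obstacle here: the whole argument is an immediate consequence of \Cref{lem:commute-incl-kernels}, once one notices that $\sigma(M)$ is the right commuting partner to pair with $M$ and that the $\sigma$-invariance of $E_\lambda$ translates exactly into the kernel equality $\ker A = \ker B$ required to apply the lemma.
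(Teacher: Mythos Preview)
Your proof is correct and follows essentially the same approach as the paper: both set $A = M - \lambda I_n$ and $B = \sigma(M) - \sigma(\lambda) I_n$, observe that these commute with equal kernels, and apply \Cref{lem:commute-incl-kernels}\ref{lem:commute-incl-kernels-i} in both directions. Your write-up is slightly more explicit about why $\ker B^k = \sigma(\ker A^k)$, but the argument is the same.
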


\begin{proof}
	The space $\ker(M-\lambda_i I_n)^k$ is defined over $\F_p$ if and only if $\ker(M-\lambda_i I_n)^k = \ker(\sigma(M)-\sigma(\lambda_i) I_n)^k$.
	Since  $M \in \A^\eig$, the matrices $M-\lambda_i I_n$ and $\sigma(M)-\sigma(\lambda_i) I_n$ commute and have equal kernels (this is the case $k=1$).
	Both inclusions between $\ker(M-\lambda_i I_n)^k$ and $\ker(\sigma(M)-\sigma(\lambda_i) I_n)^k$ then follow from \iref{lem:commute-incl-kernels}{lem:commute-incl-kernels-i}.
\end{proof}

For each Jordan shape $\Sh = (\{1,\ldots,r\},e)$, let $\A^\eig_\Sh$ be the subset of~$\A^\eig$ consisting of those matrices whose Jordan shape is isomorphic to $\Sh$.
Note that $\A^\eig = \bigsqcup_{\Sh\in\JS_n} \A^\eig_\Sh$.

\begin{proposition}
	Let $\Sh = (\{1,\ldots,r\},e)$ be a Jordan shape, and let $\lambda = (\lambda_1, \ldots, \lambda_r) \in \Y_\Sh$.
	The set~$\A^\eig_\Sh$ is a non-empty constructible subset of~$\Mat_n(\bar\F_p)$ of pure dimension $r + \dim \E_{\Sh,\lambda}$, where~$\E_{\Sh,\lambda}$ is the following locally closed subset of~$\Mat_n(\bar\F_p)$:
	\[
		\E_{\Sh,\lambda} :=
		\Bigl\{
			B \in \Cent \Sh
			\;\Big\vert\;
			\ker (B - \lambda_i I_n) = \ker (A_{\Sh, \lambda} - \lambda_i I_n)
			\textnormal{ for all }
			1 \leq i \leq r
		\Bigr\}.
	\]
\end{proposition}

\begin{proof}
	The eigenspace $E_i := \ker (A_{\Sh, \lambda} - \lambda_i I_n)$ is by definition defined over $\F_p$.
	If $M = U A_{\Sh, \lambda} U^{-1}$, then the eigenspace $\ker (M - \lambda_i I_n) = U(E_i)$ is defined over~$\F_p$ if and only if $(U^{-1} \sigma(U)) (E_i) = E_i$.
	Letting $\D'_\Sh$ be the set of matrices $U \in \D_\Sh$ such that $U(E_i) = E_i$ for all $i \in \{1, \ldots, r\}$, the same proof as \Cref{prop:dimension-of-XS} shows that~$\A^\eig_\Sh$ has dimension $r + \dim \D'_\Sh - \dim \Cent \Sh$.
	Thus, it suffices to prove that $\E_{\Sh,\lambda}$ has pure dimension $\dim \D'_\Sh - \dim \Cent\Sh$.
	Note that $\E_{\Sh,\lambda} \subseteq \Cl A_{\Sh, \lambda}$ by \iref{lem:commute-incl-kernels}{lem:commute-incl-kernels-ii}.
	The computation is then analogous to the proof of \Cref{lem:dim-cent-cl}.
\end{proof}

We now compute the dimension of~$\E_{\Sh,\lambda}$:

\begin{proposition}
	\label{prop:compute-dim-esh}
	Consider a shape $\Sh = (\{1,\ldots,r\},e)$ and a tuple $\lambda = (\lambda_1, \ldots, \lambda_r) \in \Y_\Sh$.
	Then:
	\begin{enumroman}
		\item
			\label{prop:compute-dim-esh-i}
			We have an isomorphism of varieties
			$\E_{\Sh, \lambda} \simeq \prod_i \E_{\Sh_i, \lambda_i}$, where
			$
				\Sh_i
				:=
				\Bigl(
					\{i\}, \, (i,k) \mapsto e(i,k)
				\Bigr)
			$ is the subshape for the eigenvalue~$\lambda_i$.
		\item
			$\dim \E_{\Sh, \lambda} = \sum_{i=1}^r \sum_{k \geq 1} e(i,k)\cdot e(i,k+1)$
	\end{enumroman}
\end{proposition}

\begin{proof}
	~
	\begin{enumerate}[label=(\roman*)]
		\item
			Let $B \in \E_{\Sh, \lambda}$.
			Since $B$ commutes with $A_{\Sh, \lambda}$, it preserves the generalized eigenspace $G_{\lambda_i}$ for each eigenvalue~$\lambda_i$, inducing maps $B_i \colon G_{\lambda_i} \to G_{\lambda_i}$ which are easily checked to belong to $\E_{\mathcal S_i, \lambda_i}$.
			We have $\bigoplus_{i=1}^r G_{\lambda_i} = \bar\F_p^n$, so $B$ can be reconstructed from the restricted maps $B_i \colon G_{\lambda_i} \to G_{\lambda_i}$.
			We have described two inverse regular maps.
		\item
			By~\ref{prop:compute-dim-esh-i}, we reduce to the case $r=1$.
			Without loss of generality (subtracting $\lambda I_n$ from everything), we have $\lambda = 0$.
			Then, the claim amounts to \Cref{lem:dim-cent-sameker} below with $A = A_{\Sh, 0}$.
			\qedhere
	\end{enumerate}
\end{proof}

\begin{lemma}
	\label{lem:dim-cent-sameker}
	Let $A$ be a nilpotent endomorphism of an $n$-dimensional vector space $V$.
	Let $e_A(k) := \dim \ker A^k - \dim \ker A^{k-1}$ and let
	$\E_A := \{ B \in \Cent(A) \mid \ker B = \ker A \}$.
	Then:
	\[
		\dim \E_A =
		\sum_{k \geq 1}
			e_A(k)\cdot e_A(k+1).
	\]
\end{lemma}

\begin{proof}
	We actually show that the linear subspace $\E_A' := \{ B \in \Cent A \mid \ker B \supseteq \ker A \}$ has the announced dimension.
	Since $\E_A$ is an open subset of~$\E'_A$ (it is defined by the non-vanishing of certain determinants) and is non-empty (it contains $A$), it is Zariski dense and the result shall follow.

	We reason by induction on the dimension $n$ of~$V$.
	Since $A$ is nilpotent, $\im A$ has strictly smaller dimension than $V$, and $\bar A := A|_{\im A}$ is a nilpotent endomorphism of~$\im A$.
	Moreover,
	\begin{align*}
		e_{\bar A}(k)
		&= \dim (\ker A^k \cap \im A) - \dim (\ker A^{k-1} \cap \im A)
		= \dim A(\ker A^{k+1}) - \dim A(\ker A^k) \\
		&= (\dim \ker A^{k+1} - \dim \ker A) - (\dim \ker A^k - \dim \ker A) = e_A(k+1),
	\end{align*}
	so $\dim \E_{\bar A}' = \sum_{k\geq2} e_A(k)\cdot e_A(k+1)$ by the induction hypothesis.
	It therefore suffices to show that the linear map $f \colon \E_A'\to \E'_{\bar A}$ sending $B$ to its restriction $B|_{\im A}$ is surjective and that its kernel has dimension~$e_A(1)\cdot e_A(2)$.

	Consider an endomorphism $\bar B \colon \im A \to \im A$ in $\E_{\bar A}'$.
	The fiber $f^{-1}(\bar B)$ consists of those endomorphisms $B \colon V \to V$ whose restriction to $\im A$ is $\bar B$, which vanish on $\ker A$, and such that the following diagram commutes:
	\[\begin{tikzcd}
		\im A \dar[swap]{\bar B} \rar[<<-]{A} & V \dar[dashed]{B} \\
		\im A \rar[<<-,swap]{A} & V
	\end{tikzcd}\]
	We pick a complement $C$ of~$\im A + \ker A$ in $V$.
	Since $\bar B\in \E_{\bar A}'$,
	restriction to $C$ defines a bijection between $f^{-1}(\bar B)$ and the set of linear maps $B' \colon C \to V$ such that the following diagram commutes:
	\[\begin{tikzcd}
		\im A \dar[swap]{\bar B} \rar[<-]{A} & C \dar[dashed]{B'} \\
		\im A \rar[<<-,swap]{A} & V
	\end{tikzcd}\]
	In particular, the fibers are non-empty (the map $\bar B \circ A$ factors through the surjection $A \colon V\twoheadrightarrow\im A$), so $f$ is surjective.
	Taking $\bar B = 0$, we see that the kernel of~$f$ is isomorphic to the vector space of linear maps $B' \colon C \to\ker A$, of dimension $\dim \ker A\cdot\dim C$.
	The claim follows since $\dim\ker A = e_A(1)$ and
	\begin{align*}
		\dim C
		&= \dim V - \dim (\im A + \ker A)
		= \dim\im A + \dim\ker A - \dim(\im A+\ker A) \\
		&= \dim(\im A \cap \ker A)
		= \dim A(\ker A^2)
		= \dim \ker A^2 - \dim \ker A
		= e_A(2).
		\qedhere
	\end{align*}
\end{proof}

\begin{proposition}
	\label{prop:shapes-maximal-sameker}
	The maximal value of $\dim \A^\eig_\Sh = r + \sum_{i=1}^r \sum_{k \geq 1} e(i,k)\cdot e(i,k+1)$ over shapes~$\Sh$ of size $n$ is $\lfloor n^2/4 \rfloor + 1$, and it is reached exactly for the following shapes (up to isomorphism), where we represent a shape $\Sh = \bigl(\{1,\dots,r\}, \, e\bigr)$ by the tuple $\bigl( (e(1,1), e(1,2), \ldots), \, \ldots, \, (e(r,1), \ldots) \bigr)$, omitting the trailing zeros:
	\begin{center}\setlength{\tabcolsep}{5ex}\begin{tabular}{cl}
		$n$ & \normalfont optimal shapes
		\\ \hline
		$2$
		&
		$\bigl( (1,1) \bigr)$,
		\quad
		$\bigl( (1), (1) \bigr)$
		\\
		$3$
		&
		$\bigl( (2,1) \bigr)$,
		\quad
		$\bigl( (1,1,1) \bigr)$,
		\quad
		$\bigl( (1,1), (1) \bigr)$,
		\quad
		$\bigl( (1), (1), (1) \bigr)$
		\\
		$2m$, $m \geq 2$
		&
		$\bigl( (m,m) \bigr)$
		\\
		$2m + 1$, $m \geq 2$
		&
		$\bigl( (m+1,m) \bigr)$,
		\quad
		$\bigl( (m,m,1) \bigr)$
	\end{tabular}\end{center}
\end{proposition}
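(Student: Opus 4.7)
The plan is to decompose $\dim \A^\eig_\Sh = r + \sum_{i=1}^r G(\Sh_i)$ where $G(\Sh_i) \coloneq \sum_{k \geq 1} e(i,k) e(i,k+1)$ depends only on the subshape at the $i$-th eigenvalue, bound each $G(\Sh_i)$ in terms of $n_i \coloneq \sum_k e(i,k)$, and then combine with a separate bound on the partition $(n_1, \ldots, n_r)$ of $n$.

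First I would prove the per-eigenvalue estimate $G(\Sh_i) \leq \lfloor n_i^2/4 \rfloor$. Writing $a_k \coloneq e(i,k)$ and splitting indices by parity, set $O \coloneq \sum_{k \textnormal{ odd}} a_k$ and $E \coloneq \sum_{k \textnormal{ even}} a_k$; since each product $a_k a_{k+1}$ pairs one odd with one even index, the expansion of $OE$ contains $\sum_k a_k a_{k+1}$ as a subsum, so $G(\Sh_i) \leq OE \leq \lfloor n_i^2/4 \rfloor$ using $O + E = n_i$ and $x(n_i - x) \leq \lfloor n_i^2/4 \rfloor$. Equality in the first step forces all cross products $a_j a_{j'}$ with $|j - j'| \geq 3$ to vanish, hence $a_k = 0$ for $k \geq 4$; equality in the second forces $\{O, E\} = \{\lceil n_i/2 \rceil, \lfloor n_i/2 \rfloor\}$. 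Combined with the monotonicity $a_1 \geq a_2 \geq a_3 \geq 0$ and $\sum a_k = n_i$, a short case analysis on whether $a_3 = 0$ shows the extremal subshapes are $(m, m)$ when $n_i = 2m$, and $(m+1, m)$ or $(m, m, 1)$ when $n_i = 2m + 1$ with $m \geq 1$ (and just $(1)$ when $n_i = 1$).

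Next I would establish the merge inequality $\lfloor (a+b)^2/4 \rfloor \geq \lfloor a^2/4 \rfloor + \lfloor b^2/4 \rfloor + 1$ for integers $a, b \geq 1$, with equality exactly when $\{a, b\} \in \{\{1,1\}, \{1, 2\}\}$; this follows from $(a+b)^2 - a^2 - b^2 = 2ab$ by short parity casework. Iterating it along a merge sequence combining $n_1, \ldots, n_r$ into $n$ yields $\sum_i \lfloor n_i^2/4 \rfloor \leq \lfloor n^2/4 \rfloor - (r - 1)$, with equality only if every merge is of a cost-$1$ type. Since the final merge alone produces a piece of size $n \leq 3$, and a short backward induction on the merge sequence pins down the admissible configurations, the only cases where equality holds with $r \geq 2$ are $(n, r) = (2, 2)$ with partition $1 + 1$, $(n, r) = (3, 2)$ with partition $1 + 2$, and $(n, r) = (3, 3)$ with partition $1 + 1 + 1$.

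Chaining the two estimates gives $\dim \A^\eig_\Sh \leq \lfloor n^2/4 \rfloor + 1$ with simultaneous equality iff both bounds saturate. Substituting the optimal subshapes from the first step into the admissible partitions of the second step recovers the table exactly: for $n \geq 4$ only $r = 1$ contributes, giving $\bigl((m, m)\bigr)$ for $n = 2m$ and both $\bigl((m+1, m)\bigr)$ and $\bigl((m, m, 1)\bigr)$ for $n = 2m + 1$; for $n \in \{2, 3\}$ the extra multi-eigenvalue partitions, combined with the short list of extremal subshapes of sizes $1, 2, 3$, produce the remaining entries. The main obstacle will be the equality bookkeeping: simultaneously tracking the per-eigenvalue saturation condition and the partition-merge saturation condition so that the enumeration of optimal $\Sh$ is exhaustive and non-redundant.
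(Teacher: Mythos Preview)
Your proposal is correct and follows the same overall strategy as the paper: bound each per-eigenvalue term by $\lfloor n_i^2/4 \rfloor$, then use the (strict) superadditivity of $\eta(m) = \lfloor m^2/4\rfloor + 1$ on the partition $(n_1,\dots,n_r)$ to force $r=1$ except when $n\in\{2,3\}$, and finally enumerate. The only real difference is in how the per-eigenvalue bound is obtained: the paper uses the monotonicity estimate $\sum_k a_k a_{k+1} \le a_1\sum_{k\ge 2} a_k = a_1(n_i-a_1)$ (with equality iff $a_1=\dots=a_{s-1}$), while you use the odd/even split $\sum_k a_k a_{k+1}\le OE$; both routes identify the same extremal subshapes, and the paper's version is slightly shorter since its equality condition already forces the block structure without a separate $a_k=0$ ($k\ge4$) argument.
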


\begin{proof}
	First, we consider only shapes with $r=1$.
	Let $\Sh = (\{1\}, e)$, and let $s$ be such that $e(1,s) \neq 0$ and $e(1,s+1) = 0$.
	We have
	\[
		\dim \A^\eig_\Sh
		=
		1+
		\sum_{k=1}^{s-1} e(1,k) e(1,k+1)
		\leq
		1+ \sum_{k=1}^{s-1} e(1,1) e(1,k+1)
		=
		1+ e(1,1) \cdot \bigl(n-e(1,1)\bigr),
	\]
	with equality if and only if $e(1,1) = e(1,2) = \ldots = e(1,s-1)$.
	Since $e(1,1)$ is an integer, $1+ e(1,1) \cdot \bigl(n-e(1,1)\bigr)$ has maximal value $1 + \lfloor n/2 \rfloor \cdot \lceil n/2 \rceil = 1 + \lfloor n^2 / 4 \rfloor$, reached exactly when $e(1,1) \in \{ \lfloor n/2 \rfloor, \lceil n/2 \rceil \}$.
	If $n$ is even, only the shape $\bigl( (\frac n2, \frac n2) \bigr)$ gives equality.
	If $n$ is odd, distinguishing between the two possible values of~$e(1,1)$ gives the two equality cases with $r=1$.

	Now, consider the case of a general shape $\Sh = (\{1, \ldots, r\}, e)$.
	By the case $r=1$, we have
	\[
		r + \sum_{i=1}^r \sum_{k \geq 1} e(i,k)\cdot e(i,k+1)
		=
		\sum_{i=1}^r \left( 1 + \sum_{k \geq 1} e(i,k)\cdot e(i,k+1) \right)
		\leq
		\sum_{i=1}^r \left( 1 + \left\lfloor \frac{\bigl(\sum_{k\geq1} e(i,k)\bigr)^2}4 \right\rfloor \right).
	\]
	However, the function $\eta(n) := \lfloor n^2/4 \rfloor + 1$ is strictly superadditive except for the equalities $\eta(1)+\eta(1) = \eta(2)$ and $\eta(1)+\eta(2) = \eta(1)+\eta(1)+\eta(1) = \eta(3)$.
	Therefore, we must have $r = 1$ if $n > 3$, and the cases $n \in \{2,3\}$ are quickly dealt with.
\end{proof}

It remains only to obtain estimates for $|\A^\eig_\Sh (\F_q)|$ when $\Sh$ is one of the optimal shapes of \Cref{prop:shapes-maximal-sameker}.
For this, we are going to need the following two lemmas:

\begin{lemma}
	\label{lem:count-matrices-vw}
	Let $a\geq1$ and let $\vec v,\vec w\in\F_q^a$ be non-zero vectors.
	The number of matrices $N\in\GL_a(\F_q)$ satisfying $N\vec w = \sigma(N)\vec v$ is $q^{a(a-1)}+O_{p,a}(q^{a(a-1)-1})$ if $\vec v$ and $\sigma(\vec w)$ are linearly independent, and $O_{p,a}(q^{a(a-1)})$ otherwise.
\end{lemma}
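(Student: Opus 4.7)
My plan is to rewrite the equation using the identity $\sigma(N)\vec v = \sigma\bigl(N\sigma^{-1}(\vec v)\bigr)$, so that $N\vec w = \sigma(N)\vec v$ becomes $NT = \bigl[\sigma(\vec u_2) \,\big|\, \vec u_2\bigr]$, where $T \coloneq \bigl[\vec w \,\big|\, \sigma^{-1}(\vec v)\bigr] \in \Mat_{a \times 2}(\F_q)$ and $\vec u_2 \coloneq N\sigma^{-1}(\vec v)$. Since $\vec v$ and $\sigma(\vec w)$ are linearly independent if and only if $T$ has rank~$2$, this naturally separates the two cases.

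In the independent case (which forces $a \geq 2$), I would extend $T$ to an invertible $a \times a$ matrix $\tilde T = \bigl[T \,\big|\, T'\bigr]$ with $T' \in \Mat_{a \times (a-2)}(\F_q)$. Then $N \in \Mat_a(\F_q)$ satisfies the equation if and only if $N\tilde T = \bigl[\sigma(\vec u_2) \,\big|\, \vec u_2 \,\big|\, Y\bigr]$ for some pair $(\vec u_2, Y) \in \F_q^a \times \Mat_{a \times (a-2)}(\F_q)$, and such an $N$ is invertible if and only if $\sigma(\vec u_2), \vec u_2, \vec y_1, \ldots, \vec y_{a-2}$ form a basis of $\F_q^a$.

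The crucial step is to bound the set of ``bad'' vectors $\vec u_2 \in \F_q^a$ with $\sigma(\vec u_2) \in \F_q \cdot \vec u_2$: the condition $\sigma(\vec u_2) = c \vec u_2$ forces each non-zero component $u_i$ to satisfy $u_i^{p-1} = c$, so all non-zero components lie in a common $\F_p^\times$-coset of $\F_q^\times$, giving only $O_{p,a}(q)$ such vectors. For each good $\vec u_2$, the number of valid completions $Y$ is $\prod_{k=2}^{a-1}(q^a - q^k) = q^{a(a-2)} + O_{p,a}(q^{a(a-2)-1})$, while bad $\vec u_2$ contribute at most $O_{p,a}(q) \cdot q^{a(a-2)} = O_{p,a}(q^{(a-1)^2})$, which is absorbed into $O_{p,a}(q^{a(a-1)-1})$ for $a \geq 2$. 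Multiplying yields the claimed asymptotic.

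For the dependent case, $\vec v = c\,\sigma(\vec w)$ for some $c \in \F_q^\times$ reduces the equation to $\vec u \coloneq N\vec w$ satisfying $\sigma(\vec u) = c^{-1}\vec u$. Componentwise this requires $u_i = 0$ or $u_i^{p-1} = c^{-1}$, so there are at most $p^a = O_{p,a}(1)$ valid $\vec u$; for each, at most $q^{a(a-1)}$ matrices $N$ satisfy $N\vec w = \vec u$, giving the total bound $O_{p,a}(q^{a(a-1)})$. The main technical obstacle is the bad-vector count in the independent case: although the observation is elementary, it is what pins the leading constant to exactly~$1$ rather than a larger power of~$p$.
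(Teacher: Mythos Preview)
Your proof is correct and follows essentially the same strategy as the paper's: the paper reduces via the substitution $(\vec v,\vec w)\mapsto(\sigma(U)\vec v,U\vec w)$, $N\mapsto NU^{-1}$ to the case $\vec v=\vec e_1$, $\vec w=\vec e_2$, so that the equation says the second column of $N$ is the Frobenius of the first; your right-multiplication by $\tilde T$ achieves exactly the same normalization, and both arguments then count invertible matrices with a Frobenius-linked pair of columns. Your treatment is in fact more explicit than the paper's about the ``bad vector'' count (the paper simply asserts that the number of such invertible matrices is as claimed), but the underlying idea is the same.
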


\begin{proof}
	Assume first that $\vec v$ and $\sigma(\vec w)$ are linearly independent.
	Replacing $(\vec v,\vec w)$ by $(\sigma(U)\vec v,U\vec w)$ for an appropriate $U\in\GL_a(\F_q)$, we can assume without loss of generality that $\vec v=\vec e_1$ and $\vec w=\vec e_2$ are the first two standard basis vectors.
	Then, $N\vec w=\sigma(N)\vec v$ means that the second column of~$N$ is deduced from the first column by applying~$\sigma$.
	The number of invertible matrices satisfying this condition is as claimed.
	
	Now, assume that $\sigma(\vec w) = \lambda \vec v$ for some $\lambda \in \F_q^\times$.
	Replacing $(\vec v,\vec w)$ by $(\sigma(U)\vec v,U\vec w)$ for an appropriate matrix $U\in\GL_a(\F_q)$, we can assume that $\vec v=\vec e_1$ and $\vec w=\sigma^{-1}(\lambda)\vec e_1$.
	The condition $N\vec w=\sigma(N)\vec v$ then leaves at most $p^a=O_{p,n}(1)$ options for the first column of~$N$.
\end{proof}

\begin{lemma}
	\label{lem:shapes-w}
	Let $m \geq 1$.
	For any filtration of linear subspaces $0=V_0\subseteq \cdots \subseteq V_s=\bar\F_p^m$, where each $V_k$ is defined over~$\F_p$, the number of (nilpotent) matrices $M\in\Mat_m(\F_q)$ commuting with $\sigma(M)$ and such that $\ker M^k = V_k$ for all $k\in\{1,\dots,s\}$ only depends on $q$ and on the numbers $e(k) := \dim (V_k/V_{k-1})$.
	We denote this count by $w_q(e(1),\dots,e(s))$ (we omit trailing zeros in the notation, i.e., this means $e(k) = 0$ for $k \geq s+1$).
	Moreover:
	\begin{enumalpha}
		\item
			For any $m \geq 1$, we have $w_q(m) = 1$.
		\item
			For any $a \geq b \geq 1$ with $a+b=m$, we have $w_q(a,b) = q^{ab} + O_{p,a,b}(q^{ab-1})$.
		\item
			For any $a \geq 1$ with $2a+1=m$, we have $w_q(a,a,1) = q^{a(a+1)} + O_{p,a}(q^{a(a+1)-1})$.
	\end{enumalpha}
\end{lemma}

\begin{proof}
	Conjugating by an element of~$\GL_m(\F_p)$, we can assume without loss of generality that each~$V_k$ is generated by the first $\dim V_k = e(1) + \cdots + e(k)$ standard basis vectors of~$\F_p^m$.
	In particular, this proves the well-definedness of $w_q(e(1),\dots,e(s))$.
	\begin{enumalpha}
		\item
			That $e(1)=m$ implies that $V_1 = \bar\F_p^m$, and only the zero matrix satisfies $\ker M = V_1 = \bar\F_p^m$.
		\item
			The condition $\ker M^k = V_k$ for all $k\in\{1,2\}$ means that $M$ is of the form $M = \left(\begin{smallmatrix}0&N\\0&0\end{smallmatrix}\right)$ for some $a\times b$ matrix $N$ of rank $b$.
			If $M$ is of this form, then so is $\sigma(M)$ and they automatically commute.
			The number of such matrices $N$ with coefficients in $\F_q$ is $q^{ab} + O_{p,a,b}(q^{ab-1})$.
		\item
			The condition $\ker M^k = V_k$ for all $k\in\{1,2,3\}$ means that $M$ is of the form $M = \left(\begin{smallmatrix}0&N&\vec u\\0&0&\vec v\\0&0&0\end{smallmatrix}\right)$ for some invertible $a\times a$ matrix $N\in\GL_a(\F_q)$, some column vector $\vec u\in\F_q^a$, and some non-zero column vector $\vec v\in\F_q^a$.
			If $M$ is of this form, it commutes with $\sigma(M)$ if and only if $N\sigma(\vec v) = \sigma(N)\vec v$.
			Taking $\vec w := \sigma(\vec v)$, the claim then follows from \Cref{lem:count-matrices-vw} by summing over all possible pairs of vectors $\vec u\in\F_q^a$ and $\vec v \in \F_q^a \setminus \{0\}$, as $\vec v$ and $\sigma(\vec w) = \sigma^2(\vec v)$ are linearly independent if and only if $\langle\vec v\rangle$ is not defined over~$\F_{p^2}$, which is the generic case.
		\qedhere
	\end{enumalpha}
\end{proof}

\begin{theorem}
	\label{thm:eig}
	For any finite field $\F_q\supseteq\F_p$, we have
	\[
		|\A^\eig(\F_q)| = c^\eig(p,n)\cdot q^{\lfloor n^2/4\rfloor+1} + O_{p,n}(q^{\lfloor n^2/4\rfloor}),
	\]
	where
	\[
		c^\eig(p,2) = \tfrac12(p+2)(p+1),\qquad\qquad
		c^\eig(p,3) = \tfrac16(p^2+p+1)(p^4+7p^3+6p^2+6p+12),
	\]
	\[
		c^\eig(p,n) = \pbinom{n}{n/2} \quad\textnormal{if $n\geq4$ is even},
	\]
	\[
		c^\eig(p,n) = \pbinom{n}{\lfloor n/2\rfloor} + \pbinom{n}{\lfloor n/2\rfloor}\cdot\pbinom{\lceil n/2\rceil}{1} \quad\textnormal{if $n\geq5$ is odd}.
	\]
\end{theorem}

\begin{proof}
	For any Jordan shape $\Sh$ which is not listed in \Cref{prop:shapes-maximal-sameker}, we have $\dim\A^\eig_\Sh \leq \lfloor n^2/4\rfloor$ and therefore $|\A^\eig_\Sh(\F_q)| = O_{p,n}(q^{\lfloor n^2/4\rfloor})$ by the Schwartz--Zippel bound \cite[Lemma~1]{lang-weil}.

	Now, let $\Sh=(V,e)$ be one of the Jordan shapes listed in \Cref{prop:shapes-maximal-sameker}.
	To construct a matrix $M\in\A^\eig_\Sh$, we choose its $|V|$ (distinct) eigenvalues $\lambda_i$ and the corresponding generalized eigenspaces~$G_i$ of dimension $d(i):=\sum_{k\geq1}e(i,k)$ for all $i$ (which must be defined over~$\F_p$ by \Cref{cor:gen-eig-def}), modulo the automorphisms of~$\Sh$.
	There are $q^{|V|} + O_{p,n}(q^{|V|-1})$ choices for the eigenvalues and $|\GL_n(\F_p)|/\prod_{i\in V}|\GL_{d(i)}(\F_p)|$ choices for the generalized eigenspaces (as one shows using the orbit-stabilizer theorem).
	For each~$i$, we then need to choose the filtration of subspaces $V_{i,k} := \ker(M-\lambda_i I_n)^k$ (each defined over~$\F_p$), satisfying $0=V_{i,0}\subseteq\cdots\subseteq V_{i,s_i}=G_i$, with $\dim(V_{i,k}/V_{i,k-1}) = e(i,k)$.
	The group $\GL_{d(i)}(\F_p)$ acts transitively on such flags.
	Describing the stabilizer of a given flag (by induction on $s_i$) and using the orbit-stabilizer theorem, one shows that the number of such flags for each $i$ is
	\[
		\frac{
			|\GL_{d(i)}(\F_p)|
		}{
			\prod_{k\geq1} |\GL_{e(i,k)}(\F_p)|
			\cdot \prod_{k>l\geq1} p^{e(i,k)\cdot e(i,l)}
		}.
	\]
	Finally, we need to choose for each $i$ the restriction of~$M-\lambda_i I_n$ to the generalized eigenspace $G_i$.
	We estimated the number $w_q(e(i,1),e(i,2),\dots,e(i,s_i))$ of choices for this restriction in \Cref{lem:shapes-w}.
	For any Jordan shape $\Sh=(V,e)$, we then obtain
	\[
		|\A^\eig_\Sh(\F_q)|
		= \frac{|\GL_n(\F_p)|\cdot (q^{|V|} + O_{p,n}(q^{|V|-1}))}{|\Aut(\Sh)|}
		\cdot \prod_{i\in V} \frac{w_q(e(i,1),e(i,2),\dots,e(i,s_i))}{\prod_{k\geq1}|\GL_{e(i,k)}(\F_p)|\cdot\prod_{k>l\geq1}p^{e(i,k)\cdot e(i,l)}}.
	\]
	The claim follows by summing over all the shapes listed in \Cref{prop:shapes-maximal-sameker} and using the formulas given in \Cref{lem:shapes-w}.
\end{proof}

\section{Matrices commuting with their whole Frobenius orbit}
\label{sn:stable}

In this section, we determine the asymptotics of $|\A_\infty^\diag(\F_q)|$ and $|\A_\infty(\F_q)|$, i.e., we prove \Cref{thm:infty-diag,thm:infty-all}.
In \Cref{subsn:two}, we quickly deal with the case $n=2$ (in which case $\A(\F_q) = \A_\infty(\F_q)$).
In \Cref{subsn:decomp-stable}, we reduce the description of $\A_\infty^\diag(\F_q)$ and $\A_\infty(\F_q)$ to the enumeration of certain subalgebras of $\Mat_n(\F_p)$ (\Cref{lemma:stable}).
Then, \Cref{subsn:diagonalizable-stable} and \Cref{subsn:general-stable} deal with $|\A_\infty^\diag(\F_q)|$ and $|\A_\infty(\F_q)|$, respectively.

\subsection{The case of $2 \times 2$ matrices}
\label{subsn:two}

We first quickly deal with the case $n=2$, as it is particularly simple to obtain an exact count in this case, and the behavior is different compared to larger values of~$n$.

\begin{proposition}
	\label{prop:characterization-two-by-two}
	Assume that $n=2$, and let $M \in \Mat_2(\bar\F_p)$.
	The following are equivalent:
	\begin{enumroman}
		\item
			\label{prop:characterization-two-by-two-i}
			$M$ is of the form  $\lambda M' + \mu I_2$ with $\lambda, \mu \in \bar\F_p$ and $M' \in \Mat_2(\F_p)$.
		\item
			\label{prop:characterization-two-by-two-ii}
			$M \in \A_\infty$.
		\item
			\label{prop:characterization-two-by-two-iii}
			$M \in \A$.
	\end{enumroman}
\end{proposition}

\begin{proof}
	Clearly, \ref{prop:characterization-two-by-two-i}~$\Rightarrow$~\ref{prop:characterization-two-by-two-ii}~$\Rightarrow$~\ref{prop:characterization-two-by-two-iii}.
	Assume \ref{prop:characterization-two-by-two-iii}.
	If $M$ is a scalar matrix, \ref{prop:characterization-two-by-two-i} is clear.
	Otherwise, the condition that $M =
	\left(\begin{smallmatrix}
		a & b \\
		c & d
	\end{smallmatrix}\right)$ and $\sigma(M)=\left(\begin{smallmatrix}\sigma(a)&\sigma(b)\\\sigma(c)&\sigma(d)\end{smallmatrix}\right)$ commute rewrites as the following system of equations:
	\[
		\left\lbrace
		\arraycolsep=0.5ex
		\begin{array}{ccc}
			a \sigma(a) + b \sigma(c) & = & a \sigma(a) + c \sigma(b) \\
			a \sigma(b) + b \sigma(d) & = & b \sigma(a) + d \sigma(b) \\
			c \sigma(a) + d \sigma(c) & = & a \sigma(c) + c \sigma(d) \\
			c \sigma(b) + d \sigma(d) & = & b \sigma(c) + d \sigma(d) \\
		\end{array}
		\right.
		\qquad\Longleftrightarrow\qquad
		\left\lbrace
		\arraycolsep=0.5ex
		\begin{array}{ccc}
			b \sigma(c) & = & c \sigma(b) \\
			b\sigma(d-a) & = & (d-a)\sigma(b) \\
			c\sigma(d-a) & = & (d-a)\sigma(c)\mathrlap{,} \\
		\end{array}
		\right.
	\]
	meaning that the point $[b:c:d-a] \in \P^2(\bar\F_p)$ is $\sigma$-invariant, so belongs to~$\P^2(\F_p)$.
	Writing $(b,c,d-a) = \lambda(\beta,\gamma,\delta)$ with $\beta,\gamma,\delta \in \F_p$ and $\lambda \in \bar\F_p^\times$, we have~\ref{prop:characterization-two-by-two-i} with $\mu = a$ and $M' = \left(\begin{smallmatrix}
		0&\beta\\\gamma&\delta
	\end{smallmatrix}\right)$.
\end{proof}

\begin{corollary}
	\label{cor:count-2-by-2}
	Assume that $n=2$, and let $\F_q$ be a finite field containing $\F_p$.
	Then,
	$
		|\A(\F_q)|
		= |\A_\infty(\F_q)|
		= q + (p^2+p+1)(q-1)q.
	$
\end{corollary}

\begin{proof}
	Using \Cref{prop:characterization-two-by-two}, the size of $\A(\F_q) = \A_\infty(\F_q)$ is given by
	\[
		\begin{gathered}[b]
			\underbrace{
				q
			}_{\textnormal{scalar matrices}}
			\; + \;
			\underbrace{
				(p^2+p+1)
			}_{\textnormal{choices of } [b:c:d-a] \in \P^2(\F_p)}
			\cdot
			\underbrace{
				(q-1)
			}_{\substack{
				\textnormal{choices of } (b,c,d-a) \in \F_q^3 \setminus \{(0,0,0)\} \\
				\textnormal{once } [b:c:d-a] \textnormal{ is fixed}
			}}
			\cdot
			\underbrace{
				q
			}_{\textnormal{choices of } a}
			\\[-3\dp\strutbox]
		\end{gathered}
		\qedhere
	\]
\end{proof}

\subsection{The decomposition of $\A_\infty$}
\label{subsn:decomp-stable}

For any field~$K$, we call a subalgebra $A$ of~$\Mat_n(K)$ \emph{diagonalizable} if its elements are simultaneously diagonalizable over~$\bar K$.
In particular, a diagonalizable subalgebra is commutative.
The sets~$\A_\infty^\diag$ and~$\A_\infty$ can be decomposed using the (finitely many) diagonalizable (resp.\ commutative) subalgebras of $\Mat_n(\F_p)$:

\begin{lemma}
	\label{lemma:stable}
	We have
	\[
		\A_\infty^\diag =
		\bigcup_{\substack{A\subseteq\Mat_n(\F_p)\\\textnormal{diagonalizable}\\\textnormal{subalgebra}}} A\otimes_{\F_p}\bar\F_p
		\qquad\quad\textnormal{and}\qquad\quad
		\A_\infty =
		\bigcup_{\substack{A\subseteq\Mat_n(\F_p)\\\textnormal{commutative}\\\textnormal{subalgebra}}} A\otimes_{\F_p}\bar\F_p
		.
	\]
\end{lemma}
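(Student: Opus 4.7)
The plan is to prove each equality by two inclusions, treating the commutative and diagonalizable cases in parallel.

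For the inclusion ``$\supseteq$'', I would start with a commutative (respectively diagonalizable) subalgebra $A \subseteq \Mat_n(\F_p)$ and any $M \in A \otimes_{\F_p} \bar\F_p$. Writing $M = \sum_i c_i a_i$ with $a_i \in A$ and $c_i \in \bar\F_p$, and using that the $a_i$ are $\sigma$-fixed, one gets $\sigma^k(M) = \sum_i \sigma^k(c_i)\, a_i \in A \otimes_{\F_p} \bar\F_p$ for every $k \geq 0$, so pairwise commutativity of the orbit follows from commutativity of $A$. For the diagonalizable refinement, a simultaneous eigenbasis of $A$ over $\bar\F_p$ remains a simultaneous eigenbasis for $A \otimes_{\F_p} \bar\F_p$, so every such $M$ is diagonalizable.

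For ``$\subseteq$'', given $M \in \A_\infty$, the key construction is the $\bar\F_p$-subalgebra $B \subseteq \Mat_n(\bar\F_p)$ generated by the Frobenius orbit $M, \sigma(M), \sigma^2(M), \ldots$ of $M$. I would establish three properties of $B$: it is commutative (generated by pairwise-commuting elements), it is finite-dimensional over $\bar\F_p$ (as a subspace of $\Mat_n(\bar\F_p)$), and---crucially---it is $\sigma$-stable, i.e.\ $\sigma(B) = B$. The inclusion $\sigma(B) \subseteq B$ holds because every element of $B$ is a polynomial in the Frobenius orbit with $\bar\F_p$-coefficients, and applying $\sigma$ shifts the orbit while $\sigma$-conjugating the coefficients. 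Equality then follows by a semilinear dimension argument: if $b_1, \ldots, b_d$ is a $\bar\F_p$-basis of $B$, then $\sigma(b_1), \ldots, \sigma(b_d)$ span $\sigma(B)$ over $\bar\F_p$ and remain $\bar\F_p$-linearly independent, because $\sigma$ is bijective on $\bar\F_p$ and injective on $\Mat_n(\bar\F_p)$.

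Once $\sigma(B) = B$ is in hand, Galois descent for $\sigma$-stable $\bar\F_p$-subspaces of $\Mat_n(\bar\F_p) = \Mat_n(\F_p) \otimes_{\F_p} \bar\F_p$ yields $B = A \otimes_{\F_p} \bar\F_p$ with $A \coloneq B \cap \Mat_n(\F_p)$; this $A$ is automatically a commutative $\F_p$-subalgebra containing $M$. For the diagonalizable refinement, I would additionally invoke that pairwise-commuting diagonalizable matrices are simultaneously diagonalizable (noting that each $\sigma^k(M)$ is diagonalizable whenever $M$ is, since $\sigma$ conjugates a diagonalization into a diagonalization): in a common eigenbasis every $\sigma^k(M)$---and hence every element of $B$ and of $A$---is diagonal. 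The only delicate step is the equality $\sigma(B) = B$ rather than the a priori easier inclusion, since that is what makes descent applicable; the rest is routine unpacking.
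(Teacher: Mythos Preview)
Your proposal is correct and follows essentially the same route as the paper: both directions are argued exactly as you describe, with the $\bar\F_p$-algebra generated by the Frobenius orbit playing the key role and Galois descent applied to its $\sigma$-invariance. You give more detail than the paper on why $\sigma(B)=B$ (rather than just $\sigma(B)\subseteq B$); note that this also follows immediately from the fact that the Frobenius orbit of $M$ is finite (since the entries of $M$ lie in some $\F_{p^r}$), so $\sigma$ permutes the generating set.
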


\begin{proof}
	The inclusions $\supseteq$ are clear: if $M\in A\otimes_{\F_p}\bar\F_p$ for a commutative subalgebra $A$ of~$\Mat_n(\F_p)$, then the matrices $\sigma^i(M)$ for $i = 0,1,...$ all belong to the commutative algebra $A\otimes_{\F_p}\bar\F_p$, hence commute with each other.
	If moreover $A$ is diagonalizable, then so is $M$.

	For the inclusions $\subseteq$, consider any matrix $M\in\A_\infty$.
	Since the matrices $M,\sigma(M),\dots$ commute, they generate a commutative $\bar\F_p$-subalgebra $R$ of~$\Mat_n(\bar\F_p)$.
	This subalgebra is $\sigma$-invariant, so by Galois descent we have $R=A\otimes_{\F_p}\bar\F_p$ for some commutative subalgebra $A$ of~$\Mat_n(\F_p)$, proving the second equality.
	If moreover $M\in\A_\infty^\diag$, then the commuting matrices $M,\sigma(M),\dots$ are diagonalizable, hence they are simultaneously diagonalizable.
	Any common eigenbasis of these matrices is in fact a common eigenbasis of all matrices in $R = A \otimes_{\F_p} \bar\F_p$, so $A \subseteq \Mat_n(\F_p)$ is a diagonalizable subalgebra.
\end{proof}

As a consequence of \Cref{lemma:stable} (using inclusion--exclusion), the size of~$\A_\infty^\diag(\F_q)$ (resp.~of~$\A_\infty (\F_q)$) is a polynomial in~$q$, whose degree and leading coefficient are given by the dimension and the number of the maximal-dimensional diagonalizable (resp.\ commutative) subalgebras of~$\Mat_n(\F_p)$.
We compute these numbers in \Cref{subsn:diagonalizable-stable} and \Cref{subsn:general-stable}, respectively.

\begin{remark}
	\Cref{lemma:stable} applies not just for matrices, but in any context where Galois descent holds.
	For example, if $\mathfrak g$ is a Lie $\F_p$-algebra
	, then a similar description holds for the elements of $\mathfrak g \otimes_{\F_p} \bar\F_p$
	commuting with their Frobenius orbit, reducing to the enumeration of the abelian Lie subalgebras of $\mathfrak g$ (cf.~\cite[Lemma~6.3]{wildcount} for an example).
	Similar statements hold if~$R$ is a $\F_p$-algebra and we want to describe the elements of $R \otimes \bar\F_p$ commuting with their Frobenius orbit (\Cref{lemma:stable} is the example $R = \Mat_n(\F_p)$), or if $\mathcal G$ is an algebraic group over $\F_p$ and we want to describe the elements of $\mathcal G(\bar\F_p)$ commuting with their Frobenius orbit, etc.
\end{remark}

\subsection{Diagonalizable matrices}
\label{subsn:diagonalizable-stable}

\begin{lemma}
	\label{lem:param-subalg-geomconj}
	~
  \begin{enumalpha}
	\item
		\label{item:param-subalg-geomconj-dim}
		Every diagonalizable subalgebra of~$\Mat_n(\bar\F_p)$ has dimension at most $n$.
	\item
		\label{item:param-subalg-geomconj-bij}
		There is a bijection between the set of $n$-dimensional diagonalizable subalgebras $A$ of~$\Mat_n(\bar\F_p)$ and the set of unordered $n$-tuples $\{E_1, \ldots, E_n\}$ of one-dimensional subspaces of~$\bar\F_p^n$ such that $E_1 \oplus \ldots \oplus E_n = \bar \F_p^n$.
	\item
		\label{item:param-subalg-geomconj-def}
		An $n$-dimensional diagonalizable subalgebra $A$ is defined over~$\F_p$ if and only if the corresponding unordered tuple $\{E_1, \ldots, E_n\}$ is $\sigma$-invariant, i.e., if there is a permutation $\pi \in \Sym_n$ such that $\sigma(E_i) = E_{\pi(i)}$.
	\item
		\label{item:param-subalg-geomconj-max-tori}
		There is a bijection between the set of $n$-dimensional diagonalizable subalgebras $A$ of~$\Mat_n(\bar\F_p)$ and the set of maximal tori in $\GL_n$.
		Moreover, $A$ is defined over~$\F_p$ if and only if the corresponding maximal torus is $\sigma$-invariant.
  \end{enumalpha}
\end{lemma}

\begin{proof}
	Let $A$ be any diagonalizable subalgebra of~$\Mat_n(\bar\F_p)$, and pick a common eigenbasis $\mathcal B = (e_1,\dots,e_n)$ of the matrices in $A$, so that
	every matrix in $A$ is diagonal when expressed in $\mathcal B$.
	We immediately obtain \ref{item:param-subalg-geomconj-dim}, and we see that if $A$ is $n$-dimensional, then it consists of all matrices which are diagonal with respect to $\mathcal B$.
	In this case, $\mathcal B$ is unique up to permutation and rescaling, as the spaces $\langle e_i \rangle$ are exactly the one-dimensional subspaces which are invariant under all matrices in $A$.
	Thus, $A\mapsto\{\langle e_1\rangle,\dots,\langle e_n\rangle\}$ defines a bijection as in \ref{item:param-subalg-geomconj-bij}.
	For \ref{item:param-subalg-geomconj-def}, note that if $e_1,\dots,e_n$ is a common eigenbasis of~$A$, then $\sigma(e_1),\dots,\sigma(e_n)$ is a common eigenbasis of~$\sigma(A)$.
	Combined with~\ref{item:param-subalg-geomconj-bij}, this implies that $A$ is fixed by $\sigma$ if and only if $\sigma$ permutes the eigenspaces $\langle e_1\rangle,\dots,\langle e_n\rangle$.
	Finally, for~\ref{item:param-subalg-geomconj-max-tori}, compare \cite[Observation~5.2]{repstab} to \ref{item:param-subalg-geomconj-bij} (the maximal torus associated to the subalgebra~$A$ is its unit group), and compare \cite[Remark under Definition~5.1]{repstab} to \ref{item:param-subalg-geomconj-def}.
\end{proof}

Let $c_\infty^\diag(p,n)$ be the number of $n$-dimensional diagonalizable subalgebras of~$\Mat_n(\F_p)$.

\begin{proposition}
	\label{thm:count-diag-algebras}
	We have $c_\infty^\diag(p,n) = p^{n^2-n}$.
\end{proposition}

By \iref{lem:param-subalg-geomconj}{item:param-subalg-geomconj-max-tori}, the number $c_\infty^\diag(p,n)$ is also the number of maximal tori of $\GL_n$ which are $\sigma$-invariant.
By \cite[14.16]{steinberg} (or more directly by \cite[Theorem~5.8]{repstab}), that number is~$q^{n^2-n}$.
Although this proof has the benefit of generalizing to arbitrary linear algebraic groups defined over~$\F_p$, we also include a standalone elementary proof for $\GL_n$:

\begin{proof}
	Distinguishing between the possible permutations $\pi$, and using the fact that $\Sym_n$ acts freely on ordered tuples of pairwise distinct spaces, \Cref{lem:param-subalg-geomconj} immediately implies:
	\begin{equation}
		\label{cor:mpn-as-sum}
		c_\infty^\diag(p,n)
		= \frac1{n!} \sum_{\pi\in \Sym_n} |N(\pi)|
	\end{equation}
	where~$N(\pi)$ is the set of \textbf{ordered} tuples $(E_1,\dots,E_n)$ of one-dimensional subspaces of $\bar\F_p^n$ such that $E_1 \oplus \ldots \oplus E_n = \bar\F_p^n$ and $\sigma(E_i) = E_{\pi(i)}$ for all $i=1,\dots,n$.

	We are now going to show that, for any permutation $\pi\in \Sym_n$, we have
	\begin{equation}
		\label{eqn:size-npi}
		|N(\pi)| = \frac{|\GL_n(\F_p)|}{\prod_{C\textnormal{ cycle in }\pi}(p^{|C|}-1)}.
	\end{equation}
	For this, we show that $\GL_n(\F_p)$ acts transitively on $N(\pi)$, with stabilizers isomorphic to $\prod_C \F_{p^{|C|}}^\times$.
	\Cref{eqn:size-npi} will then immediately follow using the orbit-stabilizer theorem.
	Let $C_1,\dots,C_r$ be the cycles of~$\pi$.
	For any $(E_1,\dots,E_n)\in N(\pi)$ and any cycle $C_k$ of~$\pi$, consider the subspace $F_k := \bigoplus_{i\in C_k}E_i$.
	Since $\pi$ permutes the elements of the cycle $C_k$, this subspace $F_k$ is by definition of~$N(\pi)$ fixed by $\sigma$ and hence defined over~$\F_p$.
	Moreover, $\bigoplus_k F_k = \bigoplus_i E_i = \bar\F_p^n$.
	The group $\GL_n(\F_p)$ acts transitively on the set of tuples $(F_1,\dots,F_r)$ of subspaces of~$\F_p^n$ such that $F_1 \oplus \ldots \oplus F_r = \F_p^n$ and $\dim F_k = |C_k|$ for all $k$, and the stabilizers for that action are isomorphic to~$\prod_k \GL(F_k)$.
	It is hence sufficient to prove, for fixed subspaces $F'_1,\ldots,F'_r$ of $\mathbb F_p^n$, that the action of~$\prod_k \GL(F'_k \otimes \bar\F_p)$ on the set of tuples $(E_1,\ldots,E_n)$ of one-dimensional subspaces of $\bar\F_p^n$ such that $\sigma(E_i) = E_{\pi(i)}$ and $\bigoplus_{i \in C_k} E_i = F'_k \otimes \bar\F_p$ is transitive, with stabilizers isomorphic to $\prod_k \F_{p^{|C_k|}}^\times$.
	As that action is ``block-diagonal'', we can restrict our attention to a single cycle.
	We now assume that $\pi = (1, \ldots, n)$, and we will show that we then have a ($\GL_n(\F_p)$-equivariant) bijection
	\[
	f \colon 
	\bigl\{
		\textnormal{$\F_p$-basis $(a_1,\dots,a_n)$ of~$\F_{p^n}$}
	\bigr\} / \F_{p^n}^\times
	\stackrel\sim\longrightarrow
	N(\pi)
	\]
	sending $[(a_1,\dots,a_n)]$ to the tuple $(E_1,\dots,E_n)$ where $E_1 = \langle(a_1,\dots,a_n)\rangle$ and $E_i = \sigma^{i-1}(E_1)$ for $i=2,\dots,n$.
	Since the group $\GL_n(\F_p)$ acts simply transitively on the set of $\F_p$-bases of~$\F_{p^n}$, it will then indeed act transitively on $N(\pi)$ with stabilizer isomorphic to $\F_{p^n}^\times$.
	It remains to show that the map $f$ is well-defined and bijective.
	For any $(E_1,\dots,E_n)\in N(\pi)$, we have $E_i=\sigma^{i-1}(E_1)$ for $i=2,\dots,n$ and $\sigma^n(E_1) = E_1$, so $E_1$ must be generated by a vector with coordinates in $\F_{p^n}$.
	Moreover, if we define $E_1 = \langle(a_1,\dots,a_n)\rangle$ and $E_i = \sigma^{i-1}(E_1)$ for $i=2,\dots,n$, then $E_1,\dots,E_n$ span~$\bar\F_p^n$ if and only if the matrix $\bigl(\sigma^{i-1}(a_j)\bigr)_{i,j}$ is invertible, which is equivalent to $a_1,\dots,a_n$ forming an $\F_p$-basis of~$\F_{p^n}$.%
	\footnote{
		If $\bigl(\sigma^{i-1}(a_j)\bigr)_{i,j}$ is singular, then there is a non-trivial linear combination $\sum_j \lambda_j \sigma^{i-1}(a_j) = 0$ with coefficients in~$\F_{p^n}$ between its columns, which amounts to $\sum_j \sigma^i (\lambda_j) a_j = 0$ for all $i \in \{0, \dots, n-1\}$, so the vector $(a_1, \ldots, a_n) \in (\F_{p^n})^n$ is orthogonal to the subspace $\Span_i \Bigl( \sigma^i(\lambda_1, \ldots, \lambda_n) \Bigr) \subseteq (\F_{p^n})^n$; that subspace is $\sigma$-invariant, hence admits an $\F_p$-basis, in particular it contains a non-zero vector in $\F_p^n$, which implies that there is a non-trivial linear combination $\sum_j \mu_j a_j = 0$ with coefficients in $\F_p$.
		Conversely, if $a_1, \ldots, a_n$ are linearly dependent over $\F_p$, then up to the action of~$\GL_n(\F_p)$, we can assume that $a_n = 0$ and then $\bigl(\sigma^{i-1}(a_j)\bigr)_{i,j}$ is singular as its last column vanishes.
	}
	We have thus shown \Cref{eqn:size-npi}.
	
	Now, for any partition of~$n$ with $n_\ell$ parts of size $\ell$, there are exactly $n!/\prod_{\ell\geq1}\ell^{n_\ell} \, n_\ell!$ permutations with~$n_\ell$ cycles of length $\ell$ (the centralizer of any such permutation is isomorphic to $\prod_\ell (\ZZ/\ell \ZZ)^{n_\ell} \rtimes \Sym_{n_\ell}$).
	Hence, \Cref{cor:mpn-as-sum} and \Cref{eqn:size-npi} imply
	\[
		\frac{
			c_\infty^\diag(p,n)
		}{
			|\GL_n(\F_p)|
		}
		=
		\sum_{\substack{
			\textnormal{partition of } n \\
			\textnormal{with } n_\ell \textnormal{ parts of size } \ell
		}}
			\frac
			{1}
			{
				\prod_{\ell \geq 1}
					\ell^{n_\ell} \,
					n_\ell ! \,
					(p^\ell - 1)^{n_\ell}
			}.
	\]
	As sizes of parts of partitions of~$n$ are characterized by the property $\sum_\ell \ell n_\ell = n$ (where $n_\ell \geq 0$ for all~$\ell$, and $n_\ell = 0$ for almost all $\ell$), the right-hand side is the coefficient in front of~$X^n$ of the power series
	{\allowdisplaybreaks
	\begin{align*}
		&
		\sum_{\substack{n_1, n_2, \ldots \geq 0\\\textnormal{almost all }0}}
			\,\,
			\prod_{\ell \geq 1}
				\frac
				{X^{\ell n_\ell}}
				{
					\ell^{n_\ell} \,
					n_\ell ! \,
					(p^\ell - 1)^{n_\ell}
				}
		=
		\prod_{\ell \geq 1}
			\,\,
			\sum_{n \geq 0}
				\frac
				{X^{\ell n}}
				{
					\ell^n \,
					n ! \,
					(p^\ell - 1)^n
				}
		=
		\prod_{\ell \geq 1}
			\exp\leftl(
				\frac{X^\ell}{\ell(p^\ell - 1)}
			\rightr)
		=
		\prod_{\ell \geq 1}
			\exp\leftl(
				\frac{p^{-\ell} X^\ell}{\ell(1 - p^{-\ell})}
			\rightr)
		\\
		={}&
		\exp\Biggl(
			\sum_{\substack{
				\ell \geq 1 \\
				k \geq 0
			}}
				\frac{p^{-\ell} X^\ell}{\ell} p^{- \ell k}
		\Biggr)
		=
		\exp\Biggl(
			-
			\sum_{k \geq 0}
				\ln(1-p^{-(1+k)} X)
		\Biggr)
		=
		\prod_{k \geq 1}
			\frac{1}{1-p^{-k} X}
		=
		\prod_{k \geq 1}
			\sum_{i \geq 0}
					p^{-ki} X^i
		\\
		={}&
		\sum_{n \geq 0}
			\Biggl(
				\sum_{\substack{
					i_1, i_2, \ldots \geq 0\\
					i_1 + i_2 + \ldots = n
				}}
					p^{- \sum_{k \geq 1} k i_k}
			\Biggr)
			X^n
		=
		\sum_{n \geq 0}
			\,\,
			\sum_{s\geq n}
				\Biggl|\Biggl\{
					i_1, i_2, \ldots \geq 0
					\;\Bigg\vert\;
					\begin{matrix}
						i_1 + i_2 + \ldots = n \\
						\sum_{k \geq 1} k i_k = s
					\end{matrix}
				\Biggr\}\Biggr|
				\cdot
				p^{-s}
			X^n.
	\end{align*}
	On the other hand:
	\begin{align*}
		& \sum_{n \geq 0}
			\frac{p^{n^2-n}}{|\GL_n(\F_p)|}
			X^n
		=
		\sum_{n \geq 0}
			\frac{
				p^{\frac{n(n-1)}2}
			}{
				(p^n-1)\cdots(p-1)
			}
			X^n
		=
		\sum_{n \geq 0}
			\Biggl(
				\prod_{k=1}^n
					\frac{
						p^{k-1}
					}{
						p^k-1
					}
			\Biggr)
			X^n
		=
		\sum_{n \geq 0}
			\frac1{p^n}
			\Biggl(
				\prod_{k=1}^n
					\frac{
						1
					}{
						1-p^{-k}
					}
			\Biggr)
			X^n
		\\
		={}&
		\sum_{n \geq 0}
			\frac1{p^n}
			\Biggl(
				\prod_{k=1}^n
					\,
					\sum_{i \geq 0}
						p^{-k i}
			\Biggr)
			X^n
		=
		\sum_{n \geq 0}
			\frac1{p^n}
			\Biggl(
				\sum_{i_1, \ldots, i_n \geq 0}
					\,\,
					\prod_{k=1}^n
						p^{-k i_k}
			\Biggr)
			X^n
		=
		\sum_{n \geq 0}
			\Biggl(
				\sum_{i_1, \ldots, i_n \geq 0}
					\,\,
					p^{-\left(\sum_{k=1}^n k i_k+n\right)}
			\Biggr)
			X^n
		\\
		={}&
		\sum_{n \geq 0}
			\,\,
			\sum_{s \geq n}
				\,\,
				\Biggl|\Biggl\{
					i_1, \ldots, i_n \geq 0
					\;\Bigg\vert\;
					\sum_{k = 1}^n k i_k = s-n
				\Biggr\}\Biggr|
				\cdot
				p^{-s}
				X^n.
	\end{align*}
	}
	Therefore, proving that $c_\infty^\diag(p,n) = p^{n^2-n}$ reduces to proving the following equality for all $s \geq n$:
	\[
		\Biggl|\Biggl\{
			i_1, i_2, \ldots \geq 0
			\;\Bigg\vert\;
			\begin{matrix}
				i_1 + i_2 + \ldots = n \\
				\sum_{k \geq 1} k i_k = s
			\end{matrix}
		\Biggr\}\Biggr|
		=
		\Biggl|\Biggl\{
			i_1, \ldots, i_n \geq 0
			\;\Bigg\vert\;
			\sum_{k = 1}^n k i_k = s-n
		\Biggr\}\Biggr|.
	\]
	We can interpret a list $(i_1, i_2, \ldots)$ such that $i_1 + i_2 + \ldots = n$ and $\sum_{k\geq1} k i_k = s$ as a partition of~$s$ with exactly $n$ (non-zero) parts ($i_k$ is the number of parts of size $k$).
	Similarly, we can interpret a tuple $(i_1, \ldots, i_n)$ such that $\sum_{k=1}^n k i_k = s - n$ as a partition of~$s-n$ whose parts all have size $\leq n$.

	Consider a partition of~$s$ with exactly $n$ parts.
	Removing $1$ from each part turns this partition into a partition of~$s-n$ with at most $n$ parts.
	Then, taking conjugate partitions turns that partition into a partition of~$s-n$ whose parts all have sizes~$\leq n$.
	As both of these operations can be inverted, we have described a bijection between the two sets, proving the claim.
\end{proof}

\begin{theorem}
	\label{thm:infty-diag}
	For any finite field $\F_q \supseteq \F_p$, we have
	\[
		|\A_\infty^\diag(\F_q)|
		=
		p^{n^2-n} \cdot q^n
		+
		O_{p,n}(q^{n-1}).
	\]
\end{theorem}

\begin{proof}
	By \Cref{lem:param-subalg-geomconj} and \Cref{thm:count-diag-algebras} (or \cite[Theorem~5.8]{repstab}), there are exactly $c_\infty^\diag(p,n) = p^{n^2-n}$ diagonalizable subalgebras of~$\Mat_n(\F_p)$ of dimension~$n$ and none of larger dimension.
	The claim thus follows from \Cref{lemma:stable} by inclusion-exclusion.
	(For any $n$-dimensional subalgebra $A$ of $\Mat_n(\bar\F_p)$ defined over~$\F_p$, we have $|A\cap\Mat_n(\F_q)| = q^n$, and for any two such subalgebras $A_1\neq A_2$, we have $|A_1\cap A_2\cap\Mat_n(\F_q)| \leq q^{n-1}$.)
\end{proof}

\subsection{General matrices}
\label{subsn:general-stable}

Let $n\geq3$.
We recall the definition of the Gaussian binomial coefficient
\[
	\pbinom{n}{k} := \frac{(p^n-1)\cdots(p^{n-k+1}-1)}{(p^k-1)\cdots(p-1)},
\]
which is the number of $k$-dimensional subspaces of~$\F_p^n$.

\begin{theorem}
	\label{thm:count-comm-algebras}
	The maximal dimension of a commutative subalgebra of~$\Mat_n(\F_p)$ is $\lfloor n^2/4 \rfloor+1$, and the number $c_\infty(p,n)$ of commutative subalgebras of that dimension is given by:
	\[
		c_\infty(p,3) = p^6 + p^5 + 3 p^4 + 3 p^3 + 3 p^2 + p + 1,
	\]
	\[
		c_\infty(p,n) =
		\pbinom{n}{n/2}
		\textnormal{ if $n\geq4$ is even},
		\qquad\qquad
		c_\infty(p,n) =
		2\pbinom{n}{\lfloor n/2 \rfloor}
		\textnormal{ if $n\geq5$ is odd.}
	\]
\end{theorem}

\begin{proof}
	The commutative subalgebras of maximal dimension of~$\Mat_n(\bar\F_p)$ were classified in \cite{schur} (see also \cite{mirza}).
	In particular, they have dimension $\lfloor n^2/4 \rfloor+1$.

	We now explain how to parametrize them.
	For any subspace $V \subsetneq \bar\F_p^n$, let $C_V$ be the linear subspace of matrices $A\in\Mat_n(\bar\F_p)$ such that $\im A\subseteq V\subseteq\ker A$, and let $C_V'$ be the algebra $C_V + \bar\F_p I_n$.
	The product of any two elements of~$C_V$ is zero, so $C_V'$ is a commutative subalgebra of~$\Mat_n(\bar\F_p)$.
	Moreover, $V$ can be recovered as the union of all images of nilpotent elements of~$C'_V$, so the map $V \mapsto C_V'$ is injective.
	We have $\sigma(C'_V) = C'_{\sigma(V)}$, so the algebra $C'_V$ is defined over~$\F_p$ if and only if~$V$ is defined over~$\F_p$.
	By \cite[Satz~II and Satz~III]{schur}, when $n>3$, the commutative subalgebras of~$\Mat_n(\bar\F_p)$ of (maximal) dimension $\lfloor n^2/4 \rfloor+1$ are exactly those of the form~$C_V'$ with $\dim V=\lfloor n/2\rfloor$ or $\dim V=\lceil n/2\rceil$.
	So, for $n > 3$, there are as many $(\lfloor n^2/4\rfloor+1)$-dimensional commutative subalgebras defined over~$\F_p$ as there are choices for such a subspace $V$ defined over~$\F_p$, namely $\pbinom{n}{n/2}$ for even $n$ and $\pbinom{n}{\lfloor n/2\rfloor} + \pbinom{n}{\lceil n/2\rceil} = 2\pbinom{n}{\lfloor n/2\rfloor}$ for odd~$n$.
	This proves the result for $n>3$.

	We now compute $c_\infty(p,3)$.
	According to \cite[Satz~II, Satz~III and p.~76]{schur}, there are five conjugacy classes (up to $\GL_3(\bar\F_p)$-conjugation) of three-dimensional commutative subalgebras of~$\Mat_3(\bar\F_p)$.
	In the following table, we list one representative $A$ of each conjugacy class and the number $N(A)$ of subalgebras defined over~$\F_p$ in the corresponding conjugacy class (the computations of $N(A)$ are detailed below the table):
	\begin{center}
		\renewcommand{\arraystretch}{2}
		\vspace{-1.5ex}
		\begin{tabular}{ccc}
			&representative $A$&$N(A)$\\\hline
			(1)&$\left\{\left(\begin{smallmatrix}\alpha&\beta&\gamma\\&\alpha\\&&\alpha\end{smallmatrix}\right) \;\middle\vert\; \alpha,\beta,\gamma\in\bar\F_p\right\}$
			&$p^2+p+1$\\
			(2)&$\left\{\left(\begin{smallmatrix}\alpha&&\beta\\&\alpha&\gamma\\&&\alpha\end{smallmatrix}\right) \;\middle\vert\; \alpha,\beta,\gamma\in\bar\F_p\right\}$
			&$p^2+p+1$\\
			(3)&$\left\{\left(\begin{smallmatrix}\alpha\\&\beta\\&&\gamma\end{smallmatrix}\right) \;\middle\vert\; \alpha,\beta,\gamma\in\bar\F_p\right\}$
			&$p^6$\\
			(4)&$\left\{\left(\begin{smallmatrix}\alpha&\beta\\&\alpha\\&&\gamma\end{smallmatrix}\right) \;\middle\vert\; \alpha,\beta,\gamma\in\bar\F_p\right\}$
			&$p^2(p^2+p+1)(p+1)$\\
			(5)&$\left\{\left(\begin{smallmatrix}\alpha&\beta&\gamma\\&\alpha&\beta\\&&\alpha\end{smallmatrix}\right) \;\middle\vert\; \alpha,\beta,\gamma\in\bar\F_p\right\}$
			&$(p^2+p+1)(p+1)(p-1)$
		\end{tabular}
	\end{center}
	Cases (1) and (2) correspond to the conjugacy classes $\{C'_V\mid V\subseteq\bar\F_p^3\textnormal{ one-dimensional}\}$ and $\{C'_V\mid V\subseteq\bar\F_p^3\textnormal{ two-dimensional}\}$, respectively, each of which contains $\pbinom{3}{1}=\pbinom{3}{2}=p^2+p+1$ subalgebras defined over~$\F_p$ (see the arguments above for odd $n>3$).
	Case~(3) corresponds to the conjugacy class of diagonalizable subalgebras, which according to \Cref{thm:count-diag-algebras} contains $p^6$ subalgebras defined over~$\F_p$.
	In cases (4) and (5), the $\GL_3(\bar\F_p)$-stabilizers~$S$ of~$A$ with respect to conjugation are respectively
	\[
		\left\{\left(\begin{smallmatrix}a&b\\&c\\&&d\end{smallmatrix}\right) \;\middle\vert\; a,c,d\in\bar\F_p^\times,\ b\in\bar\F_p\right\}
		\quad\textnormal{and}\quad
		\left\{
			\left(\begin{smallmatrix}a&b&c\\&d&e\\&&f\end{smallmatrix}\right)
			\;\middle\vert\;
			a,d,f\in\bar\F_p^\times,\ 
			b,c,e\in\bar\F_p,\ 
			\textnormal{with } af=d^2
		\right\}.
	\]
	In both cases, we have $H^1(\mathrm{Gal}(\bar\F_p|\F_p), S) = \{1\}$,%
	\footnote{
		By \cite[Chap.~X, \S 1, Exercise~2]{serre-local-fields}, the unit group of any algebra defined over~$\F_p$ has trivial first Galois cohomology.
		This directly shows case~(4), and case~(5) follows by looking at the long exact sequence in cohomology arising from the short exact sequence $1\to S\to T^\times \to\bar\F_p^\times\to1$, where~$T$ is the algebra of upper triangular matrices with coefficients in~$\bar\F_p$, and the homomorphism on the right is $\bigl(\begin{smallmatrix}a&b&c\\ &d&e\\ &&f\end{smallmatrix}\bigr) \mapsto afd^{-2}$.
	}
	so any algebra which is $\GL_3(\bar\F_p)$-conjugate to~$A$ and defined over~$\F_p$ is actually $\GL_3(\F_p)$-conjugate to $A$.%
	\footnote{
		If the algebra $U^{-1}AU$ is defined over~$\F_p$ for some $U\in\GL_3(\bar\F_p)$, we obtain a $1$-cocycle $\tau\mapsto U\tau(U)^{-1}\in S$.
		It must be a $1$-coboundary $\tau\mapsto T\tau(T)^{-1}$ for some $T\in S$, so $U':= T^{-1}U$ lies in $\GL_3(\F_p)$, and then $U^{-1}AU = U'^{-1}AU'$.
	}
	The size of the $\GL_3(\F_p)$-conjugacy class is $|\GL_3(\F_p)|/|S\cap\GL_3(\F_p)|$, which is verified to be the number given in the table.
	Summing everything, we find that $c_\infty(p,3) = p^6 + p^5 + 3 p^4 + 3 p^3 + 3 p^2 + p + 1$.
\end{proof}

As in the proof of \Cref{thm:infty-diag}, we deduce from \Cref{lemma:stable} and \Cref{thm:count-comm-algebras} the following theorem:

\begin{theorem}
	\label{thm:infty-all}
	Let $c_\infty(p,n)$ be as in \Cref{thm:count-comm-algebras}.
	For any finite field $\F_q \supseteq \F_p$, we have
	\[
		|\A_\infty (\F_q)|
		=
		c_\infty(p,n) \cdot q^{\lfloor n^2/4 \rfloor+1}
		+ O_{p,n}\leftl(
			q^{\lfloor n^2/4 \rfloor}
		\rightr).
	\]
\end{theorem}

\small
\emergencystretch=1em
\bibliographystyle{alphaurl}
\bibliography{refs.bib}

\begin{thebibliography}{HHYZ24}

\bibitem[BGS14]{symnilp}
Andries~E. Brouwer, Rod Gow, and John Sheekey.
\newblock Counting symmetric nilpotent matrices.
\newblock {\em The Electronic Journal of Combinatorics}. 21(2). 2014.
\newblock DOI: \href {https://doi.org/10.37236/3810} {\path{10.37236/3810}}.

\bibitem[CEF14]{repstab}
Thomas Church, Jordan~S. Ellenberg, and Benson Farb.
\newblock Representation stability in cohomology and asymptotics for families
  of varieties over finite fields.
\newblock In {\em Algebraic topology: applications and new directions. Stanford
  symposium on algebraic topology: applications and new directions, Stanford
  University, Stanford, CA, USA, July 23--27, 2012. Proceedings}. pages 1--54.
  Providence, RI: American Mathematical Society (AMS). 2014.
\newblock ISBN: 978-0-8218-9474-3; 978-1-4704-1855-7.

\bibitem[FF60]{feitfine}
Walter Feit and Nathan~J. Fine.
\newblock Pairs of commuting matrices over a finite field.
\newblock {\em Duke Mathematical Journal}. 27, pp.~91--94. 1960.
\newblock DOI: \href {https://doi.org/10.1215/S0012-7094-60-02709-5}
  {\path{10.1215/S0012-7094-60-02709-5}}.

\bibitem[FH58]{finenilp}
Nathan~J. Fine and Israel~N. Herstein.
\newblock The probability that a matrix be nilpotent.
\newblock {\em Illinois Journal of Mathematics}. 2, pp.~499--504. 1958.

\bibitem[Gan53]{gantmakher}
Felix~R. Gantmacher.
\newblock {\em The {T}heory of {M}atrices. {V}ol. 1}.
\newblock AMS Chelsea Publishing, Providence, RI. 1953.
\newblock English translation (1959).
\newblock ISBN: 0-8218-1376-5.

\bibitem[Ger61a]{gersten2}
Murray Gerstenhaber.
\newblock On dominance and varieties of commuting matrices.
\newblock {\em Annals of Mathematics. Second Series}. 73, pp.~324--348. 1961.
\newblock DOI: \href {https://doi.org/10.2307/1970336}
  {\path{10.2307/1970336}}.

\bibitem[Ger61b]{gerstenilp}
Murray Gerstenhaber.
\newblock On the number of nilpotent matrices with coefficients in a finite
  field.
\newblock {\em Illinois Journal of Mathematics}. 5, pp.~330--333. 1961.

\bibitem[GS00]{commpair}
Robert~M. Guralnick and B.~A. Sethuraman.
\newblock Commuting pairs and triples of matrices and related varieties.
\newblock {\em Linear Algebra and its Applications}. 310(1-3), pp.~139--148.
  2000.
\newblock DOI: \href {https://doi.org/10.1016/S0024-3795(00)00065-3}
  {\path{10.1016/S0024-3795(00)00065-3}}.

\bibitem[GS25]{wildcount}
Fabian Gundlach and Béranger Seguin.
\newblock Counting two-step nilpotent wildly ramified extensions of function
  fields. 2025.
\newblock arXiv: \href {https://arxiv.org/abs/2502.18207} {\path{2502.18207}}.

\bibitem[Gur92]{guralnick}
Robert~M. Guralnick.
\newblock A note on commuting pairs of matrices.
\newblock {\em Linear and Multilinear Algebra}. 31(1-4), pp.~71--75. 1992.
\newblock DOI: \href {https://doi.org/10.1080/03081089208818123}
  {\path{10.1080/03081089208818123}}.

\bibitem[Har92]{harris-algebraic-geometry}
Joe Harris.
\newblock {\em Algebraic Geometry: A First Course}. volume 133 of {\em Graduate
  Texts in Mathematics}.
\newblock Springer-Verlag, New York. 1992.
\newblock DOI: \href {https://doi.org/10.1007/978-1-4757-2189-8}
  {\path{10.1007/978-1-4757-2189-8}}. ISBN: 0-387-97716-3.

\bibitem[HHYZ24]{hils2024langweiltypeestimatesfinite}
Martin Hils, Ehud Hrushovski, Jinhe Ye, and Tingxiang Zou.
\newblock Lang-{W}eil {T}ype {E}stimates in {F}inite {D}ifference {F}ields.
  2024.
\newblock arXiv: \href {https://arxiv.org/abs/2406.00880} {\path{2406.00880}}.

\bibitem[HMS13]{surveycommdiff}
Olga Holtz, Volker Mehrmann, and Hans Schneider.
\newblock Matrices that commute with their derivative. {On} a letter from
  {Schur} to {Wielandt}.
\newblock {\em Linear Algebra and its Applications}. 438(5), pp.~2574--2590.
  2013.
\newblock DOI: \href {https://doi.org/10.1016/j.laa.2012.10.010}
  {\path{10.1016/j.laa.2012.10.010}}.

\bibitem[Hua23]{mutanni}
Yifeng Huang.
\newblock Mutually annihilating matrices, and a {Cohen}-{Lenstra} series for
  the nodal singularity.
\newblock {\em Journal of Algebra}. 619, pp.~26--50. 2023.
\newblock DOI: \href {https://doi.org/10.1016/j.jalgebra.2022.11.021}
  {\path{10.1016/j.jalgebra.2022.11.021}}.

\bibitem[LW54]{lang-weil}
Serge Lang and Andr\'e Weil.
\newblock Number of points of varieties in finite fields.
\newblock {\em American Journal of Mathematics}. 76, pp.~819--827. 1954.
\newblock DOI: \href {https://doi.org/10.2307/2372655}
  {\path{10.2307/2372655}}.

\bibitem[Mir98]{mirza}
Maryam Mirzakhani.
\newblock A simple proof of a theorem of {Schur}.
\newblock {\em American Mathematical Monthly}. 105(3), pp.~260--262. 1998.
\newblock DOI: \href {https://doi.org/10.2307/2589084}
  {\path{10.2307/2589084}}.

\bibitem[MT55]{motzkintaussky}
Theodore~S. Motzkin and Olga Taussky.
\newblock Pairs of matrices with property {{\(L\)}}. {II}.
\newblock {\em Transactions of the American Mathematical Society}. 80,
  pp.~387--401. 1955.
\newblock DOI: \href {https://doi.org/10.2307/1992996}
  {\path{10.2307/1992996}}.

\bibitem[Ogu13]{eigcommdiff}
Arthur Ogus.
\newblock The eigenvalues of matrices which commute with their derivative.
\newblock {\em Linear Algebra and its Applications}. 438(12), pp.~4757--4759.
  2013.
\newblock DOI: \href {https://doi.org/10.1016/j.laa.2013.02.028}
  {\path{10.1016/j.laa.2013.02.028}}.

\bibitem[Pan08]{panyushev}
Dmitri~I. Panyushev.
\newblock Two results on centralisers of nilpotent elements.
\newblock {\em Journal of Pure and Applied Algebra}. 212(4), pp.~774--779.
  2008.
\newblock DOI: \href {https://doi.org/10.1016/j.jpaa.2007.07.003}
  {\path{10.1016/j.jpaa.2007.07.003}}.

\bibitem[Sch05]{schur}
Issai Schur.
\newblock Zur {Theorie} der vertauschbaren {Matrizen}.
\newblock {\em Journal für die reine und angewandte {Mathematik}}. 130,
  pp.~66--76. 1905.
\newblock URL: \url{https://eudml.org/doc/149219}.

\bibitem[Sch08]{charpol}
Eric Schmutz.
\newblock Splitting fields for characteristic polynomials of matrices with
  entries in a finite field.
\newblock {\em Finite Fields and their Applications}. 14(1), pp.~250--257.
  2008.
\newblock DOI: \href {https://doi.org/10.1016/j.ffa.2007.10.001}
  {\path{10.1016/j.ffa.2007.10.001}}.

\bibitem[Ser79]{serre-local-fields}
Jean-Pierre Serre.
\newblock {\em Local Fields}. volume~67 of {\em Graduate Texts in Mathematics}.
\newblock Springer-Verlag, New York-Berlin. 1979.
\newblock ISBN: 0-387-90424-7.

\bibitem[Stacks]{stacks-project}
The {Stacks} {Project}.
\newblock \url{https://stacks.math.columbia.edu}.

\bibitem[Ste68]{steinberg}
Robert Steinberg.
\newblock {\em Endomorphisms of linear algebraic groups}. volume~80 of {\em
  Memoirs of the American Mathematical Society}.
\newblock Providence, RI: American Mathematical Society (AMS). 1968.
\newblock DOI: \href {https://doi.org/10.1090/memo/0080}
  {\path{10.1090/memo/0080}}. ISBN: 978-0-8218-1280-8; 978-1-4704-0029-3.

\bibitem[SV22]{langweiltordu}
Kadattur~Vasudevan Shuddhodan and Yakov Varshavsky.
\newblock The {Hrushovski}-{Lang}-{Weil} estimates.
\newblock {\em Algebraic Geometry}. 9(6), pp.~651--687. 2022.
\newblock DOI: \href {https://doi.org/10.14231/AG-2022-020}
  {\path{10.14231/AG-2022-020}}.

\end{thebibliography}

\end{document}